\DeclareRobustCommand{\format@sec@number}[2]{{\normalfont\upshape#1}#2}
\def\e{\varepsilon}
\def\a{\alpha}
\def\b{\beta}
\def\d{\delta}
\def\l{\lambda}
\def\R{\mathbb R}
\def\N{{\mathbb N}}
\def\Z{\mathbb Z}
\def\T{\mathbb T}
\def\C{\mathbb C}
\def\({\biggl(}
\def\){\biggr)}
\def\<{\mathbf{\langle}}
\def\>{\mathbf{\rangle}}
\numberwithin{equation}{section}
\newtheorem{theorem}[equation]{Theorem}
\newtheorem{proposition}[equation]{Proposition}
\newtheorem{lemma}[equation]{Lemma}
\newtheorem{que}{Question}
\theoremstyle{definition}
\newtheorem{definition}[equation]{Definition}
\theoremstyle{definition}
\theoremstyle{remark}
\newtheorem{remark}[equation]{Remark}
\title{Real-analytic diffeomorphisms with homogeneous spectrum and disjointness of convolutions}
\author{Shilpak Banerjee and Philipp Kunde}
\date{}
\begin{document}

\maketitle

\begin{abstract}
On any torus $\T^d$, $d \geq 2$, we prove the existence of a real-analytic diffeomorphism $T$ with a good approximation of type $\left(h,h+1\right)$, a maximal spectral type disjoint with its convolutions and a homogeneous spectrum of multiplicity two for the Cartesian square $T\times T$. The proof is based on a real-analytic version of the Approximation by Conjugation-method.
\end{abstract}

%\tableofcontents

\section{Introduction}

One of the main problems in the spectral theory of dynamical systems at the interface of unitary operator theory and ergodic theory is the following question: 
\begin{que}
What are possible spectral properties for a Koopman operator associated with a measure-preserving transformation?
\end{que}
More specifically, one can search for transformations possessing specific essential values $\mathcal{M}_{U_T}$ of the spectral multiplicities: 
\begin{que} \label{que:multipl}
Given a subset $E \subset \mathbb{N} \cup \left\{ \infty \right\}$, is there an ergodic transformation $T$ such that $\mathcal{M}_{U_T}=E$?
\end{que}
These two problems are open and no restrictions (except for the obvious ones) are known. However, there is an impressive progress concerning Question \ref{que:multipl} (see \cite{Da} for a survey on spectral multiplicities of ergodic actions) and there exist two standard points of view: to consider the spectrum of $T$ (and in particular $\mathcal{M}_{U_T}$) either on $L^2\left(X, \mu\right)$ or on the orthogonal complement $L^2_0\left(X, \mu\right)$ of the constant functions. In \cite{KL} it was proved that all possible subsets of $\mathbb{N} \cup \left\{\infty\right\}$ can be realized as $\mathcal{M}_{U_T}$ for some ergodic transformation $T$ in the first case (since $1$ is always an eigenvalue because of the constant functions, ``possible'' means any subset of $\mathbb{N} \cup \left\{\infty\right\}$ with $1$ as an element). In the second case the Cartesian powers of a generic transformation provide a good opportunity for the construction of examples with the infimum of essential spectral multiplicities larger than $1$. Although, it seems very unlikely that these Cartesian powers have finite maximal spectral multiplicity, this is the generic case: Independently Ageev and Ryzhikov proved the celebrated result, that for a generic automorphism $T$ the Cartesian square $T \times T$ has homogeneous spectrum of multiplicity $2$ (see \cite{Ag} resp. \cite{Ry}). Ageev was even able to show that for the $n$-th power $T^n= T \times ... \times T$ of a generic transformation $T$ it holds $M\left(T^n\right)= \left\{n, n \cdot \left(n-1\right), ..., n!\right\}$ (cf. \cite[Theorem 2]{Ag}). He also proved for every $n \in \mathbb{N}$ the existence of an ergodic transformation with homogeneous spectrum of multiplicity $n$ in the orthogonal complement of the constant functions (\cite[Theorem 1]{Ag2}) solving Rokhlin's problem on homogeneous spectrum in ergodic theory. \\

Another important question in Ergodic Theory asks 
\begin{que} \label{que:smooth}
Are there smooth versions to the objects and concepts of abstract ergodic theory?
\end{que}
With regard to the \textit{spectral multiplicity problem} in Question \ref{que:multipl} there are only few results in this direction. Explicitly, Danilenko asks which subsets $E \neq \left\{ 1 \right\}$ admit a smooth ergodic transformation $T$ with $\mathcal{M}_{U_T}=E$ (\cite[section 10]{Da}). Blanchard and Lemanczyk showed that every set $E$ containing $1$ as well as $\text{lcm}(e_1,e_2)$ for $e_1,e_2 \in E$ is realizable as the set of essential spectral multiplicities for a Lebesgue measure-preserving analytic diffeomorphism of a finite dimensional torus (\cite{BL}). Recently, the second author proved that on any smooth compact connected manifold $M$ of dimension $d \geq 2$ admitting a non-trivial circle action $\mathcal{S} = \left\{\phi^t\right\}_{t \in \mathbb{S}^1}$ the set of $C^{\infty}$-diffeomorphisms $T$, that have a homogeneous spectrum of multiplicity $2$ for $T\times T$ and a maximal spectral type disjoint with its convolutions, is residual (i.e. it contains a dense $G_{\delta}$-set) in $\mathcal{A}_{\alpha}\left(M\right)= \overline{\left\{h \circ \phi^{\alpha} \circ h^{-1} \ : h \in \text{Diff}^{\infty}\left(M, \mu \right)\right\}}^{C^{\infty}}$ for every Liouvillean number $\alpha$ (\cite{Ku-Dc}). Hereby, Problem 7.11. in \cite{FK} was answered affirmatively. We are able to prove an analogous result in the real-analytic category,
\begin{theorem} \label{main}
For any $\rho>0$ and $d \geq 2$, there exist real-analytic diffeomorphisms $T\in \text{Diff }_\rho^\omega (\T^d, \mu)$ that have a maximal spectral type disjoint
with its convolutions, a homogeneous spectrum of multiplicity 2 for $T \times T$ and admit a good approximation of type $(h, h + 1)$.
\end{theorem}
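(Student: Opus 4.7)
The plan is to adapt the real-analytic Approximation by Conjugation (AbC) method of Anosov--Katok, feeding in the combinatorial scheme from \cite{Ku-Dc} which in the $C^\infty$ category produces the disjointness-of-convolutions and homogeneous-multiplicity-two properties. Specifically, I would construct $T$ as a limit
\[
T = \lim_{n\to\infty} T_n, \qquad T_n = H_n \circ R_{\alpha_{n+1}} \circ H_n^{-1},
\]
where $R_\alpha$ denotes rotation by $\alpha$ in the first coordinate of $\T^d$, $\alpha_{n+1} = p_{n+1}/q_{n+1}$ is a rational very close to $\alpha_n$, $H_n = h_1 \circ \cdots \circ h_n$, and each $h_{n+1} \in \text{Diff}_\rho^\omega(\T^d,\mu)$ is chosen so as to commute with $R_{\alpha_n}$. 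A sufficiently Liouvillean choice of $\alpha_{n+1}$ relative to the analytic norm of $H_{n+1}$ on a strip slightly wider than $\rho$ forces Cauchy convergence of $(T_n)$ in $\text{Diff}_\rho^\omega(\T^d, \mu)$.

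The combinatorial design of the $h_{n+1}$ is the next step. Using the equivariance $h_{n+1} \circ R_{p_n/q_n} = R_{p_n/q_n} \circ h_{n+1}$, each $h_{n+1}$ is determined by a \emph{block map} on one fundamental domain of $R_{p_n/q_n}$. On that block I would arrange a Chacon-type cutting-and-stacking with a single spacer column, so that the resulting $T_n$ admits an approximating Rokhlin partition consisting of towers of two heights $h_n$ and $h_n+1$; passing to the limit then yields the good approximation of type $(h,h+1)$. The same construction, read at the level of Koopman operators, forces the sequence $U_{T_n}^{q_n}$ to accumulate on a prescribed cyclic subalgebra of Markov operators which, through the Ageev--Ryzhikov mechanism, produces both a homogeneous spectrum of multiplicity $2$ for $T\times T$ and the disjointness $\sigma_T \perp \sigma_T \ast \sigma_T$. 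These spectral verifications are essentially the ones carried out in \cite{Ku-Dc} in the smooth category and transfer once the limit $T$ is built.

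The main obstacle, and what separates the analytic construction from its smooth counterpart, is keeping each $h_{n+1}$ (hence the full composition $H_{n+1}$) real-analytic on a strip of width $\rho$ while the combinatorics forces $h_{n+1}$ to oscillate at the fine scale $1/q_n$. In the $C^\infty$ case one may truncate and smooth freely; in the real-analytic setting every such modification enlarges the analytic norm, and one must control the interplay between the wavelength $1/q_n$ of the block map, the amount by which the strip has to shrink between stages, and the size of the perturbation $R_{\alpha_{n+1}} - R_{\alpha_n}$. I would handle this by realising the block map as an explicit trigonometric polynomial of Fourier degree proportional to a fixed power of $q_n$, in the spirit of the real-analytic AbC techniques of Fayad--Katok and their subsequent refinements, and then choosing $q_{n+1}$ astronomically large in terms of the analytic norm of $H_{n+1}$ so that the perturbation introduced at the next stage fits inside the remaining margin of the strip. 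Once convergence is secured, the three conclusions follow by reading the $(h,h+1)$-tower structure off the construction and applying the partial-rigidity/joining argument along the subsequence $(q_n)$ to extract the two spectral statements.
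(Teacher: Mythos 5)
Your overall strategy coincides with the paper's: a real-analytic AbC scheme $T_n=H_n^{-1}\circ\phi^{\alpha_{n+1}}\circ H_n$ on $\T^d$, conjugations commuting with $\phi^{\alpha_n}$, a two-tower combinatorics with heights differing by one, and an import of the spectral conclusions from the smooth construction. On the spectral side, be precise about what must be verified: the criterion actually used (Proposition \ref{prop:red}, taken from \cite{Ku-Dc}) requires \emph{both} a good cyclic approximation (giving simple spectrum via Lemma \ref{lem:simp} and ergodicity via Lemma \ref{lem:erg} --- simple spectrum is a hypothesis of Ryzhikov's Lemma \ref{lem:ryz}, so it cannot be skipped) \emph{and} a good linked approximation of type $(h,h+1)$ (giving $U_T^{h+1}\rightarrow_w rU_T+(1-r)\mathrm{Id}$ and, combined with weak mixing, $\kappa$-weak mixing, hence disjointness of convolutions by Stepin's theorem). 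Your ``Ageev--Ryzhikov mechanism along $U_{T_n}^{q_n}$'' gestures at this but omits the cyclic approximation entirely, and ``Chacon-type cutting-and-stacking with a single spacer'' does not literally make sense for $T_n$, all of whose orbits are periodic of period $q_{n+1}$: the two towers must be produced by choosing base sets inside a fundamental domain and exploiting arithmetic of the form $m_n\alpha_{n+1}=\frac{r_n}{q_n}+\frac{1}{2q_n^2}$, which is what the explicit combinatorics of $\mathfrak{h}_{l,p,q,r}$ accomplishes.

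The genuine gap is in your treatment of analyticity, which is the whole point of the real-analytic case. You propose to keep each $h_{n+1}$ analytic on ``a strip slightly wider than $\rho$'' and to control ``the amount by which the strip has to shrink between stages'' by choosing $q_{n+1}$ astronomically large afterwards. This is exactly the naive scheme that fails (cf.\ \cite[section 7.1]{FK} and the discussion in the introduction): the loss of analyticity width at stage $n$ is already incurred before $q_{n+1}$ is chosen, so no subsequent choice of parameters can restore it, and a limit of maps analytic on strips shrinking to width $0$ need not lie in $\mathrm{Diff}^\omega_\rho(\T^d,\mu)$. The construction instead requires conjugation maps whose lifts differ from the identity by functions \emph{entire} on $\C^d$ (the class $\mathrm{Diff}^\omega_\infty(\T^d,\mu)$), so that no domain is ever lost and convergence is obtained solely by making $|\alpha_{n+2}-\alpha_{n+1}|$ small relative to $\|DH_{n+1}\|$ on a wide strip. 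Moreover, you must control not just $h_{n+1}$ but also $h_{n+1}^{-1}$ (and hence $H_{n+1}^{-1}$) on large complex domains: a volume-preserving perturbation whose lift is an arbitrary trigonometric polynomial of degree comparable to a power of $q_n$ has no reason to admit an analytic, norm-controlled inverse, nor is a truncation of a block map automatically a $\mu$-preserving diffeomorphism. The paper resolves precisely this by using block-slide type maps, i.e.\ compositions of shears $x_i\mapsto x_i+s(x_j)$, which are explicitly invertible and automatically volume-preserving, and by approximating the step functions $s$ by entire periodic functions (Lemma \ref{lemma approx}, Proposition \ref{proposition approximation}); your sketch does not address invertibility, volume preservation, or the quantitative ``good domain'' estimates needed to make the approximation speeds $o(1/m_n)$ and $o(1/q_{n+1})$, so as it stands the analytic heart of the argument is missing.
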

The first part of this Theorem is linked to a conjecture of Kolmogorov respectively Rokhlin and Fomin (after verifying that the property held for all dynamical systems known at that time, especially large classes of systems of probabilistic origin like Gaussian ones), namely that every ergodic transformation possesses the so-called group property, i.e. the maximal spectral type $\sigma$ is symmetric and dominates its square $\sigma \ast \sigma$. This conjecture is an analogue of the well-known group property of the set of eigenvalues of an ergodic automorphism and was proven to be false. Indeed, in \cite{St1} A.M. Stepin gave the first example of a dynamical system without the group property. V.I. Oseledets constructed an analogous example with continuous spectrum (\cite{Os}). Later Stepin showed that for a generic transformation all convolutions $\sigma^k_0$, $k \in \mathbb{N}$, of the maximal spectral type $\sigma_0$ on $L^2_0\left(X, \mu\right)$ are mutually singular (see \cite{St}). \\

In general, one of the most powerful tools for finding answers to Question \ref{que:smooth} and for constructing volume preserving $C^{\infty}$-diffeomorphisms with prescribed ergodic or topological properties on any compact connected manifold $M$ of dimension $d\geq 2$ admitting a non-trivial circle action $\mathcal{S} = \left\{\phi^t\right\}_{t \in \mathbb{S}^1}$ is the so called \textit{Approximation by Conjugation}-method developed by D.V. Anosov and A. Katok in their fundamental paper \cite{AK}. These diffeomorphisms are constructed as limits of conjugates $T_n = H^{-1}_n \circ \phi^{\alpha_{n+1}} \circ H_n$, where $\alpha_{n+1} = \frac{p_{n+1}}{q_{n+1}}= \alpha_{n} + \frac{1}{k_{n} \cdot l_{n} \cdot q^2_{n}} \in \mathbb{Q}$, $H_n = h_n \circ H_{n-1}$ and $h_n$ is a measure-preserving diffeomorphism satisfying $\phi^{\alpha_{n}} \circ h_n = h_n \circ \phi^{\alpha_{n}}$. In each step the conjugation map $h_n$ and the parameter $l_{n}$ are chosen such that the diffeomorphism $T_n$ imitates the desired property with a certain precision. Then the parameter $k_n$ is chosen large enough to guarantee closeness of $T_{n}$ to $T_{n-1}$ in the $C^{\infty}$-topology and so the convergence of the sequence $\left(T_n\right)_{n \in \mathbb{N}}$ to a limit diffeomorphism is provided. This method enables the construction of smooth diffeomorphisms with specific ergodic properties (e.\,g. weak mixing ones in \cite[section 5]{AK}) or non-standard smooth realizations of measure-preserving systems (e.\,g. \cite[section 6]{AK} and \cite{FSW}). See also the very interesting survey article \cite{FK} for more details and other results of this method. 

Unfortunately, there are great challenging differences in the real-analytic category as discussed in \cite[section 7.1]{FK}: Since maps with very large derivatives in the real domain or its inverses are expected to have singularities in a small complex neighbourhood, for a real analytic family $S_t$, $0 \leq t \leq t_0$, $S_0 = \text{id}$, the family $h^{-1} \circ S_t \circ h$ is expected to have singularities very close to the real domain for any $t>0$. So, the domain of analycity for maps of our form $f_n = H^{-1}_n \circ \phi^{\alpha_{n+1}} \circ H_n$ will shrink at any step of the construction and the limit diffeomorphism will not be analytic. Thus, it is necessary to find conjugation maps of a special form which may be inverted more or less explicitly in such a way that one can guarantee analycity of the map and its inverse in a large complex domain.

Recently, some progress has been made in applying the AbC (short for approximation by conjugation)-method in the real-analytic category on particular manifolds. Fayad and Katok constructed volume-preserving uniquely ergodic real-analytic diffeomorphisms on odd-dimensional spheres in \cite{FK-ue}. In case of the torus $\mathbb{T}^d$, $d \geq 2$, the authors were able to reproduce several examples of smooth dynamical systems obtained by the AbC-scheme in the category of real-analytic diffeomorphisms in a series of papers (\cite{Ba-Ns}, \cite{Ku-Wm}, \cite{Ba-Ku}, \cite{Ba-Sr}). In particular, we construct minimal but not uniquely ergodic diffeomorphisms and nonstandard real-analytic realizations of toral translations. All these constructions base on the concept of \emph{block-slide type maps} on the torus and their sufficiently precise approximation by measure preserving real-analytic diffeomorphisms. This approach is the important mechanism in the constructions of this paper as well. We emphasize that all constructions in this article are done on the torus and that real-analytic AbC constructions on arbitrary real-analytic manifolds continue to remain an intractable problem.

\subsection*{Outline of the paper}
For a start we present several important definitions and concepts that will be used in this paper. In particular, we introduce the topology of real-analytic diffeomorphisms on the torus, the general AbC-scheme as well as block-slide type maps and their analytic approximations. In section \ref{section:theory} we give an overview of periodic approximation in ergodic theory. We have also included a survey section about spectral theory in Dynamical Systems. With the aid of this theoretical background we get a criterion for the proof of the main theorem in Proposition \ref{prop:red}. In fact, this Proposition reduces the proof to the construction of a diffeomorphism admitting a good cyclic approximation and a good linked approximation of type $(h,h+1)$. At this point, we will also sketch how to construct such a map. In particular, we describe the underlying combinatorics. In the rest of the paper, we present its construction via the concept of block-slide type maps and with explicitly defined tower elements in detail.

\section{Preliminaries}

Here we introduce the basic concepts and establish notations that we will use for the rest of this article. 

For a natural number $d$, we will denote the $d$ dimensional torus by $\T^d:=\R^d/\Z^d$. The standard Lebesgue measure on $\T^d$ will be denoted by  $\mu$. We define $\phi$, a measure preserving $\T^1$ action on the torus $\T^d$ as follows:
\begin{align}
\phi^t(x_1,\ldots, x_d)=(x_1+t,x_2,\ldots, x_d).
\end{align}

\subsection{The topology of real-analytic diffeomorphisms on the torus}

We give a description of the space of diffeomorphisms that are interesting to us.
Any real-analytic diffeomorphism on $\T^d$ homotopic to the identity admits a lift to a map from $\R^d$ to $\R^d$ and has the following form 
\begin{align} 
F(x_1,\ldots , x_d)=(x_1+f_1(x_1,\ldots, x_d),\ldots,x_d+f_d(x_1,\dots,x_d)),
\end{align}
where $f_i:\R^d\to \R$ are $\Z^d$-periodic real-analytic functions. Any real-analytic $\Z^d$-periodic function defined on $\R^d$ can be extended to some complex neighbourhood \footnote{we identify $\R^d$ inside $\C^d$ via the natural inclusion $(x_1,\ldots , x_d)\mapsto (x_1+i0,\ldots ,x_d+i0)$.} of $\R^d$  as a holomorphic (complex analytic) function. For a fixed $\rho>0$, let
\begin{align}
\Omega_\rho:=\{(z_1,\ldots,z_d)\in\C^d:|\text{Im}(z_1)|<\rho ,\ldots, |\text{Im}(z_d)|<\rho\}
\end{align}
 and for a function $f$ defined on this set, put 
 \begin{align}
 \|f\|_\rho:=\sup_{(z_1,\ldots, z_d)\in\Omega_\rho}|f(z_1,\ldots, z_d)|.
 \end{align}
 We define  $C^\omega_\rho(\T^d)$ to be the space of all $\Z^d$-periodic real-analytic functions on $\R^d$ that extends to a holomorphic function on $\Omega_\rho$ and $\|f\|_\rho<\infty$.

We define, $\text{Diff }^\omega_\rho(\T^d,\mu)$ to be the set of all measure-preserving real-analytic diffeomorphisms of $\T^d$ homotopic to the identity, whose lift $F(x)=(x_1+f_1(x),\ldots,x_d+f_d(x))$ to $\R^d$ satisfies $f_i\in C^\omega_\rho(\T^d)$ and we also require the lift $\tilde{F}(x)=(x_1+\tilde{f}_1(x),\ldots,x_d+\tilde{f}_d(x))$ of its inverse to $\R^d$ to satisfies $\tilde{f}_i\in C^\omega_\rho(\T^d)$.
The metric $d$ in $\text{Diff }^\omega_\rho(\T^d,\mu)$ is defined by 
\begin{align*}
d_\rho(f,g)=\max\{\tilde{d}_\rho(f,g),\tilde{d}_\rho(f^{-1},g^{-1})\}, \qquad\text{where}\qquad \tilde{d}_\rho(f,g)=\max_{i=1,\ldots, d}\{\inf_{n\in\Z}\|f_i-g_i+n\|_\rho\}.
\end{align*}
Let $F=(F_1,\ldots, F_d)$ be the lift of a diffeomorphism in $\text{Diff }^\omega_\rho(\T^d,\mu)$, we define the norm of the total derivative
\begin{align*}
\|DF\|_\rho:=\max_{\substack{i=1,\ldots, d\\j=1,\ldots, d}}\Big\|\frac{\partial F_i}{\partial x_j}\Big\|_\rho.
\end{align*}

Next, with some abuse of notation, we define the following two spaces 
\begin{align}
C^\omega_\infty (\T^d)  := & \cap_{n=1}^\infty C^\omega_n(\T^d), \label{6.789} \\
\text{Diff }^\omega_\infty (\T^d,\mu)  :=  &  \cap_{n=1}^\infty \text{Diff }^\omega_n(\T^d,\mu). \label{4.569}
\end{align}
Note that the functions in \ref{6.789} can be extended to $\C^d$ as entire functions. We also note that $\text{Diff }^\omega_\infty (\T^d,\mu)$ is closed under composition. To see this, let $f,g\in \text{Diff }^\omega_\infty (\T^d,\mu)$ and $F$ and $G$ be their corresponding lifts. Then note that $F\circ G$ is the lift of $f\circ g$ (with $\pi:\R^2\to\T^2$ as the natural projection, $\pi\circ F\circ G=f\circ\pi\circ G=f\circ g\circ \pi$). Now for the complexification of $F$ and $G$ note that the composition $F\circ G(z)=(z_1+g_1(z)+f_1(G(z)),\ldots, z_d+g_d(z)+f_d(G(z)) )$. Since $g_i\in C^\omega_\infty (\T^d) $, we have for any $\rho,$ $\sup_{z\in\Omega_\rho}|\text{Im}(G(z))|\leq \max_i (\sup_{z\in\Omega_\rho}|\text{Im}(z_i)+\text{Im}(g_i(z))|) \leq \max_i (\sup_{z\in\Omega_\rho}|\text{Im}(z_i)|+\sup_{z\in\Omega_\rho}|\text{Im}(g_i(z))|)\leq \rho + \max_i (\sup_{z\in\Omega_\rho}|g_i(z)|)<\rho + const<\rho'<\infty$ for some $\rho'$. So, $\sup_{z\in\Omega_\rho}|z_i+g_i(z)+f_i(G(z))|\leq |z_i|+|g_i(z)|+|f_i(G(z))|<\infty$ since $z\in\Omega_\rho, g_i\in C^\omega_\infty (\T^d)$ and $G(z)\in \Omega_{\rho'}$. An identical treatment gives the result for the inverse.

All intermediate diffeomorphisms constructed during the AbC method in this paper will belong to this category. \footnote{We note that the existence of such real-analytic functions whose complexification is entire or as in this case, the complexification of their lift is entire is central to a real-analytic AbC method. As of now we only know how to construct such functions on the torus, odd dimensional spheres and certain homogeneous spaces. }

This completes the description of the real-analytic topology necessary for our construction. Also throughout this paper, the word ``diffeomorphism'' will refer to a real-analytic diffeomorphism. Also, the word ``real-analytic topology'' will refer to the topology of $\text{Diff }^\omega_\rho(\T^d,\mu)$ described above. See \cite{S} for a more extensive treatment of these spaces.

\subsection{Some partitions of the torus}

First of all, we introduce the notion of a \emph{partial partition} of a measure space $\left(X,\mu\right)$, which is a pairwise disjoint countable collection of measurable subsets of $X$.
\begin{definition}
\begin{itemize}
	\item A sequence of partial partitions $\nu_n$ \emph{converges to the decomposition into points} if and only if for a given measurable set $A$ and for every $n \in \mathbb{N}$ there exists a measurable set $A_n$, which is a union of elements of $\nu_n$, such that $\lim_{n \rightarrow \infty} \mu \left( A \triangle A_n \right) = 0$. We often denote this by $\nu_n \rightarrow \varepsilon$.
	\item A partial partition $\nu$ is a \emph{refinement} of a partial partition $\eta$ if and only if for every $C \in \nu$ there exists a set $D \in \eta$ such that $C\subseteq D$. We write this as $\eta \leq \nu$.
\end{itemize}
\end{definition}
Using the notion of a partition we can introduce the weak topology in the space of measure-preserving transformations on a Lebesgue space:
\begin{definition}
\begin{enumerate}
	\item For two measure-preserving transformations $T,S$ and for a finite partition $\xi$ the \emph{weak distance} with respect to $\xi$ is defined by $d\left(\xi, T, S\right) \coloneqq \sum_{c \in \xi} \mu\left(T\left(c\right) \triangle S\left(c\right)\right)$.
	\item The base of neighbourhoods of $T$ in the \emph{weak topology} consists of the sets 
	\begin{equation*}
	W\left(T, \xi, \varepsilon\right) = \left\{S\;:\;d\left(\xi,T,S\right)< \varepsilon\right\},
	\end{equation*}
	where $\xi$ is a finite partition and $\varepsilon$ is a positive number.
\end{enumerate}
\end{definition}

There are some partitions of $\T^d$ that are of special interest to us. They appear repeatedly in this article and we summarize them here.

Assume that we are given three natural numbers $l,k,q$ and a function $a:\{0,1,\ldots,k-1\}\to\{0,1,\ldots,q-1\}$. We define the following three partitions of $\T^d$:
\begin{align}
& \mathcal{T}_{q}:=\Big\{\Delta_{i,q}:=\big[\frac{i}{q},\frac{i+1}{q}\big)\times \T^{d-1}: i = 0,1,\ldots,q-1\Big\},\label{partition T}\\
& \mathcal{G}_{l,q}:=\Big\{\big[\frac{i_1}{lq},\frac{i_1+1}{lq}\big)\times\big[\frac{i_2}{l},\frac{i_2+1}{l}\big)\times\ldots\times\big[\frac{i_d}{l},\frac{i_d+1}{l}\big):i_1 = 0,1,\ldots,lq-1,\nonumber\\
&\qquad\qquad\qquad\qquad\qquad\qquad\qquad\qquad\qquad\qquad (i_2,\ldots,i_{d}) \in \{0,1,\ldots,l-1\}^{d-1}\Big\},\label{partition G}\\
& \mathcal{G}_{j,l,q}:=\Big\{\big[\frac{i_1}{l^{d+1-j}q},\frac{i_1+1}{l^{d+1-j}q}\big)\times\big[\frac{i_2}{l},\frac{i_2+1}{l}\big)\times\ldots\times\big[\frac{i_{j}}{l},\frac{i_{j}+1}{l}\big)\times\T^{d-j}:i_1 = 0,1,\ldots,l^{d+1-j}q-1,\nonumber\\
&\qquad\qquad\qquad\qquad\qquad\qquad\qquad\qquad\qquad\qquad (i_2,\ldots,i_{d-j+1}) \in \{0,1,\ldots,l-1\}^{j-1}\Big\},\label{partition Gj}\\
&\mathcal{R}_{a,k,q}:=\Big\{R_{j,q}:=\phi^{j/q}\Big(\bigcup_{i=0}^{k-1}\Delta_{a(i)k+i,kq}\Big), j=0,\ldots,q-1\Big\}. \label{partition R}
\end{align} 
We note $\phi^\a$ acts on the partitions \ref{partition T}, \ref{partition G}, \ref{partition Gj} and \ref{partition R} as a permutation for any choice of $p$ when $\a=p/q$.

\subsection{Block-slide type maps and their real-analytic approximations}

We recall that a \emph{step function} on the unit interval is a finite linear combination of indicator functions on intervals. We define for $1\leq i,j\leq d$ and $i\neq j$, the following piecewise continuous map on the $d$ dimensional torus,
\begin{align}
\mathfrak{h}:\T^d\to\T^d\text{ defined by }\mathfrak{h}(x_1,\ldots,x_d):=(x_1,\ldots,x_{i-1}, x_i + s(x_j)\mod 1,x_{i+1},\ldots, x_d),
\end{align}
where $s$ is a step function on the unit interval. We refer to any finite composition of maps of the above kind as a \emph{block-slide type of map} on the torus.

Inspired by \cite{BK} the purpose of the section is to demonstrate that a block-slide type of map can be approximated extremely well by measure-preserving real-analytic diffeomorphisms outside a set of arbitrarily small measure. This can be achieved because step function can be approximated well by real-analytic functions whose complexification is entire.

\begin{lemma} \label{lemma approx}
Let $k$ and $N$ be two positive integer and $\b=(\b_0,\ldots,\b_{k-1})\in [0,1)^k$. Consider a step function of the form 
\begin{align*}
\tilde{s}_{\b,N}:[0,1)\to \R\quad\text{ defined by}\quad \tilde{s}_{\b,N}(x)=\sum_{i=0}^{kN-1}\tilde{\b}_i\chi_{[\frac{i}{kN},\frac{i+1}{kN})}(x).
\end{align*}
Here $\tilde{\b}_i:=\b_j$ where $j:=i\mod k$. Then, given any $\e>0$ and $\d>0$, there exists a periodic real-analytic function $s_{\b,N}:\R\to\R$ satisfying the following properties:
\begin{enumerate}
\item Entirety: The complexification of $s_{\b,N}$ extends holomorphically to $\C$.
\item Proximity criterion: $s_{\b,N}$ is $L^1$-close to $\tilde{s}_{\b,N}$. We can say more,
\begin{align}\label{nearness}
\sup_{x\in[0,1)\setminus F}|s_{\b,N}(x)-\tilde{s}_{\b,N}(x)|<\e.
\end{align}
\item Periodicity: $s_{\b,N}$ is $1/N$-periodic. More precisely, the complexification will satisfy,
\begin{align}\label{boundedness} 
s_{\b,N}(z+n/N)=s_{\b,N}(z)\qquad\forall\; z\in\C\text{ and }n\in\Z.
\end{align}
%\item Bounded derivative: The derivative is small outside a set of small measure,
%\begin{align} \label{derivative bound}
%\sup_{x\in[0,1)\setminus F}|s_{\b,N}'(x)|<\e 
%\end{align}
\end{enumerate}
Where $F=\cup_{i=0}^{kN-1}I_i\subset [0,1)$ is a union of intervals centred around $\frac{i}{kN},\;i=1,\ldots, kN-1$ and $I_0=[0,\frac{\d}{2kN}]\cup[1-\frac{\d}{2kN},1)$ and $\l(I_i)=\frac{\d}{kN}\;\forall\; i$. 
\end{lemma}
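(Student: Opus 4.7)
The plan is to construct $s_{\b,N}$ as a trigonometric polynomial in the variable $Nx$. Since $\tilde{s}_{\b,N}$ is $1/N$-periodic (because $\tilde\beta_i$ only depends on $i\bmod k$, so the pattern of values repeats $N$ times across $[0,1)$), it has a Fourier expansion of the form $\sum_{n \in \Z} c_n e^{2\pi i n N x}$, and any finite truncation
\[
s_{\b,N}(x) \;=\; \sum_{|n| \leq M} \left(1 - \tfrac{|n|}{M+1}\right) c_n\, e^{2\pi i n N x}
\]
(the Fejér/Cesàro mean of order $M$) is automatically a trigonometric polynomial. As a finite sum of exponentials $e^{2\pi i n N z}$, its complexification is entire on $\C$, settling property (1); and because every term has period $1/N$, property (3) is immediate.

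The nontrivial task is to verify the proximity estimate (2). Here I would invoke Fejér's theorem in the following form: if $\tilde{s}_{\b,N}$ is a bounded measurable $1/N$-periodic function and $K$ is a compact set on which $\tilde{s}_{\b,N}$ is continuous, then the Fejér sums converge uniformly to $\tilde{s}_{\b,N}$ on $K$. Every point of $[0,1)\setminus F$ lies at distance at least $\delta/(2kN)$ from the discontinuity set $\{i/(kN): i=0,\dots,kN-1\}$, and $\tilde{s}_{\b,N}$ is locally constant on each of the complementary intervals. Therefore Fejér's theorem applies uniformly on $[0,1)\setminus F$, and choosing $M=M(\e,\delta,k,N)$ large enough yields $\sup_{x\in[0,1)\setminus F}|s_{\b,N}(x)-\tilde{s}_{\b,N}(x)|<\e$. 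If one wants an explicit $M$, one can use a quantitative Jackson-type bound via the modulus of continuity of $\tilde s_{\b,N}$ on $[0,1)\setminus F$, but Fejér convergence alone suffices for the qualitative statement. To guarantee $s_{\b,N}$ is real-valued, pair conjugate frequencies $\pm n$, which is automatic since $\tilde s_{\b,N}$ is real.

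The main obstacle, conceptually, is reconciling two apparently competing requirements: pointwise (indeed uniform) closeness to a discontinuous step function in regions arbitrarily close to the jumps, and entirety of the complexification on all of $\C$ (as needed for (\ref{4.569})). Standard analytic mollifications — convolving with a Gaussian, or using $\arctan$-like transitions — produce entire extensions, but controlling them in the imaginary direction is delicate, and their periodicity is not automatic. The trigonometric polynomial approach short-circuits all of this: entirety, periodicity, and real-valuedness are built into the form of the approximant, and the only analytic work is the (well-known) uniform convergence of Fejér sums on sets bounded away from the discontinuities. Compared with the construction in \cite{BK}, which the paper cites for inspiration, this gives a transparent route to all three conclusions simultaneously.
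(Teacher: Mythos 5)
Your argument is correct, but it is worth noting that the paper does not actually prove Lemma \ref{lemma approx} at all: it defers to \cite[Lemma 4.7]{Ba-Ns} and \cite[Lemma 3.6]{Ku-Wm}, where the approximating function is produced by an explicit mollification-type construction, i.e.\ by convolving the ($1/N$-periodic) step function with a concretely chosen entire, periodic approximate-identity kernel and checking entirety, periodicity and the off-$F$ closeness by hand. Your route is a genuine, self-contained alternative in the same spirit but with a different kernel: the Fej\'er mean of order $M$ of the expansion of $\tilde{s}_{\b,N}$ in the characters $e^{2\pi i nNx}$ is a convolution with the Fej\'er kernel, so entirety (a trigonometric polynomial), exact $1/N$-periodicity of the complexification, and real-valuedness come for free, and the only analytic content is uniform convergence of Ces\`aro means away from the jumps. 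That step is sound as you state it, with one small precision worth making explicit: what is needed is that $\tilde{s}_{\b,N}$, as a function on the circle, is continuous at every point of the \emph{closure} of $[0,1)\setminus F$, which holds because every such point is at distance at least $\frac{\d}{2kN}$ from the jump set $\{i/(kN)\}$ (this is exactly why $I_0$ must cover both sides of $0$); then a compactness argument gives a uniform modulus of continuity and the standard splitting of $\int K_M(t)\left(\tilde{s}_{\b,N}(x-t)-\tilde{s}_{\b,N}(x)\right)dt$ into $|t|<\eta$ and $|t|\geq\eta$ yields \eqref{nearness} for $M$ large. Since the Fej\'er kernel is nonnegative with integral one, $\|s_{\b,N}\|_\infty\leq\|\tilde{s}_{\b,N}\|_\infty\leq 1$, so the asserted $L^1$-closeness also follows from \eqref{nearness} together with $\mu(F)\leq\d$. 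Compared with the cited construction, your approach is arguably more transparent and quantifiable (Jackson-type bounds if desired), while the explicit kernels of \cite{Ba-Ns}, \cite{Ku-Wm} are tailored to the later norm estimates in those papers; for the statement of the lemma as given, either suffices.
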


\begin{proof}
See \cite[Lemma 4.7]{Ba-Ns} and \cite[Lemma 3.6]{Ku-Wm}.
\end{proof}

Note that the condition \ref{boundedness} in particular implies 
\begin{align*}
\sup_{z: \text{Im}(z)<\rho}s_{\b,N}(z)<\infty\quad\forall\; \rho>0.
\end{align*}
Indeed, for any $\rho>0$, put $\Omega'_\rho=\{z=x+iy:x\in [0,1], |y|<\rho\}$ and note that entirety of $s_{\b,N}$ combined with compactness of $\overline{\Omega'_\rho}$ implies $\sup_{z\in\Omega'_\rho}|s_{\b,N}(z)|<C$ for some constant $C$. Periodicity of $s_{\a,N}$ in the real variable and the observation $\Omega_\rho=\cup_{n\in\Z}\left(\Omega'_\rho + n\right)$ implies that $\sup_{z\in\Omega_\rho}|s_{\b,N}(z)|<C$. We have essentially concluded that  $s_{\b,N}\in C^\omega_\infty(\T^1)$. 

Finally we piece together everything and demonstrate how a block-slide type of map on the torus can be approximated by a measure-preserving real-analytic diffeomorphism.

\begin{proposition} \label{proposition approximation}
Let $\mathfrak{h}:\T^d\to\T^d$ be a block-slide type of map which commutes with $\phi^{1/q}$ for some natural number $q$. Then for any $\e>0$ and $\d>0$, there exists a real-analytic diffeomorphism $h\in\text{Diff }^\omega_\infty(\T^d,\mu)$ satisfying the following conditions:
\begin{enumerate}
\item Proximity property: There exists a set $E\subset\T^d$ such that $\mu(E)<\d$ and $\sup_{x\in\T^d\setminus E}\|h(x)-\mathfrak{h}(x)\|<\e$. 
\item Commuting property: $h\circ\phi^{1/q}=\phi^{1/q}\circ h$.
\end{enumerate} 
In this case we say the the diffeomorphism $h$ is $(\e,\d)$-close to the block-slide type map $\mathfrak{h}$. 
\end{proposition}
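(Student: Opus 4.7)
The plan is to decompose the block-slide type map $\mathfrak{h}$ into an ordered composition $\mathfrak{h} = \mathfrak{h}_m \circ \cdots \circ \mathfrak{h}_1$ of elementary factors, each of the form $(x_1,\ldots,x_d) \mapsto (x_1,\ldots, x_{i_k} + s_k(x_{j_k}), \ldots, x_d)$ for some step function $s_k$ and indices $i_k \neq j_k$. By grouping the atomic shears appropriately I would arrange that each factor $\mathfrak{h}_k$ individually commutes with $\phi^{1/q}$. The commutation condition for a single factor is very explicit: if $j_k \neq 1$ (so the step function does not depend on the coordinate translated by $\phi^{1/q}$) commutation is automatic, while if $j_k = 1$ then $\mathfrak{h}_k \circ \phi^{1/q} = \phi^{1/q} \circ \mathfrak{h}_k$ is equivalent to $s_k$ being $1/q$-periodic.

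Next I would put each $s_k$ into the normalized form $\tilde{s}_{\b_k, N_k}$ required by Lemma \ref{lemma approx}, choosing the parameters so that all discontinuities of $s_k$ lie on the grid $\tfrac{i}{k_k N_k}$ and, crucially, so that $N_k$ is a multiple of $q$ whenever $j_k = 1$ (this is compatible with the $1/q$-periodicity of $s_k$ in that case). Lemma \ref{lemma approx} then produces a real-analytic approximant $s_{\b_k,N_k}$ that is $1/N_k$-periodic, hence $1/q$-periodic when needed, and whose complexification extends to an entire function. I set $h_k(x_1,\ldots,x_d) := (x_1,\ldots, x_{i_k} + s_{\b_k,N_k}(x_{j_k}),\ldots,x_d)$. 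Each $h_k$ lies in $\text{Diff}^\omega_\infty(\T^d,\mu)$, since its Jacobian is upper-triangular with unit diagonal and its inverse is the analogous shear with $-s_{\b_k,N_k}$, and by construction each $h_k$ commutes with $\phi^{1/q}$. Setting $h := h_m \circ \cdots \circ h_1$ and invoking the closure of $\text{Diff}^\omega_\infty(\T^d,\mu)$ under composition established earlier yields a real-analytic measure-preserving diffeomorphism that commutes with $\phi^{1/q}$.

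For the proximity estimate I would choose the tolerances $(\eps_k, \d_k)$ supplied by Lemma \ref{lemma approx} with $\sum_k \eps_k < \eps$ and $\sum_k \d_k < \d$. Write $\mathfrak{h}^{(k)} := \mathfrak{h}_k \circ \cdots \circ \mathfrak{h}_1$ and let $F_k \subset \T^d$ be the bad set of Lemma \ref{lemma approx} for the $k$-th factor (a $\d_k$-neighbourhood of the hyperplanes $\{x_{j_k} \in \tfrac{1}{k_k N_k}\Z\}$). The exceptional set
\begin{equation*}
E := \bigcup_{k=1}^{m} \bigl(\mathfrak{h}^{(k-1)}\bigr)^{-1}(F_k)
\end{equation*}
has measure at most $\sum_k \mu(F_k) < \d$ because each $\mathfrak{h}^{(k-1)}$ is measure-preserving. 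Outside $E$ no step function sees a discontinuity, so each $\mathfrak{h}_k$ acts as an honest translation on the trajectory point reached at step $k$; a telescoping argument based on $\sup_{y \notin F_k}\|h_k(y) - \mathfrak{h}_k(y)\| < \eps_k$ together with the fact that translations are isometries then gives $\sup_{x \notin E}\|h(x) - \mathfrak{h}(x)\| < \sum_k \eps_k < \eps$.

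The principal obstacle I anticipate is the decomposition step: given $\mathfrak{h}$ as a $\phi^{1/q}$-commuting composition, the individual factors in an arbitrary decomposition need not commute with $\phi^{1/q}$, since commutation is not inherited by factors of a product. In the block-slide maps that arise in the AbC construction in later sections this is not an issue because the factors are already assembled in commuting form, but for a general proof one has to verify that the semigroup of $\phi^{1/q}$-equivariant block-slide maps is generated by its elementary commuting members, which one can do by a direct case analysis of the commutation of an atomic shear with translation in the $x_1$-coordinate.
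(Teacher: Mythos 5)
Your strategy is the intended one: the paper itself gives no argument for this proposition but simply cites \cite[Proposition 2.22]{Ba-Ku}, and the proof there is exactly the factor-by-factor scheme you outline (approximate each elementary shear via the step-function lemma, compose, use measure preservation of the exact factors to control the exceptional set, and use closure of $\text{Diff}^\omega_\infty(\T^d,\mu)$ under composition). However, as a standalone proof your write-up has a genuine hole precisely at the point you yourself flag and then wave away. The hypothesis ``$\mathfrak{h}$ commutes with $\phi^{1/q}$'' is only used through the claim that $\mathfrak{h}$ can be re-decomposed into elementary shears each of which commutes with $\phi^{1/q}$, i.e.\ that the group of $\phi^{1/q}$-equivariant block-slide maps is generated by its equivariant elementary members. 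Equivariance of a composition does not localize to its factors, and the ``direct case analysis'' you invoke only settles words of length two; for longer compositions (where factors of type $x_{i}\mapsto x_i+s(x_1)$ with non-periodic $s$ can conceivably compensate each other) you would need an actual argument, e.g.\ via the quotient by the $\mathbb{Z}/q\mathbb{Z}$-action generated by $\phi^{1/q}$, and none is supplied. In the paper (and in \cite{Ba-Ku}) this issue never arises because every block-slide map that is actually approximated is handed to the proposition as an explicit composition of individually equivariant factors: all the $x_1$-dependent step functions used later, such as $\psi^{(\mathfrak{2})}_{i,l,q}$, are $1/q$-periodic by construction. So either prove the generation statement or weaken the proposition to block-slide maps presented as compositions of $\phi^{1/q}$-commuting factors, which is all that is ever needed.

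A second, smaller gap sits in the proximity estimate. Your set $E$ only guarantees that the exact trajectory $y_{k-1}=\mathfrak{h}^{(k-1)}(x)$ avoids the bad slab $F_k$, but the telescoping needs either $\|h_k(z_{k-1})-\mathfrak{h}_k(z_{k-1})\|<\e_k$ at the approximate trajectory point $z_{k-1}=h_{k-1}\circ\cdots\circ h_1(x)$, which requires $z_{k-1}\notin F_k$, or a Lipschitz bound on $h_k$, which is unavailable because the analytic approximants have enormous derivatives near the jumps of the step functions; ``translations are isometries'' applies to $\mathfrak{h}_k$, not to $h_k$, and only when $z_{k-1}$ and $y_{k-1}$ lie in the same continuity cell of $s_k$. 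The standard repair is available with the data at hand: fix the $\d_k$ (hence the slab geometry of every factor) first, define $E$ using slightly enlarged slabs, and then choose each $\e_k$ smaller than both $\e/m$ and the margin of order $\d_j/(k_j N_j)$ of every later factor $j>k$, so that closeness of the two trajectories forces the approximate one into the good region and the same continuity cell. With these two repairs your argument coincides with the cited proof.
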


\begin{proof}
See \cite[Proposition 2.22]{Ba-Ku}
\end{proof}

\subsection{Real-analytic AbC-method} \label{subsection abc method}

Our objective now is to recall the approximation by conjugation scheme developed by Anosov and Katok in \cite{AK}. Though we modify this scheme slightly to be more suitable for our purpose and fit the notations of our article we insist that the method presented here is almost identical to the original construction.

The AbC-method is an inductive process where a sequence of diffeomorphisms $T_n\in\text{Diff }^\omega_\infty(\T^d,\mu)$ is constructed inductively. The diffeomorphisms $T_n$ converge to some diffeomorphism $T$ $\in$ $\text{Diff }^\omega_\rho(\T^d,\mu)$. Additionally $T_n$ s are chosen carefully so that they satisfy some finite version of the desired property of $T$. 

We now give an explicit description. At the beginning of the construction we fix a constant $\rho>0$ and note that all parameters chosen will depend on this $\rho$. 

Assume that the construction has been carried out up to the $n-1$ th stage and we have the following information available to us:

\begin{enumerate}
\item We have sequences of natural numbers $\{p_m\}_{m=1}^n$, $\{q_m\}_{m=1}^n$, $\{k_m\}_{m=1}^{n-1}$, $\{l_m\}_{m=1}^{n-1}$, a sequence of functions $\{a_m:\{0,\ldots, l_m-1\}\to\{0,\ldots, q_m-1\}\}_{m=1}^{n-1}$ and sequences of numbers $\{\a_m\}_{m=1}^n$, $\{\e_m\}_{m=1}^{n-1}$. They satisfy the following conditions:
\begin{align}
p_{m}=k_{m-1}l_{m-1}q_{m-1}p_{m-1} + 1,\qquad q_{m}=k_{m-1}l_{m-1}q_{m-1}^2,\qquad \a_m = \frac{p_m}{q_m}, \qquad \e_m< 2^{-q_m}.
\end{align} 

\item The sequence of diffeomorphisms $\{T_m\}_{m=1}^{n-1}$ is constructed as conjugates of a periodic translation. More precisely,
\begin{align}
T_m:=H_m^{-1}\circ\phi^{\a_{m+1}}\circ H_m,\qquad\qquad H_m:=h_m\circ H_{m-1},\qquad\qquad h_m\in\text{Diff }^{\omega}_\infty(\T^d,\mu).
\end{align}
The diffeomorphisms $\{h_{m}\}_{m=1}^{n-1}$ satisfy the following commuting condition:
\begin{align}
h_{m}\circ\phi^{\a_{m}}=\phi^{\a_{m}}\circ h_{m}.
\end{align}

\item For $m =1,\ldots, n-1$, the diffeomorphism $T_m$ preserves and permutes two sequences of partitions, namely, $H_m^{-1}\mathcal{R}_{a_{m+1},l_{m+1},q_{m+1}}$ and $\mathcal{F}_{q_{m+1}}:=H_m^{-1}\mathcal{T}_{q_{m+1}}$.

\item For $m =1,\ldots, n-1$, $\mu(h_{m}^{-1}R_{i,q_{m}}\triangle\Delta_{i,q_{m}})<\e_{m}$ for any $R_{i,q_{m}}\in\mathcal{R}_{a_{m},l_{m},q_{m}}$ and $\Delta_{i,q_{m}}\in \mathcal{T}_{q_{m}}$ with the same $i$.

\item For $m =1,\ldots, n-1$, $\text{diam} (\mathcal{F}_{q_{m+1}}\cap E_{m})<\e_m$ \footnote{ This means that the diameter of the intersection of any atom of $\mathcal{F}_{q_m}$ and $E_m$ is less that $\e_m$.} for some measurable set $E_m$ satisfying $\mu(E_m)>1-\e_m$. (Note that this means $\mathcal{F}_{q_{m+1}}$  is a generating but not necessarily monotonic sequence of partitions.)

\item For $m =1,\ldots, n-1$: $d_\rho(T_m,T_{m-1})<\e_m$.
\end{enumerate}

Now we show how to do the construction at the $n$ th stage of this induction process. We proceed in the following order:
\begin{enumerate}
\item The number $l_n$ has to be a large enough integer so that the following conditions are satisfied:
\begin{equation}\label{eq:l}
l_n>d \cdot n^2 \cdot \|DH^{-1}_{n-1}\|_0,
\end{equation}
\begin{equation} \label{eq:leven}
2 \cdot l_{n-1} \cdot q_n \text{ divides } l_n.
\end{equation}

\item We choose $l_n $ and our function $a_n:\{0,\ldots, l_n-1\}\to\{0,\ldots, q_n-1\}$. This choice will depend on the construction we are doing and the specific properties we are targeting to prove.

\item Find a block-slide type map $\mathfrak{h}_{a_n,l_n,q_n}$ which commutes with $\phi^{\a_n}$, maps the partition $\mathcal{G}_{l_n,q_n}$ to $\mathcal{T}_{l_n^dq_n}$ and it maps the partition $\mathcal{T}_{q_n}$ to the partition $\mathcal{R}_{a_n,l_n,q_n}$. 

\item Use Proposition \ref{proposition approximation} to construct $h_{n}$ which is $(\e_n,\delta_n)$ close to $\mathfrak{h}_{a_n,l_n,q_n}$. Put $E_n$ to be the error set in Proposition \ref{proposition approximation}. In our constructions we will choose
\begin{equation}
    \delta_n = \frac{1}{n \cdot q_n}, \ \ \e_n = \frac{\delta_n}{4 \cdot l^d_n \cdot q^2_n}.
\end{equation}

\item By choosing $k_n$ sufficiently large we ensure that $|\a_{n+1}-\a_n|$ is small enough to guarantee $d_\rho(T_{n},T_{n-1})<\e_m$. 

\end{enumerate}
This completes the construction at the $n$ th stage. Note that this way convergence of $T_n$ to some $T\in\text{Diff }^\omega_\rho(\T^d,\mu)$ is guaranteed.  

\section{Periodic approximation in Ergodic Theory} \label{section:theory}
In this section we provide a short introduction to the concept of periodic approximation in Ergodic Theory. A more comprehensive presentation can be found in \cite{K}. \\
Let $\left(X, \mu\right)$ be a Lebesgue space. A \emph{tower} $t$ of height $h(t)=h$ is an ordered sequence of disjoint measurable sets $t=\left\{c_1,...,c_h\right\}$ of $X$ having equal measure, which is denoted by $m\left(t\right)$. The sets $c_i$ are called the \emph{levels} of the tower, especially $c_1$ is the \emph{base}. Associated with a tower there is a \emph{cyclic permutation} $\sigma$ sending $c_1$ to $c_2$, $c_2$ to $c_3$,... and $c_h$ to $c_1$. Using the notion of a tower we can give the next definition:
\begin{definition}
A \emph{periodic process} is a collection of disjoint towers covering the space $X$ and the associated cyclic permutations together with an equivalence relation among these towers identifying their bases.
\end{definition}
There are two partial partitions associated with a periodic process: The partition $\xi$ into all levels of all towers and the partition $\eta$ consisting of the union of bases of towers in each equivalence class and their images under the iterates of $\sigma$, where when we go beyond the height of a certain tower in the class we drop this tower and continue until the highest tower in the equivalence class has been exhausted. Obviously, we have $\eta \leq \xi$.\\
A sequence $\left(\xi_n, \eta_n, \sigma_n\right)$ of periodic processes is called \emph{exhaustive} if $\eta_n \rightarrow \varepsilon$. Such an exhaustive sequence of periodic processes is \emph{consistent} if for every measurable subset $A\subseteq X$ and each union of sets in $\eta_n$ approximating $A$ (i.\,e. $\mu\left(A_n \triangle A\right) \rightarrow 0$ as $n\rightarrow \infty$) the sequence $\sigma_n\left(A_n\right)$ converges to a set $B$, i.\,e. $\mu\left(\sigma_n\left(A_n\right) \triangle B\right) \rightarrow 0$ as $n\rightarrow \infty$. Moreover, we will call a sequence of towers $t^{(n)}$ from the periodic process $\left(\xi_n, \eta_n, \sigma_n\right)$ \emph{substantial} if there exists $r>0$ such that $h\left(t^{(n)}\right) \cdot m\left(t^{(n)}\right) >r$ for every $n \in \mathbb{N}$.
\begin{definition}
Let $T: \left(X, \mu\right)\rightarrow \left(X, \mu\right)$ be a measure-preserving transformation. An exhaustive sequence of periodic processes $\left(\xi_n,\eta_n, \sigma_n\right)$ forms a \emph{periodic approximation} of $T$ if 
\begin{equation*}
d\left(\xi_n,T,\sigma_n\right)=\sum_{c \in \xi_n} \mu\left(T\left(c\right) \triangle \sigma_n\left(c\right)\right) \rightarrow 0 \ \ \ \ \text{as } n\rightarrow \infty.
\end{equation*}
Given a sequence $g\left(n\right)$ of positive numbers we will say that the transformation $T$ admits a periodic approximation with \emph{speed} $g\left(n\right)$ if for a certain subsequence $\left(n_k\right)_{k \in \mathbb{N}}$ there exists an exhaustive sequence of periodic processes $\left(\xi_k,\eta_k,\sigma_k\right)$ such that $d\left(\xi_k,T,\sigma_k\right)<g\left(n_k\right)$.
\end{definition}
In order to define the type of the periodic approximation we need the notion of equivalence for sequences of periodic processes:
\begin{definition}
Two sequences of periodic processes $P_n = \left(\xi_n,\eta_n,\sigma_n\right)$ and $P'_n=\left(\xi'_n,\eta'_n,\sigma'_n\right)$ are called \emph{equivalent} if for every $n \in \mathbb{N}$ there is a bijective correspondence $\theta_n$ between subsets $S_n$ and $S'_n$ of the sets of towers of $P_n$ respectively $P'_n$ such that
\begin{itemize}
	\item For $t \in S_n$: $h\left(\theta_n\left(t\right)\right)=h\left(t\right)$.
	\item $\sum_{t \in S_n} h\left(t\right)m\left(t\right) \rightarrow 1$ as $n\rightarrow \infty$.
	\item $\sum_{t \in S_n} h\left(t\right) \cdot \left|m\left(t\right)-m\left(\theta_n\left(t\right)\right)\right|\rightarrow 0$ as $n\rightarrow \infty$.
	\item If two towers from $S_n$ are equivalent in $P_n$, then their images under $\theta_n$ are equivalent in $P'_n$.
\end{itemize}
\end{definition}
There are various types of approximation. We introduce the most important ones:
\begin{definition}
\begin{enumerate}
	\item A \emph{cyclic process} is a periodic process which consists of a single tower of height $h$. An approximation by an exhaustive sequence of cyclic processes is called a cyclic approximation. More specifically we will refer to a cyclic approximation with speed $o\left(\frac{1}{h}\right)$ as a \emph{good cyclic approximation}.
	\item An approximation generated by periodic processes equivalent to periodic processes consisting of two substantial towers whose heights differ by one is said to be of \emph{type $\left(h,h+1\right)$}. Equivalently the heights of the two towers $t_1$ and $t_2$ with base $B_1$ resp. $B_2$ are equal to $h$ and $h+1$ and for some $r>0$ we have $\mu\left(B_1\right) > \frac{r}{h}$ as well as $\mu\left(B_2\right)>\frac{r}{h+1}$. We will call the approximation of type $\left(h,h+1\right)$ with speed $o\left(\frac{1}{h}\right)$ \emph{good} and with speed $o\left(\frac{1}{h\cdot (h+1)}\right)$ \emph{excellent}.
	\item An approximation of type $\left(h,h+1\right)$ will be called a \emph{linked approximation of type $\left(h,h+1\right)$} if the two towers involved in the approximation are equivalent. This insures that the sequence of partitions $\eta_n$ generated by the union of the bases of the two towers and the iterates of this set converges to the decomposition into points.
\end{enumerate}
\end{definition}
\begin{remark} \label{rem:bem1}
As noted in \cite{Ry2} a good linked approximation of type $\left(h,h+1\right)$ implies the convergence
\begin{equation*}
U^{k \cdot \left(h+1\right)}_T \longrightarrow_{w} r \cdot U^k_T + \left(1-r\right) \cdot Id
\end{equation*}
in the weak operator topology for every $k \in \mathbb{N}$ and some $r \in \left(0,1\right)$, where $U_T$ is the Koopman-operator of $T$ (see section \ref{subsection:specm}).
\end{remark}
From the different types of approximations various ergodic properties can be derived. For example in \cite[Corollary 2.1.]{KS1} the subsequent Lemma is proven.
\begin{lemma} \label{lem:erg}
Let $T: \left(X, \mu\right)\rightarrow \left(X, \mu\right)$ be a measure-preserving transformation. If $T$ admits a good cyclic approximation, then $T$ is ergodic.
\end{lemma}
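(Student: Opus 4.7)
Proof proposal for Lemma \ref{lem:erg}.

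The plan is to argue by contradiction: assume there is a $T$-invariant set $A$ with $\alpha := \mu(A) \in (0,1)$, and use the good cyclic approximation to conclude that the densities of $A$ in each level of the approximating tower must become almost constant, which would prevent $A$ from being approximated by unions of tower levels and contradict exhaustiveness $\eta_n \to \varepsilon$.

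Concretely, let $(\xi_n, \eta_n, \sigma_n)$ realize the good cyclic approximation, with $\xi_n = \{c_0^n, \dots, c_{h_n-1}^n\}$ the $h_n$ levels of the single tower (so $\mu(c_i^n) = 1/h_n$ and $\eta_n = \xi_n$ since there is only one equivalence class), and $\sigma_n(c_i^n) = c_{i+1 \bmod h_n}^n$. The first main step is to control how the density $\rho_n(i) := h_n \cdot \mu(c_i^n \cap A)$ changes as $i$ increments. Because $A$ is $T$-invariant, $\mu(T(c_i^n) \cap A) = \mu(c_i^n \cap A)$, and the triangle inequality applied to $\sigma_n(c_i^n) \triangle T(c_i^n)$ gives
\begin{equation*}
\bigl|\mu(\sigma_n(c_i^n) \cap A) - \mu(c_i^n \cap A)\bigr| \leq \mu\bigl(\sigma_n(c_i^n) \triangle T(c_i^n)\bigr).
\end{equation*}
Summing over $i$ and multiplying by $h_n$ yields
\begin{equation*}
\sum_{i=0}^{h_n-1} \bigl|\rho_n(i+1) - \rho_n(i)\bigr| \;\leq\; h_n \cdot d(\xi_n, T, \sigma_n) \;=\; h_n \cdot o(1/h_n) \;\longrightarrow\; 0,
\end{equation*}
so the cyclic total variation $V_n$ of the function $i \mapsto \rho_n(i)$ on $\mathbb{Z}/h_n\mathbb{Z}$ tends to zero.

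The second main step is purely combinatorial: on a cycle, $\max_i \rho_n(i) - \min_i \rho_n(i) \leq V_n/2$, because any two indices can be joined along the shorter of the two cyclic paths. Combined with the mean identity $\frac{1}{h_n}\sum_i \rho_n(i) = \mu(A) = \alpha$, this forces $\rho_n(i) \to \alpha$ uniformly in $i$. Consequently, for every choice of $A_n$ as a union of levels,
\begin{equation*}
\mu(A_n \triangle A) \;\geq\; \frac{1}{h_n} \sum_{i=0}^{h_n-1} \min\bigl(\rho_n(i),\, 1-\rho_n(i)\bigr) \;\longrightarrow\; \min(\alpha, 1-\alpha) \;>\; 0.
\end{equation*}
But the exhaustiveness of the periodic approximation ($\eta_n = \xi_n \to \varepsilon$) demands that $A$ be approximable by unions of elements of $\eta_n$, i.e.\ $\inf_{A_n} \mu(A_n \triangle A) \to 0$. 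This contradiction establishes ergodicity.

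The part I expect to require the most care is the passage from small total variation on a cyclic sequence of prescribed mean to uniform closeness to that mean; it is elementary but relies crucially on the cyclic structure (which is why the \emph{cyclic} nature of the process, and not merely a tower approximation, is being used), and on the speed condition $g(h) = o(1/h)$ exactly compensating for the factor $h_n$ picked up when converting from the measure-level estimate $d(\xi_n,T,\sigma_n)$ to the density-level total variation. Everything else is bookkeeping with $T$-invariance and measure preservation.
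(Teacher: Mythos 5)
Your argument is correct, but it cannot be compared line-by-line with ``the paper's proof'' because the paper gives none: Lemma \ref{lem:erg} is simply quoted from Katok--Stepin \cite[Corollary 2.1.]{KS1}. So what you have produced is a self-contained proof of a cited result, and it checks out. Since a cyclic process consists of a single tower covering $X$, each level has measure $1/h_n$ and indeed $\eta_n=\xi_n$; for a $T$-invariant set $A$ the estimate $\left|\mu(\sigma_n(c)\cap A)-\mu(T(c)\cap A)\right|\le\mu\left(\sigma_n(c)\triangle T(c)\right)$ together with $\mu(T(c)\cap A)=\mu(c\cap A)$ gives cyclic total variation $V_n\le h_n\, d(\xi_n,T,\sigma_n)\to 0$ for the densities $\rho_n(i)$, and the speed $o(1/h)$ is exactly what makes this go to zero; the two-path argument on the cycle gives $\max_i\rho_n(i)-\min_i\rho_n(i)\le V_n/2$; the mean identity $\frac{1}{h_n}\sum_i\rho_n(i)=\mu(A)$ (valid because the tower exhausts $X$) then forces $\rho_n(i)\to\mu(A)$ uniformly; and your lower bound $\mu(A_n\triangle A)\ge\frac{1}{h_n}\sum_i\min\left(\rho_n(i),1-\rho_n(i)\right)$ for an arbitrary union of levels $A_n$ contradicts $\eta_n\to\varepsilon$ whenever $\mu(A)\in(0,1)$. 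Two small remarks: the identity $\mu(T(c)\cap A)=\mu(c\cap A)$ as written uses invertibility of $T$ (harmless here, since the paper deals with automorphisms; otherwise phrase it via preimages and $T^{-1}A=A$), and your route consumes the full strength of the speed $o(1/h)$, whereas the cited Katok--Stepin result is sharper --- ergodicity already follows from a cyclic approximation with speed $\theta/h$ for a suitable constant $\theta$ --- a refinement obtained by a more careful argument that the present paper does not need.
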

In \cite{KS} Katok and Stepin proved the genericity of automorphisms having a continuous spectrum in the set of measure-preserving homeomorphisms (recall that a transformation has a continuous spectrum, i.e. the corresponding operator $U_T$ in the space $L^2\left(M,\mu\right)$ has no eigenfunctions other than constants, if and only if it is weakly mixing). For this purpose, they deduced the following result:
\begin{lemma}[\cite{KS}, Theorem 5.1.] \label{lem:wm}
Let $T: \left(X, \mu\right)\rightarrow \left(X, \mu\right)$ be a measure preserving transformation. If $T$ is ergodic and admits a good approximation of type $\left(h,h+1\right)$, then $T$ has continuous spectrum.
\end{lemma}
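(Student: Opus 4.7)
The plan is a classical eigenvalue-roots-of-unity argument: if $U_T$ had any eigenvalue $\lambda$, then the two substantial towers of heights $h_n$ and $h_n+1$ would simultaneously force $\lambda^{h_n}\to 1$ and $\lambda^{h_n+1}\to 1$ along the approximating subsequence, so $\lambda=1$, which combined with ergodicity forbids non-constant eigenfunctions. Suppose for contradiction that $T$ has non-trivial point spectrum, so there exist $\lambda$ with $|\lambda|=1$ and $f\in L^2(X,\mu)$ with $\int f\,d\mu=0$, $\|f\|_2=1$, and $U_T f=\lambda f$. Since $|f|$ is $T$-invariant and $T$ is ergodic, $|f|\equiv 1$ almost everywhere. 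Pass to the subsequence along which the good approximation of type $(h,h+1)$ holds, with periodic processes $(\xi_n,\eta_n,\sigma_n)$ and substantial towers $\tau^{(n)}_1,\tau^{(n)}_2$ of heights $h_n$ and $h_n+1$, so $\mu(\tau^{(n)}_j)\ge r>0$ uniformly and $d(\xi_n,T,\sigma_n)=o(1/h_n)$. Because $\eta_n\to\varepsilon$ and $\eta_n\le\xi_n$, the finer partition $\xi_n$ also converges to the decomposition into points, so one can pick $\xi_n$-measurable $f_n$ with $\|f_n\|_\infty\le 1$ and $\|f-f_n\|_2\to 0$.

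\textbf{Iterate estimate.} The computational core is the telescoping bound
\begin{equation*}
d(\xi_n, T^k, \sigma_n^k)\;\le\;k\cdot d(\xi_n,T,\sigma_n),
\end{equation*}
proved inductively by splitting $T^k(c)\triangle\sigma_n^k(c)$ via an intermediate term and using measure-preservation of $T$ and $\sigma_n$. Combined with the observation that for $\xi_n$-measurable $g$ with $\|g\|_\infty\le 1$ the difference $g\circ T^k-g\circ\sigma_n^k$ vanishes wherever $T^k x$ and $\sigma_n^k x$ lie in a common level, it gives
\begin{equation*}
\|U_T^k g-U_{\sigma_n}^k g\|_2^2\;\le\;C\cdot k\cdot d(\xi_n,T,\sigma_n)
\end{equation*}
for an absolute constant $C$, which is $o(1)$ when $k\in\{h_n,h_n+1\}$. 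Applying this to $g=f_n$ and using $U_T^k f=\lambda^k f$ together with $\|f-f_n\|_2\to 0$ gives
\begin{equation*}
\|U_{\sigma_n}^{h_n}f_n-\lambda^{h_n}f_n\|_2\to 0\qquad\text{and}\qquad\|U_{\sigma_n}^{h_n+1}f_n-\lambda^{h_n+1}f_n\|_2\to 0.
\end{equation*}

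\textbf{Reading off the eigenvalue.} Now exploit the cyclic structure tower by tower. On $\tau^{(n)}_1$ of height $h_n$, the permutation $\sigma_n^{h_n}$ is the identity, so $U_{\sigma_n}^{h_n}f_n=f_n$ pointwise on $\tau^{(n)}_1$; restricting the first estimate to $\tau^{(n)}_1$ yields
\begin{equation*}
|\lambda^{h_n}-1|\cdot\|f_n\|_{L^2(\tau^{(n)}_1)}\longrightarrow 0.
\end{equation*}
Since $|f|\equiv 1$ forces $\|f\|_{L^2(\tau^{(n)}_1)}^2=\mu(\tau^{(n)}_1)\ge r$ uniformly, and $f_n\to f$ in $L^2$, the norm $\|f_n\|_{L^2(\tau^{(n)}_1)}$ stays bounded below by $\sqrt r/2$ for large $n$, forcing $\lambda^{h_n}\to 1$. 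The identical reasoning on $\tau^{(n)}_2$ of height $h_n+1$ with $k=h_n+1$ yields $\lambda^{h_n+1}\to 1$. Taking the ratio, $\lambda=\lim\lambda^{h_n+1}/\lambda^{h_n}=1$, so $f$ is a non-constant eigenfunction for the eigenvalue $1$ — contradicting ergodicity. Hence $T$ has continuous spectrum.

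\textbf{Main obstacle.} The delicate step is securing a uniform positive lower bound on $\|f_n\|_{L^2(\tau^{(n)}_j)}$ for \emph{both} $j=1,2$ along the \emph{same} subsequence, so that $\lambda^{h_n}\to 1$ and $\lambda^{h_n+1}\to 1$ survive together and the ratio can pin down $\lambda$. This is precisely where substantiality of both towers and the ergodicity-driven normalization $|f|\equiv 1$ are indispensable; without either, an eigenvalue other than $1$ could a priori slip through.
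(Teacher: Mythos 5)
Your proof is correct: the paper gives no argument of its own for this lemma, merely citing \cite{KS}, and your eigenvalue argument --- telescoping the approximation error over $h_n$ and $h_n+1$ iterates, using substantiality of both towers together with $|f|\equiv 1$ (from ergodicity) to force $\lambda^{h_n}\to 1$ and $\lambda^{h_n+1}\to 1$, hence $\lambda=1$, contradicting $\int f\,d\mu=0$, $\|f\|_2=1$ --- is precisely the classical Katok--Stepin proof that the citation refers to. The only points needing routine care (realizing $\sigma_n$ as an operator on $\xi_n$-measurable functions rather than a point map, and deducing $\xi_n\to\varepsilon$ from exhaustiveness) are handled correctly, so there is no gap.
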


\section{Spectral theory of dynamical systems}
Besides the concept of periodic approximation we will need further mathematical tools. We refer to \cite{Na} and \cite{Go} for more details.

\subsection{Spectral types} \label{subsection:specm}
Let $\left(X, \mu\right)$ be a Lebesgue space and $T:\left(X, \mu\right) \rightarrow \left(X, \mu\right)$ be an automorphism. Then we define the induced \emph{Koopman-operator} $U_T: L^2\left(X, \mu\right) \rightarrow L^2\left(X, \mu\right)$ by $U_T f = f \circ T$. Since
\begin{equation*}
\left\langle U_T f, U_T g \right\rangle = \int_{X} f \circ T \cdot \overline{g \circ T} \:d\mu = \int_{X} f  \cdot \overline{g } \:d\mu = \left\langle f,g\right\rangle \ \ \ \text{ for every } f,g \in L^2\left(X;\mu\right)
\end{equation*}
and $U^{-1}_T = U_{T^{-1}}$ this is an unitary operator on the Hilbert space $L^2\left(X,\mu\right)$.
\begin{remark} \label{rem:conj}
If two measure-preserving dynamical systems $\left(X_1, \mu_1, T_1\right)$ and $\left(X_2, \mu_2, T_2\right)$ are metrically isomorphic, their isomorphism $h: X_1 \rightarrow X_2$ induces an isomorphism of Hilbert spaces $V_h: L^2\left(X_2, \mu_2\right) \rightarrow L^2\left(X_1, \mu_1\right)$ by $\left(V_hf\right) = f \circ h$. Then we have $U_{T_1} = V_h \circ U_{T_2} \circ V^{-1}_h$ and this relation is called \emph{unitary equivalence} of operators. Hence, any invariant of unitary equivalence defines an invariant of isomorphisms. Such invariants are said to be \emph{spectral invariants} or spectral properties. \\
Moreover, we note that $1$ is always an eigenvalue of $U_T$ because of the constant functions. So when we discuss the spectral properties of $U_T$ we refer to its spectral properties that are restricted to the orthogonal complement of the constants. Hence, we consider the properties of $U_T$ in the space $L^2_0\left(X, \mu\right)$ of all $L^2$-functions with zero integral.
\end{remark}
One of the important spectral invariants are the so-called \emph{spectral measures}: Let $f \in L^2_0\left(X, \mu\right)$ and $Z\left(f\right) \coloneqq \overline{\text{span}\left\{U^n_T f \;: \; n \in \mathbb{Z} \right\}}^{L^2_0\left(X,\mu\right)}$. Using Bochner's theorem one can prove the existence of a finite Borel measure $\sigma_f$ defined on the unit circle $\mathbb{S}^1$ in the complex plane satisfying
\begin{equation*}
\left\langle U^n_T f,f\right\rangle = \int_{\mathbb{S}^1} z^n \:d\sigma_f(z) \ \ \ \ \text{ for every } n \in \mathbb{Z}.
\end{equation*}
Then $\sigma_f$ is called the spectral measure of $f$ with respect to $U_T$. \\
Moreover, by the Hahn-Hellinger Theorem, there is a sequence of functions $f_n \in L^2_0\left(X, \mu\right)$, $n \in \mathbb{N}$, for which
\begin{equation*}
L^2_0\left(X, \mu\right) = \oplus_{n \in \mathbb{N}} Z\left(f_n\right) \ \ \  \text{and} \ \ \ \sigma_{f_1} \gg \sigma_{f_2} \gg ...
\end{equation*}
These measures are unique in the sense that for any other family of functions $g_n \in L^2_0\left(X, \mu\right)$, $n \in \mathbb{N}$, for which $L^2_0\left(X, \mu\right) = \oplus_{n \in \mathbb{N}} Z\left(g_n\right)$ and $\sigma_{g_1} \gg \sigma_{g_2} \gg ...$ we have $\sigma_{f_n} \sim \sigma_{g_n}$ for every $n \in \mathbb{N}$.
\begin{definition}
The spectral type of $\sigma_{f_1}$ is called the \emph{maximal spectral type} $\sigma$ of $U_T$.
\end{definition}
According to this we say that $U_T$ has a continuous spectrum if $\sigma_{f_1}$ is a continuous measure and $U_T$ has a discrete spectrum if $\sigma_{f_1}$ is a discrete measure.

\subsection{Spectral multiplicities}
Besides the maximal spectral type an important characterization of $U_T$ is the \emph{multiplicity function} $M_{U_T}: \mathbb{S}^1 \rightarrow \mathbb{N} \cup \left\{\infty\right\}$, which is $\sigma_{f_1}$- almost everywhere defined by 
\begin{equation*}
M_{U_T}\left(z\right) = \sum^{\infty}_{i=1} \chi_{A_i}\left(z\right), \text{ where } A_i = \left\{ z \in \mathbb{S}^1 \; : \; \frac{d\sigma_{f_i}}{d\sigma_{f_1}}\left(z\right) > 0\right\}.
\end{equation*}
Here $\frac{d\sigma_{f_i}}{d\sigma_{f_1}}$ is the Radon-Nikodym derivative of $\sigma_{f_i}$ with respect to $\sigma_{f_1}$. \\
Using this multiplicity function we establish the set $\mathcal{M}_{U_T}$ of \emph{essential spectral multiplicities}, which is the essential range of $M_{U_T}$ with respect to $\sigma_{f_1}$. Then we define the \emph{maximal spectral multiplicity} $m_{U_T}$ as the essential supremum (with respect to $\sigma_{f_1}$) of $\mathcal{M}_{U_T}$.
\begin{definition}
$U_T$ is said to have \emph{homogeneous spectrum} of multiplicity $m$ if $\mathcal{M}_{U_T}= \left\{m\right\}$. In particular, $U_T$ has a \emph{simple spectrum} if $\mathcal{M}_{U_T}= \left\{1\right\}$. In all other cases $U_T$ has a non-simple spectrum.
\end{definition}

In connection with the previous chapter \ref{section:theory} we state the following result (\cite[Theorem 3.1.]{KS1}):
\begin{lemma} \label{lem:simp}
Let $T$ be an automorphism of a Lebesgue space. If $T$ admits a cyclic approximation of speed $\frac{\theta}{h}$, where $\theta < \frac{1}{2}$, then the spectrum of $U_T$ is simple.
\end{lemma}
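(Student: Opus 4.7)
The plan is to manufacture, for each cyclic process in the approximating sequence, an explicit orthonormal basis of $L^2(\xi_n)$ made of approximate eigenvectors of $U_T$, and then to use that this approximate eigenstructure is rigid enough to rule out a second independent spectral direction. First, pass to a subsequence on which $d(\xi_n, T, \sigma_n) < \theta/h_n$; here $\xi_n = \{c_1^{(n)}, \dots, c_{h_n}^{(n)}\}$ is the partition into the $h_n$ levels of the unique tower $t^{(n)}$, so that $c_{j+1}^{(n)} = \sigma_n^{j}(c_1^{(n)})$ and $h_n m(t^{(n)}) \to 1$. For $k \in \{0, \dots, h_n - 1\}$, set $\lambda_{n,k} := \ee^{2\pi i k/h_n}$ and
\begin{equation*}
v_{n,k} := \frac{1}{\sqrt{h_n\, m(t^{(n)})}} \sum_{j=0}^{h_n - 1} \lambda_{n,k}^{-j}\, \chi_{c_{j+1}^{(n)}}.
\end{equation*}
A direct check will give that the $v_{n,k}$ are orthonormal, span $L^2(\xi_n)$, and are exact eigenvectors of $U_{\sigma_n}$ with eigenvalue $\lambda_{n,k}$.

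\medskip

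The core estimate will be that each $v_{n,k}$ is an approximate eigenvector of $U_T$. Writing $U_T v_{n,k} - \lambda_{n,k} v_{n,k} = (U_T - U_{\sigma_n})v_{n,k}$, expanding the sum, and applying Cauchy--Schwarz together with the identity $\|\chi_A - \chi_B\|_2^2 = \mu(A \triangle B)$ and the speed bound $\sum_{j}\mu\bigl(T^{-1}(c_j^{(n)}) \triangle \sigma_n^{-1}(c_j^{(n)})\bigr) < \theta/h_n$ should yield
\begin{equation*}
\|U_T v_{n,k} - \lambda_{n,k} v_{n,k}\|_2^2 \;\leq\; \theta + o(1)
\end{equation*}
uniformly in $k$. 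In spectral terms, the spectral measure $\sigma_{v_{n,k}}$ of $v_{n,k}$ relative to $U_T$ will satisfy $\int_{\mathbb{S}^1}|z - \lambda_{n,k}|^2\, d\sigma_{v_{n,k}}(z) \leq \theta + o(1)$, so that $\sigma_{v_{n,k}}$ is $L^2$-localized near the unit root $\lambda_{n,k}$.

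\medskip

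Finally, suppose towards contradiction that $U_T$ has non-simple spectrum. Then by Hahn--Hellinger there exist orthogonal unit vectors $f,g \in L^2_0$ with equivalent spectral measures $\sigma_f \sim \sigma_g$. Since $\xi_n \to \varepsilon$, both $f$ and $g$ are approximated in $L^2$ by their projections onto $L^2(\xi_n) = \operatorname{span}\{v_{n,k} : 0 \leq k < h_n\}$, and the coefficients $\langle f, v_{n,k}\rangle$ and $\langle g, v_{n,k}\rangle$ should encode the mass that $\sigma_f$ and $\sigma_g$ place near $\lambda_{n,k}$. The orthogonality $\langle f,g\rangle = 0$ will then force these two coefficient sequences to be incompatible, and combining this with the spectral localization from the previous step should deliver the desired contradiction. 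The main obstacle is expected to be precisely this last step: extracting a sharp quantitative contradiction with the threshold constant $1/2$, which requires a careful accounting showing that two unit-orthogonal approximate eigenvectors with $L^2$-error at most $\sqrt{\theta}$ whose spectral measures both localize at the same $\lambda_{n,k}$ cannot coexist when $\theta < 1/2$ --- this is exactly the range in which the $h_n$ arcs around the $\lambda_{n,k}$ can accommodate only one dimension per spectral location.
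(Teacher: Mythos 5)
The paper does not actually prove this lemma; it is quoted from Katok--Stepin \cite[Theorem 3.1.]{KS1}, so your proposal has to be measured against that classical argument. Your first two steps are sound: the $v_{n,k}$ form an orthonormal eigenbasis of $L^2(\xi_n)$ for $U_{\sigma_n}$, and the estimate $\|U_T v_{n,k}-\lambda_{n,k}v_{n,k}\|\le \sqrt{\theta}\,(1+o(1))$ does follow from Cauchy--Schwarz and the speed bound as you indicate. The genuine gap is the final step, and it is not merely a postponed computation: as set up, it cannot close. First, the consequence of non-simplicity you invoke is too weak. Orthonormal $f,g$ with equivalent spectral measures exist even when the spectrum is simple (in the model $L^2(\sigma)$ with $U$ acting as multiplication by $z$, take $f=1$ and $g$ any unimodular function orthogonal to $1$; then $\sigma_g=\sigma_f$), so no contradiction can possibly be derived from that hypothesis alone; what non-simplicity actually provides, and what must be used, is a pair with equivalent spectral measures whose cyclic subspaces $Z(f)$ and $Z(g)$ are \emph{orthogonal}. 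Second, the quantitative claim you hope to prove --- that two unit-orthogonal approximate eigenvectors with error at most $\sqrt{\theta}$, $\theta<1/2$, localized at the same $\lambda_{n,k}$ cannot coexist --- is false in general: a unitary operator can have an eigenvalue of multiplicity two, with two orthonormal exact eigenvectors. Localization of the spectral measures near the roots of unity is symmetric in $f$ and $g$ and cannot by itself detect multiplicity.

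What is missing is any use of \emph{cyclicity} of the approximating subspace, which is the heart of the Katok--Stepin proof: the same Cauchy--Schwarz computation shows that every $\xi_n$-measurable unit vector lies within $\sqrt{\theta}\,(1+o(1))$ of the single cyclic subspace $Z_n=Z_{U_T}\bigl(\chi_{c_1^{(n)}}\bigr)$ (replace each $\chi_{c_{j+1}^{(n)}}$ by $U_T^j\chi_{c_1^{(n)}}$, the accumulated error being at most $\sqrt{\theta}$ up to normalization), hence, since $\xi_n\to\varepsilon$, both $\|P_{Z_n}f\|^2$ and $\|P_{Z_n}g\|^2$ are at least $1-\theta-o(1)>\tfrac12$ for large $n$. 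On the other hand, a cyclic subspace $Z(v)$ has simple spectrum, and for orthonormal $f,g$ with $\sigma_f\sim\sigma_g$ and $Z(f)\perp Z(g)$ one has the key inequality $\|P_{Z(v)}f\|^2+\|P_{Z(v)}g\|^2\le 1$ for every $v$; this multiplicity-one obstruction is exactly what your sketch never isolates, and it is precisely where the threshold $\theta<\tfrac12$ enters. Your approximate-eigenvector basis is compatible with this scheme, but without the reduction to a single cyclic subspace (or an equivalent multiplicity-one statement) the argument does not reach a contradiction.
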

For automorphisms with simple spectrum we have the subsequent theorem of Ryzhikov (\cite[Theorem 2.1.]{Ry1}):
\begin{lemma} \label{lem:ryz}
Let $\left(X, \mu\right)$ be a Lebesgue probability space and $T: \left(X, \mu\right) \rightarrow \left(X, \mu\right)$ be an automorphism with simple spectrum. Suppose that the weak convergence
\begin{equation*}
U^{k_n}_T \longrightarrow_{w} \left( a \cdot U_T + \left(1-a\right) \cdot Id\right)
\end{equation*}
holds for some $a \in \left(0,1\right)$ and some strictly increasing sequence $\left(k_n\right)_{n \in \mathbb{N}}$ of natural numbers. \\
Then the Cartesian square $T \times T$ has a homogeneous spectrum of multiplicity $2$.
\end{lemma}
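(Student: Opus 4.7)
The plan is to use the natural flip symmetry of $X \times X$ together with the hypothesis to split $L^2_0(X \times X)$ into two $U_{T \times T}$-invariant subspaces on which $U_{T \times T}$ has simple spectrum with equivalent maximal spectral types. Let $P$ denote the flip $P(F)(x,y) = F(y,x)$; since $P^2 = I$ and $P$ commutes with $U_{T \times T}$, it induces an orthogonal splitting $L^2_0(X \times X) = H_+ \oplus H_-$ into the $(\pm 1)$-eigenspaces (symmetric and antisymmetric parts), both invariant under $U_{T \times T}$.

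The key consequence of the hypothesis is extracted by taking tensor squares of the weak convergence $U_T^{k_n} \to a U_T + (1-a) I$, which yields
\[
U_{T \times T}^{k_n} \;=\; U_T^{k_n} \otimes U_T^{k_n} \;\longrightarrow_w\; a^2 \, U_{T \times T} + a(1-a)\, R + (1-a)^2\, I,
\]
where $R := U_T \otimes I + I \otimes U_T$. Hence $R$ lies in the von Neumann algebra $W^{*}(U_{T \times T})$ generated by $U_{T \times T}$. Since $PRP = R$, the operator $R$ preserves both $H_+$ and $H_-$.

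Next I would exploit the simple spectrum of $T$: fix a unit cyclic vector $f \in L^2_0(X)$ for $U_T$, denote its spectral measure by $\sigma$, and set $v_+ := f \otimes f \in H_+$. A direct computation gives $\langle U_{T \times T}^n v_+, v_+\rangle = \widehat{\sigma}(n)^2 = \widehat{\sigma \ast \sigma}(n)$, so the spectral measure of $v_+$ is $\sigma \ast \sigma$. Using that $R \in W^{*}(U_{T \times T})$, I would show $v_+$ is cyclic for $U_{T \times T}|_{H_+}$: applying monomials in $U_{T \times T}$ and $R$ to $v_+$ produces a dense span of symmetric tensors of the form $U_T^m f \otimes U_T^n f + U_T^n f \otimes U_T^m f$, which are total in $H_+$ because $f$ is cyclic for $U_T$. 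Therefore $U_{T \times T}|_{H_+}$ has simple spectrum with maximal spectral type $\sigma \ast \sigma$. A parallel argument starting from an antisymmetric vector $v_- := U_T f \otimes f - f \otimes U_T f \in H_-$ and once again using $R$ establishes simple spectrum of $U_{T \times T}|_{H_-}$.

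The main obstacle will be the final step: showing that the maximal spectral types of $U_{T \times T}|_{H_+}$ and $U_{T \times T}|_{H_-}$ are actually \emph{equivalent}, rather than merely having one absolutely continuous with respect to the other. This is precisely where the specific form of the weak limit and the assumption $a \in (0,1)$ enter. Since $R \in W^{*}(U_{T \times T})$ intertwines the spectral decompositions on $H_+$ and $H_-$ in a nontrivial way (its eigenvalues being the sums $z + w$ with $z, w \in \mathrm{supp}(\sigma)$), one can argue, as in Ryzhikov's original proof, that the Radon--Nikodym density of the spectral measure of $v_-$ with respect to $\sigma \ast \sigma$ is strictly positive $\sigma \ast \sigma$-almost everywhere. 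Once the equivalence of the two types on $H_+$ and $H_-$ is in hand, the multiplicity of $U_{T \times T}$ on $L^2_0(X \times X)$ equals $2$ throughout $\mathrm{supp}(\sigma \ast \sigma)$, yielding the claimed homogeneous spectrum of multiplicity $2$ for $T \times T$.
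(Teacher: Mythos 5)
You should first be aware that the paper does not prove this lemma at all: it is quoted as Ryzhikov's Theorem 2.1 from [Ry99b], so your attempt can only be measured against the known argument. Your central device is indeed the heart of Ryzhikov's proof: tensor-squaring the weak limit (legitimate for uniformly bounded operators) to conclude that $R=U_T\otimes I+I\otimes U_T$ lies in the von Neumann algebra generated by $U_{T\times T}$, and then generating symmetrized vectors from $f\otimes f$; with $S_j:=U_T^j\otimes I+I\otimes U_T^j$ the recursion $S_jR=S_{j+1}+U_{T\times T}S_{j-1}$ does give all vectors $U_T^mf\otimes U_T^nf+U_T^nf\otimes U_T^mf$, which are total in the symmetric part of $L^2_0(X)\otimes L^2_0(X)$.

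However, there are genuine gaps. (i) Your $H_\pm$ are the symmetric and antisymmetric parts of $L^2_0(X\times X)$, and this space is strictly larger than $L^2_0(X)\otimes L^2_0(X)$: it also contains the functions of a single coordinate, i.e. the vectors $g\otimes 1\pm 1\otimes g$ with $g\in L^2_0(X)$, on which $U_{T\times T}$ acts as a copy of $U_T$ with spectral type $\sigma$. Everything you generate from $v_+=f\otimes f$ stays inside $L^2_0\otimes L^2_0$ and is orthogonal to that subspace, so $v_+$ is \emph{not} cyclic for $H_+$ as claimed (likewise for $v_-$ and $H_-$). (ii) Precisely because of these single-coordinate summands, any correct proof must show that $\sigma$ and $\sigma\ast\sigma$ are mutually singular: $U_{T\times T}$ on $L^2_0(X\times X)$ is $U_T\oplus U_T\oplus (U_T\otimes U_T)|_{L^2_0\otimes L^2_0}$, so on any common non-null component of $\sigma$ and $\sigma\ast\sigma$ the multiplicity would exceed $2$, contradicting homogeneity. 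Your proposal never addresses this disjointness; it is one of the two main points of Ryzhikov's paper and again uses $a\in(0,1)$. (iii) The two remaining hard steps are only asserted: simplicity on the antisymmetric part is not a ``parallel argument'' --- applying polynomials in $U_{T\times T}$ and $R$ to $v_-$ yields sums such as $\bigl(U_T^{j+1}f\otimes f-f\otimes U_T^{j+1}f\bigr)+\bigl(U_Tf\otimes U_T^jf-U_T^jf\otimes U_Tf\bigr)$, and separating the individual antisymmetrized tensors needs an additional argument --- and the equivalence (not mere absolute continuity) of the maximal spectral types of the two parts, which you explicitly defer to ``Ryzhikov's original proof'', is exactly where the hypothesis enters in an essential way. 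So the skeleton is the right one, but as written the argument does not yet prove the lemma.
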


\subsection{Disjointness of convolutions}
In this section we study the convolutions of the maximal spectral type $\sigma$. Therefore, we state the definition of a \emph{convolution of measures}:
\begin{definition}
Let $G$ be a topological group and $\mu$, $\nu$ finite Borel measures on $G$. Then their convolution $\mu \ast \nu$ is defined by
\begin{equation*}
\left(\mu \ast \nu\right)\left(A\right) = \int\int 1_A\left(x\cdot y\right) \:d\mu(x)\: d\nu(y)
\end{equation*}
for each measurable set $A$ of $G$.
\end{definition}
If all the convolutions $\sigma^k = \sigma \ast ... \ast \sigma$ for $k \in \mathbb{N}$ are pairwise mutually singular, one speaks about \emph{disjointness of convolutions}. To guarantee this pairwise singularity of convolutions of the maximal spectral type of a measure-preserving transformation the following property is useful:
\begin{definition}
An automorphism $T$ of a Lebesgue space $\left(X,\mu\right)$ is said to be \emph{$\kappa$-weakly mixing}, $\kappa \in \left[0,1\right]$, if there exists a strictly increasing sequence $\left(k_n\right)_{n \in \mathbb{N}}$ of natural numbers such that the weak convergence
\begin{equation*}
U^{k_n}_T \longrightarrow_{w} \left(\kappa \cdot P_c + \left(1- \kappa\right) \cdot Id\right)
\end{equation*}
holds, where $P_c$ is the projection on the subspace of constants.
\end{definition}
\begin{remark} \label{rem:geometric}
By \cite[Proposition 3.1.]{St} we can characterise this property in geometric language: A transformation $T$ is $\kappa$-weakly mixing if and only if there is an increasing sequence $\left(k_n\right)_{n \in \mathbb{N}}$ of natural numbers such that for all measurable sets $A$ and $B$ 
\begin{equation*}
\lim_{n \rightarrow \infty} \mu\left(A \cap T^{k_n}B\right) = \kappa \cdot \mu\left(A\right) \cdot \mu\left(B\right) + \left(1-\kappa\right) \cdot \mu\left(A \cap B\right).
\end{equation*}
We recognize that $0$-weak mixing corresponds to rigidity and $1$-weak mixing to the usual notion of weak mixing.
\end{remark}
As announced this property has connections with certain properties of the maximal spectral type (see \cite[Theorem 1]{St}):
\begin{lemma} \label{lem:kappa}
If the transformation $T$ is $\kappa$-weakly mixing for some $0<\kappa <1$ and $\sigma$ is the maximal spectral type for $U_T|_{L^2_0\left(X,\mu\right)}$, then $\sigma$ and all its convolutions $\sigma^k = \sigma \ast ... \ast \sigma$ are pairwise mutually singular.
\end{lemma}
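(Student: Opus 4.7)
The plan is to convert the $\kappa$-weak mixing hypothesis into a weak-convergence statement for the maximal spectral measure on $\mathbb{S}^1$, propagate it through all convolutions, and then use the resulting asymptotic ``eigenvalue'' $(1-\kappa)^k$ to separate $\sigma^k$ from $\sigma^m$ when $k\neq m$.

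First, I restrict the defining convergence $U^{k_n}_T \to \kappa P_c + (1-\kappa)\mathrm{Id}$ to $L^2_0(X,\mu)$, on which $P_c$ vanishes, to obtain $U^{k_n}_T \to (1-\kappa)\mathrm{Id}$ weakly. By the spectral theorem, picking $\sigma=\sigma_{f_1}$ as a concrete representative of the maximal spectral type, $U_T|_{Z(f_1)}$ is unitarily equivalent to multiplication by $z$ on $L^2(\mathbb{S}^1,\sigma)$, so for every bounded Borel $\phi$ on $\mathbb{S}^1$,
$$\int_{\mathbb{S}^1} z^{k_n}\phi(z)\, d\sigma(z) \longrightarrow (1-\kappa)\int_{\mathbb{S}^1}\phi(z)\, d\sigma(z).$$

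Next, I propagate this convergence to every convolution. Since $\sigma^k$ is the push-forward of $\sigma^{\otimes k}$ on $(\mathbb{S}^1)^k$ under the multiplication map $(x_1,\ldots,x_k)\mapsto x_1\cdots x_k$, for any bounded Borel $\phi$,
$$\int_{\mathbb{S}^1} z^{k_n}\phi(z)\, d\sigma^k(z) = \int_{(\mathbb{S}^1)^k} x_1^{k_n}\cdots x_k^{k_n}\,\phi(x_1\cdots x_k)\, d\sigma(x_1)\cdots d\sigma(x_k).$$
Applying the first-step convergence in one coordinate at a time, combined with Fubini and dominated convergence (the successive inner integrals are uniformly bounded by $\|\phi\|_\infty\,\sigma(\mathbb{S}^1)^{k-1}$), yields by induction on $k$ that
$$\int_{\mathbb{S}^1} z^{k_n}\phi(z)\, d\sigma^k(z) \longrightarrow (1-\kappa)^k \int_{\mathbb{S}^1} \phi(z)\, d\sigma^k(z)$$
for every bounded Borel $\phi$ and every $k\in\mathbb{N}$.

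Finally, suppose for contradiction that $\sigma^k$ and $\sigma^m$ are not mutually singular for some $k\neq m$. Then the common part $\tau := \sigma^k \wedge \sigma^m$ is a nonzero finite Borel measure which is absolutely continuous with respect to both, with bounded densities $g=d\tau/d\sigma^k$ and $h=d\tau/d\sigma^m$ of $L^\infty$-norm at most $1$. Applying the previous step once with $\phi=g$ and once with $\phi=h$,
$$\int_{\mathbb{S}^1} z^{k_n}\, d\tau \longrightarrow (1-\kappa)^k\,\tau(\mathbb{S}^1) \quad\text{and}\quad \int_{\mathbb{S}^1} z^{k_n}\, d\tau \longrightarrow (1-\kappa)^m\,\tau(\mathbb{S}^1).$$
Since $\tau(\mathbb{S}^1)>0$ and $0<1-\kappa<1$, this forces $(1-\kappa)^k=(1-\kappa)^m$ and hence $k=m$, the desired contradiction. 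The main technical point is the inductive propagation through convolutions in the middle step; everything else is a routine application of the Lebesgue decomposition together with the spectral theorem.
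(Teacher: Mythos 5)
Your proposal is correct. Note, however, that the paper itself does not prove this lemma at all: it is quoted as Stepin's theorem (\cite[Theorem 1]{St}), so your argument is a self-contained reconstruction of the classical proof rather than a parallel to anything in the text. The structure you use is exactly the right one: restricting the $\kappa$-weak mixing limit to $L^2_0$ gives $z^{k_n}\,d\sigma \to (1-\kappa)\,d\sigma$ tested against all bounded Borel functions (via the cyclic subspace $Z(f_1)\cong L^2(\mathbb{S}^1,\sigma)$), convolution powers then acquire the ``generalized eigenvalue'' $(1-\kappa)^k$, and since $0<1-\kappa<1$ the values $(1-\kappa)^k$ are pairwise distinct, so a nonzero common part $\tau=\sigma^k\wedge\sigma^m$ (which indeed has densities bounded by $1$ with respect to both measures) would force $k=m$. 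The only step stated too tersely is the inductive propagation: for fixed outer variable $v$ the inner integrand $u^{k_n}\phi(uv)$ and the outer factor $v^{k_n}$ both depend on $n$, so one cannot literally apply the one-variable limit ``coordinate by coordinate'' in one stroke. The fix is the standard split you implicitly invoke: with $F_n(v)=\int u^{k_n}\phi(uv)\,d\sigma^{k-1}(u)$ and $F=\lim F_n$, write
\begin{equation*}
\int v^{k_n}F_n(v)\,d\sigma(v)=\int v^{k_n}F(v)\,d\sigma(v)+\int v^{k_n}\bigl(F_n(v)-F(v)\bigr)\,d\sigma(v),
\end{equation*}
where the first term converges by the base case applied to the fixed bounded Borel function $F$ and the second tends to $0$ by dominated convergence, since $F_n\to F$ pointwise with the uniform bound you note. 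With that sentence made explicit, the proof is complete and is essentially the argument behind Stepin's theorem that the paper cites.
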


\section{Criterion for the proof of Theorem \ref{main}}

\subsection{Statement of the criterion}
As discussed in \cite[section 9]{Ku-Dc} we can reduce the proof of the Theorem to Proposition \ref{prop:red}. 
\begin{proposition} \label{prop:red}
Let $f \in \text{Diff}^{\omega}_{\rho}\left( \T^d, \mu \right)$ admit a good linked approximation of type $\left(h,h+1\right)$ and a good cyclic approximation. Then $f$ has a maximal spectral type disjoint with its convolutions and homogeneous spectrum of multiplicity $2$ for its ergodic Cartesian square $f \times f$
\end{proposition}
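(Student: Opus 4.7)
The strategy is to assemble the conclusion directly from the spectral toolkit already gathered in Sections \ref{section:theory} and \ref{subsection:specm}. The hypotheses deliver exactly two ingredients: a good cyclic approximation (to produce ergodicity and simplicity) and a good linked approximation of type $(h,h+1)$ (to produce a family of weak-operator limit relations feeding into Ryzhikov's and Stepin's lemmas). Everything else is combination.

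First, Lemma \ref{lem:erg} turns the good cyclic approximation into ergodicity of $f$. Since the speed $o(1/h)$ is eventually smaller than $\theta/h$ for any fixed $\theta<1/2$, Lemma \ref{lem:simp} gives that $U_f$ has simple spectrum. Coupling ergodicity with the good $(h,h+1)$-approximation, Lemma \ref{lem:wm} produces continuous spectrum; in particular $f$ is weakly mixing, so that $f\times f$ is ergodic.

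Next, Remark \ref{rem:bem1} translates the good linked approximation into the weak-operator convergence
\[
U_f^{k(h_n+1)} \longrightarrow_{w} r \cdot U_f^k + (1-r)\cdot \mathrm{Id}
\]
as $n\to\infty$, valid for every $k\in\mathbb{N}$ with a single $r\in(0,1)$. Specializing to $k=1$ and feeding this together with the already-established simple spectrum into Lemma \ref{lem:ryz} (with $a=r$) immediately yields homogeneous spectrum of multiplicity $2$ for $f\times f$.

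The remaining point is disjointness of convolutions, for which I would show $f$ is $r$-weakly mixing in the sense of Lemma \ref{lem:kappa}. Weak mixing supplies a sequence $k_j\to\infty$ with $U_f^{k_j}\to P_c$ weakly; for each $j$ I choose $n_j$ so large that $U_f^{k_j(h_{n_j}+1)}$ lies within $1/j$ of $r\cdot U_f^{k_j}+(1-r)\cdot\mathrm{Id}$ in a metric generating the weak operator topology on the unit ball of the separable Hilbert space $L^2_0(\T^d,\mu)$. Letting $j\to\infty$ along $m_j := k_j(h_{n_j}+1)$ yields $U_f^{m_j}\to r P_c + (1-r)\cdot\mathrm{Id}$ weakly, which is precisely $r$-weak mixing with $r\in(0,1)$; Lemma \ref{lem:kappa} then closes out the proof. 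The only step beyond quoting is this diagonal extraction, which is the mild obstacle; everything else is bookkeeping.
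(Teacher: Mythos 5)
Your proposal is correct and follows essentially the same route as the paper: simple spectrum via Lemma \ref{lem:simp} plus Remark \ref{rem:bem1} feeding Lemma \ref{lem:ryz} for the homogeneous multiplicity $2$, and ergodicity (Lemma \ref{lem:erg}), weak mixing (Lemma \ref{lem:wm}), Remark \ref{rem:geometric} and Lemma \ref{lem:kappa} for disjointness of convolutions. Your diagonal extraction along $m_j=k_j(h_{n_j}+1)$ merely makes explicit the step the paper states implicitly when it passes from $U_f^{k_n}\rightarrow_w P_c$ to $U_f^{k_n(h+1)}\rightarrow_w r\cdot P_c+(1-r)\cdot\mathrm{Id}$, so it is the same argument.
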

\begin{proof}
Indeed, such a constructed diffeomorphism has the following properties:
\begin{itemize}
	\item Since $f$ allows a good cylic approximation, it has simple spectrum by Lemma \ref{lem:simp}. Furthermore, $f$ admits a good linked approximation of type $\left(h,h+1\right)$ and so we can use Remark \ref{rem:bem1} to obtain the weak convergence $U^{h+1}_f \longrightarrow_{w} r \cdot U_f + \left(1-r\right) \cdot Id$ for some $r \in \left(0,1\right)$. Hence, we can apply Lemma \ref{lem:ryz} and conclude that $f \times f$ has homogeneous spectrum of multiplicity $2$.
	\item With the aid of Lemma \ref{lem:erg} and the good cyclic approximation of $f$ we have that $f$ is ergodic. Then we can exploit the good approximation of type $\left(h,h+1\right)$ and Lemma \ref{lem:wm} to see that $f$ is even weakly mixing (hence, $f \times f$ is ergodic). Due to Remark \ref{rem:geometric} there exists a strictly increasing sequence $\left(k_n\right)_{n \in \mathbb{N}}$ of natural numbers such that the convergence $U^{k_n}_f \rightarrow_{w} P_c$ holds in the weak operator topology as $n\rightarrow \infty$. Along this sequence we have using Remark \ref{rem:bem1}
	\begin{equation*}
	U^{k_n \cdot \left(h+1\right)}_f \longrightarrow_w \left(r \cdot P_c + \left(1-r\right) \cdot Id\right)
	\end{equation*}
	for some $r \in \left(0,1\right)$. Thus, $f$ is $\kappa$-weakly mixing (with $\kappa = r \in \left(0,1\right)$). Then the maximal spectral type $\sigma$ of $f$ is disjoint with its convolutions by Lemma \ref{lem:kappa}.
\end{itemize}
\end{proof}

\subsection{Sketch of the construction}
As announced we would like to sketch the construction of $T_n = H^{-1}_n \circ \phi^{\a_{n+1}} \circ H_n$ with $\a_{n+1} = \a_n + \frac{1}{k_n \cdot l_n \cdot q^2_n}$ and its combinatorics satisfying the requirements of Proposition \ref{prop:red}. \\
First of all, we define $m_n \coloneqq  \frac{q_{n+1}}{2 \cdot q^2_n}$. Additionally, we introduce two sets $\tilde{c}^{(n)}_{0,j}$, $j=1,2$, in section \ref{subsection:tower1}. With the aid of these we define the bases $c^{(n)}_{0,j}= H^{-1}_{n} \left(\tilde{c}^{(n)}_{0,j} \right)$ as well as levels $T^i_n\left( c^{(n)}_{0,j}\right) = H^{-1}_n \circ \phi^{i \cdot \a_{n+1}} \left(\tilde{c}^{(n)}_{0,j} \right)$ of the two towers of heights $m_n$ and $m_n+1$ respectively. These towers will be used to prove that $T= \lim_{n \rightarrow \infty} T_n$ admits a good linked approximation of type $(h,h+1)$. In particular, $\tilde{c}^{(n)}_{0,j}$ is built as a union of sets due to different reasons which we will explain in the following. First of all, the union over $i_2, \dots, i_d$ is taken such that $\tilde{c}^{(n)}_{0,j}$ has almost full length in the $x_2, \dots , x_d$-coordinates and at the same time is positioned in domains, where we can approximate our block slide type of map with a real-analytic volume-preserving diffeomorphism very well. This will be very useful in the calculations of the speed of approximation, especially in the proof of Lemma \ref{lem:error2}. In order to see that $T$ admits an approximation of type $(h,h+1)$ the so-called ``$i_1$-stripes'', i.\,e. sets obtained by the union over $i_2,\dots,i_d$, are very important. These stripes are depicted in the schematic visualisation of the tower bases in Figure \ref{fig:t}. In this connection we note that the number $m_n$ is defined such that $m_n \cdot \alpha_{n+1} = \frac{r_n}{q_n}+\frac{1}{2q^2_n}$ with some $r_n \in \N$. Hence, under $\phi^{m_n \cdot \alpha_{n+1}}$ a ``$i_1$-stripe'' is mapped into another $\frac{1}{q_n}$-sector and is shifted by $\frac{1}{2q^2_n}$. In order to get recurrence in the base we need another stripe to be positioned there, which will be fulfilled by our choice of the term $\frac{i_1 \cdot r_n}{q_n}$ and the union over $i_1$ in the definition of $\tilde{c}^{(n)}_{0,1}$. Similarly, we proceed to define $\tilde{c}^{(n)}_{0,2}$. So the sets $\tilde{c}^{(n)}_{0,j}$ will be defined in such a way that a great portion of $\phi^{m_n \cdot \alpha_{n+1}}\left(\tilde{c}^{(n)}_{0,1}\right)$ (resp. $\phi^{\left(m_n+1\right) \cdot \alpha_{n+1}} \left(\tilde{c}^{(n)}_{0,2}\right)$) is mapped back into $\tilde{c}^{(n)}_{0,1}$ (resp. $\tilde{c}^{(n)}_{0,2}$) for the prescribed height $m_n$ (resp. $m_n+1$) of the particular tower. Since both towers have to be substantial, the measure of each base element has to be about $\frac{1}{2m_n}$. Additionally, the tower levels have to be disjoint sets. We ensure this by taking a $x_1$-width of $\frac{q_n}{q_{n+1}}$ (note that under $\phi^{\alpha_{n+1}}$ we return to the same $\frac{1}{q_n}$-section after $q_n$ steps and then we have a $x_1$-deviation of $q_n \cdot \left(\a_{n+1}-\a_n \right)= \frac{q_n}{q_{n+1}}$). Moreover, the conjugation map $h_n$ will be defined such that each image $h^{-1}_n\left(\tilde{c}^{(n)}_{0,j}\right)$ is contained in a cube of edge length $\frac{1}{l_n}$. Hereby, the diameter of $c^{(n)}_{0,j}$ is small due to condition \ref{eq:l}. In Figure \ref{fig:h} we sketch this action of the conjugation map $h_n$. \\
The proof that $T$ admits a good cyclic approximation is considerably easier. Since $T^{q_{n+1}}_n = \text{id}$ we can use a subset with measure about $\frac{1}{q_{n+1}}$ of our tower base in the $(h,h+1)$-approximation.

\section{Explicit constructions}
In this section, we present the construction of the conjugation map $h_n$. We start by describing the combinatorial picture in subsection \ref{subsec:descrip}. In order to find a real-analytic conjugation map in subsection \ref{subsec:constr}, we recall the result from \cite[section 5.1]{Ba-Ku} that a permutation $\Pi$ of the partition 
\begin{align}
\mathcal{S}_{kq,l}:=\{S_{i,j}^{kq,l}:=[i/(kq),(i+1)/(kq))\times[j/l,(j+1)/l), 0\leq i< kq, 0\leq j< l\}
\end{align}
which commutes with $\phi^{1/q}$ is a block slide type of map.

\subsection{Description of the required combinatorics} \label{subsec:descrip}

\begin{figure}
    \centering
    \includegraphics[width=15cm, height=13.5cm, trim={1.5cm 6.5cm 6cm 2.15cm}, clip]{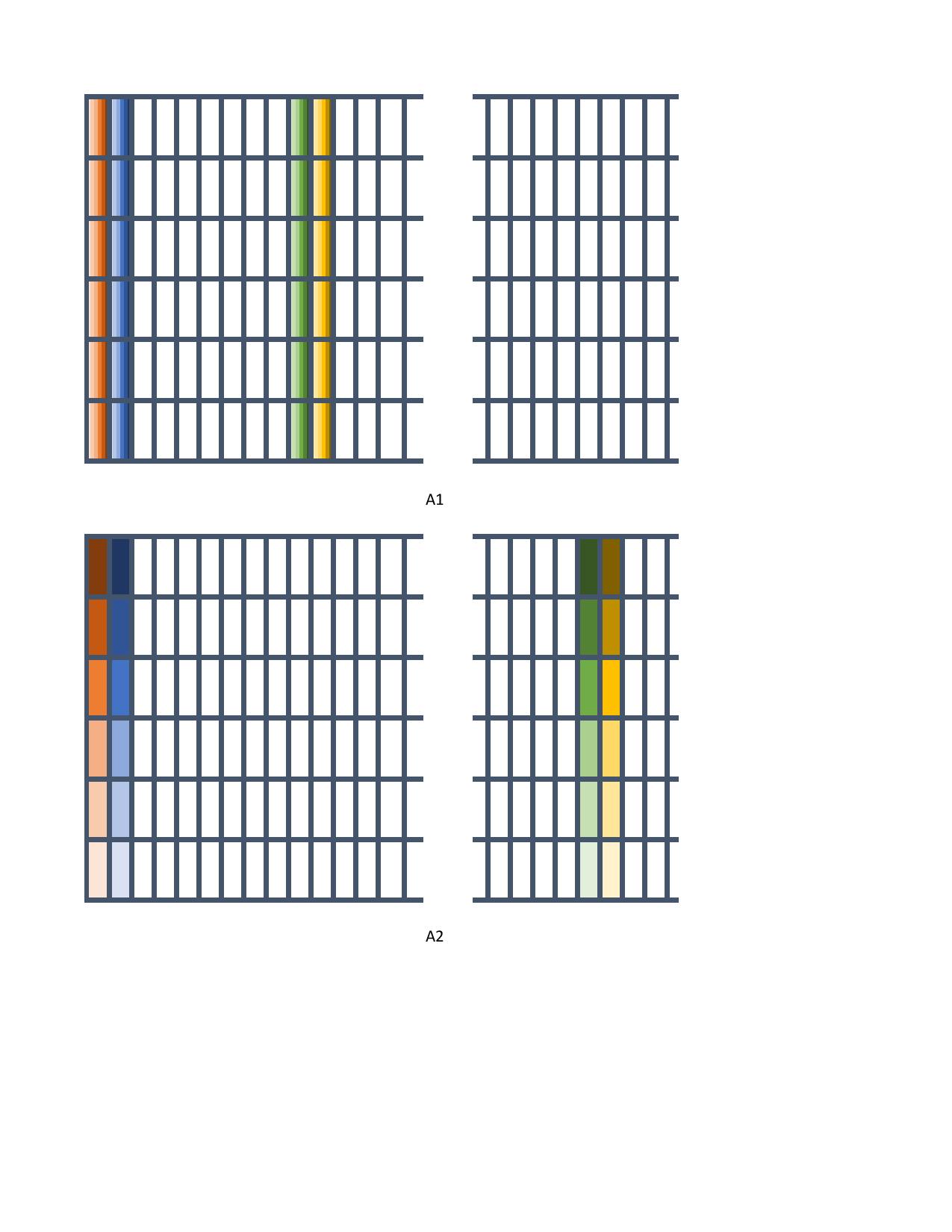}
    \caption{Illustration of the action of $\mathfrak{h}_{l,p,q,r}$ taking A1 to A2. It is drawn with $p=1, q=3, l=6, k=2$. Here $\T^2$ is partitioned into rectangles of horizontal length $1/(2l^2q^2)$ and vertical length $1/l$ and is colored with a shade of a certain color. For example the one on the left of A1 is colored with a light shade of red, and it progressively gets darker as we move to the right. Note that in A1, all the rectangles are colored by the same shade and color as we move vertically. As we move $6$ blocks horizontally to the right, one for each $f, 0\leq f<6$, the color changes from red to a shade of blue and the same pattern repeats. Under the action of $\mathfrak{h}_{l,p,q,r}$, A1 gets mapped to A2 and, where the rectangles gets rearranged so the horizontal progression to the right becomes a vertical progression to top, and hence the shade changes likewise. Pattern repeats for blue.
    Note that the green and and the yellow rectangles corresponds to $e\geq q$ and hence the behaviour changes, in fact they do not get mapped to the rectangle corresponding to the same $a$, rather to the rectangle corresponding to $a+e(r+p)$. However the pattern remains the same. Because of the larger size, we show this phenomenon with a break in the torus.}
    \label{fig:h}
\end{figure}

We describe the underlying combinatorics of our construction. For this purpose, we recall our choice of the rotation number
\begin{equation} \label{alpha}
  \a_{n+1} = \a_n + \frac{1}{l_n \cdot k_n \cdot q^2_n}. 
\end{equation}
Additionally, we define the number
\begin{equation}
    m_n = \frac{q_{n+1}}{2q^2_n}=\frac{l_n \cdot k_n}{2}.
\end{equation}
By equations \ref{alpha} and \ref{eq:leven} we have $m_n \in \N$. In fact, $m_n$ will be the height of the first tower and $m_n +1$ the height of the second one. We put
\begin{equation*}
    r_n \coloneqq m_n \cdot p_n \mod q_n.
\end{equation*}

Altogether we calculate
\begin{align}
m_n \cdot \a_{n+1} & = \frac{r_n}{q_n} + \frac{1}{2q^2_n}, \label{a1} \\
\left( m_n +1 \right) \cdot \a_{n+1} & = \frac{r_n + p_n}{q_n} + \frac{1}{2q^2_n}+\frac{1}{q_{n+1}}. \label{a2}
\end{align}

In the following, we consider the integers $0 \leq a < q$, $0\leq b < 2q$, $0 \leq c < \frac{l}{2q}$ (we recall from equation \ref{eq:leven} that $ \frac{l}{2q} \in \N$ in case of our constructions), $0 \leq e < 2q$, $0 \leq f < l$ and $0 \leq t_i < l$ for $i=1,\dots, d-2$. Under the map $\mathfrak{h}_{l,p,q,r}$ (see figure \ref{fig:h}) the set
\begin{equation*}
\begin{split}
  A_{a,b,c,t_1,\dots,t_{d-2}, e,f,j} \coloneqq  & \Bigg[ \frac{a}{q} + \frac{b}{2q^2}+\frac{c}{lq} + \frac {t_1}{l^2q} + \dots + \frac{t_{d-2}}{l^{d-1}q}+\frac{e}{2l^{d-1}q^2}+\frac{f}{2l^dq^2}, \\
  & \quad \frac{a}{q} + \frac{b}{2q^2}+\frac{c}{lq} + \frac {t_1}{l^2q} + \dots + \frac{t_{d-2}}{l^{d-1}q}+\frac{e}{2l^{d-1}q^2}+\frac{f+1}{2l^dq^2} \Bigg] \\
     & \times\left[ \frac{j}{l}, \frac{j+1}{l} \right] 
\end{split}\end{equation*}
is mapped to
\begin{equation*}
\begin{split}
    & \Bigg[ \frac{a+e \cdot r}{q} + \frac{e}{2q^2}+\frac{c}{lq} + \frac {t_1}{l^2q} + \dots + \frac{t_{d-2}}{l^{d-1}q}+\frac{b}{2l^{d-1}q^2}+\frac{j}{2l^dq^2}, \\
    & \quad  \frac{a+e \cdot r}{q} + \frac{e}{2q^2}+\frac{c}{lq} + \frac {t_1}{l^2q} + \dots + \frac{t_{d-2}}{l^{d-1}q}+\frac{b}{2l^{d-1}q^2}+\frac{j+1}{2l^dq^2} \Bigg] \times \left[ \frac{f}{l}, \frac{f+1}{l} \right] 
\end{split} 
\end{equation*}
in case of $0\leq e < q$ and to
\begin{equation*}
\begin{split}
    & \Bigg[ \frac{a+e \cdot (r+p)}{q} + \frac{e}{2q^2}+\frac{c}{lq} + \frac {t_1}{l^2q} + \dots + \frac{t_{d-2}}{l^{d-1}q}+\frac{b}{2l^{d-1}q^2}+\frac{j}{2l^dq^2}, \\
    & \quad  \frac{a+e \cdot (r+p)}{q} + \frac{e}{2q^2}+\frac{c}{lq} + \frac {t_1}{l^2q} + \dots + \frac{t_{d-2}}{l^{d-1}q}+\frac{b}{2l^{d-1}q^2}+\frac{j+1}{2l^dq^2} \Bigg] 
    \times \left[ \frac{f}{l}, \frac{f+1}{l} \right] 
\end{split} 
\end{equation*}
in case of $q \leq e < 2q$.

\begin{remark} \label{remark combi}
We note several important properties of the map $\mathfrak{h}_{l,p,q,r}$:
\begin{enumerate}
\item $\mathfrak{h}_{l,p,q,r} \circ \phi^{\frac{1}{q}} = \phi^{\frac{1}{q}} \circ \mathfrak{h}_{l,p,q,r}$.
\item In case of $0 \leq e < q$ we have
\begin{equation*} \begin{split}
    & \mathfrak{h}_{l,p,q,r} \left( \bigcup^{l-1}_{f=0} A_{a,b,c,t_1,\dots,t_{d-2},e,f,j} \right) \\
    = & \Bigg[ \frac{a+e \cdot r}{q} + \frac{e}{2q^2}+\frac{c}{lq} + \frac {t_1}{l^2q} + \dots + \frac{t_{d-2}}{l^{d-1}q}+\frac{b}{2l^{d-1}q^2}+\frac{j}{2l^dq^2}, \\
    & \quad \frac{a+e \cdot r}{q} + \frac{e}{2q^2}+\frac{c}{lq} + \frac {t_1}{l^2q} + \dots + \frac{t_{d-2}}{l^{d-1}q} +\frac{b}{2l^{d-1}q^2}+\frac{j+1}{2l^dq^2} \Bigg] \times \left[ 0,1 \right] 
\end{split}\end{equation*}
and in case of $q \leq e < 2q$ we have 
\begin{equation*} \begin{split}
    & \mathfrak{h}_{l,p,q,r} \left( \bigcup^{l-1}_{f=0} A_{a,b,c,t_1,\dots,t_{d-2},e,f,j} \right) \\
    = & \Bigg[ \frac{a+e \cdot (r+p)}{q} + \frac{e}{2q^2}+\frac{c}{lq} + \frac {t_1}{l^2q} + \dots + \frac{t_{d-2}}{l^{d-1}q}+\frac{b}{2l^{d-1}q^2}+\frac{j}{2l^dq^2}, \\
    & \quad \frac{a+e \cdot (r+p)}{q} + \frac{e}{2q^2}+\frac{c}{lq} + \frac {t_1}{l^2q} + \dots + \frac{t_{d-2}}{l^{d-1}q} +\frac{b}{2l^{d-1}q^2}+\frac{j+1}{2l^dq^2} \Bigg] \times \left[ 0,1 \right] 
\end{split}\end{equation*}
\item For all integers $0 \leq a < q$, , $0 \leq c < \frac{l}{2q}$, $0 \leq e < 2q$, $0 \leq f < l$ and $0 \leq t_i < l$ for $i=1,\dots, d-2$ we have in case of $0\leq b < q-1$ and $0 \leq e < q-1$
\begin{align*}
     \phi^{m_n \cdot \a_{n+1}} \circ \mathfrak{h}_{l_n,p_n,q_n,r_n} \left( A_{a,b,c,t_1, \dots,t_{d-2},e,f,j} \right) & = \phi^{m_n \cdot \a_{n+1}} \left( A_{a+e \cdot r_n, e, c,t_1, \dots,t_{d-2},b,j,f} \right) \\
     & = A_{a+(e+1) \cdot r_n, e+1, c,t_1, \dots,t_{d-2},b,j,f} \\
     & = \mathfrak{h}_{l_n,p_n,q_n,r_n}\left( A_{a,b,c,t_1, \dots,t_{d-2},e+1,f,j} \right)
\end{align*}
and in case of $q\leq b < 2q-1$ and $q \leq e < 2q-1$
\begin{align*}
     & \phi^{\left( m_n + 1 \right) \cdot \a_{n+1} - \frac{1}{q_{n+1}}} \circ \mathfrak{h}_{l_n,p_n,q_n,r_n} \left( A_{a,b,c,t_1, \dots,t_{d-2},e,f,j} \right) \\
     = & \phi^{\left( m_n + 1 \right) \cdot \a_{n+1} - \frac{1}{q_{n+1}}} \left( A_{a+e \cdot \left(r_n + p_n \right), e, c,t_1, \dots,t_{d-2},b,j,f} \right) \\
     = & A_{a+(e+1) \cdot \left(r_n + p_n \right), e+1, c,t_1, \dots,t_{d-2},b,j,f} \\
     = & \mathfrak{h}_{l_n,p_n,q_n,r_n}\left( A_{a,b,c,t_1, \dots,t_{d-2},e+1,f,j} \right).
\end{align*}
\end{enumerate}
\end{remark}

\subsection{Construction of the conjugation maps} \label{subsec:constr}
In \cite[section 5.1]{Ba-Ku} we proved the subsequent statement on the approximation of arbitrary permutations.

\begin{proposition}[\cite{Ba-Ku}, Theorem E] \label{permutation = block-slide}
Let $k,q,l \in \N$ and $\Pi$ be any permutation of $kql$ elements. We can naturally consider $\Pi$ to be a permutation of the partition $\mathcal{S}_{kq,l}$ of the torus $\T^2$. Assume that $\Pi$ commutes with $\phi^{1/q}$. Then $\Pi$ is a block-slide type of map.
\end{proposition}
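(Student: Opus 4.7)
The plan is to exhibit $\Pi$ as a finite composition of elementary block-slide type maps from two families, each of which manifestly respects $\phi^{1/q}$-commutation. First, \emph{horizontal slides} $\mathfrak{r}_s(x_1,x_2) = (x_1 + s(x_2),x_2)$ with $s$ a step function constant on each row-interval $[j/l,(j+1)/l)$; since $\phi^{1/q}$ acts only on the $x_1$-coordinate, any such $\mathfrak{r}_s$ commutes with $\phi^{1/q}$ automatically. Second, \emph{vertical slides} $\mathfrak{c}_t(x_1,x_2) = (x_1,x_2 + t(x_1))$ with $t$ a step function constant on each column-interval $[i/(kq),(i+1)/(kq))$ and satisfying the $1/q$-periodicity $t(x_1 + 1/q) = t(x_1)$; this periodicity is precisely the condition for $\mathfrak{c}_t$ to commute with $\phi^{1/q}$, and it forces identical vertical shifts on all $q$ columns in each $\phi^{1/q}$-orbit. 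Both families are block-slide type maps in the sense of the paper, and so is any finite composition.

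Next I would show that every $\phi^{1/q}$-equivariant permutation of $\mathcal{S}_{kq,l}$ lies in the group generated by the $\mathfrak{r}_s$'s and $\mathfrak{c}_t$'s. The natural reduction is to decompose $\Pi$ into a product of \emph{equivariant transpositions} and pure cyclic rotations of single $\phi^{1/q}$-orbits. Cyclic rotations of a single orbit are realized trivially by a constant vertical slide, so the main task is to realize a single equivariant transposition (a simultaneous swap of two $\phi^{1/q}$-orbits, fixing all others) as a word in $\mathfrak{r}_s$ and $\mathfrak{c}_t$. For this I would use a conjugation gadget of the form $A^{-1}\circ B\circ A$: an auxiliary sequence $A$ of horizontal and vertical slides first brings the two target orbits into a convenient relative configuration (for instance, aligning them within a common column-orbit or row), $B$ is a primitive slide that realizes the swap in that configuration, and $A^{-1}$ restores every spectator atom to its original cell.

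The principal technical obstacle is to design $A^{-1}\circ B\circ A$ so that it genuinely fixes every atom outside the two orbits being swapped while remaining $\phi^{1/q}$-equivariant at every intermediate stage. The equivariance is maintained uniformly by insisting that every vertical slide appearing in $A$ or $B$ use a $1/q$-periodic step function, so each partial composition commutes with $\phi^{1/q}$. The fixed-point property on spectators follows from the conjugation structure together with the fact that $B$ is chosen to act nontrivially only on the two target orbits: any horizontal or vertical displacement of a spectator produced by $A$ is then exactly reversed by $A^{-1}$. Concatenating the gadgets associated to each equivariant transposition in a fixed factorization of $\Pi$ produces the desired block-slide type realization, using at most $O(kql)$ elementary slides in total.
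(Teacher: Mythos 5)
Your choice of generators and the reduction of $\Pi$ to equivariant transpositions (plus rotations inside single orbits) has the right general shape, but the heart of the argument is missing, and the specific gadget you propose provably cannot work. None of your elementary slides can play the role of the ``primitive slide $B$ that realizes the swap'': a horizontal slide translates every coarse row it touches rigidly, and a $1/q$-periodic vertical slide translates every affected column strip rigidly, so on its support each generator acts as a \emph{rotation} of an entire row or column strip, never as a transposition of two atoms fixing the rest. Conjugation does not repair this. For $A^{-1}\circ B\circ A$ to fix a spectator atom $c$ you need $B$ to fix $A(c)$, i.e.\ $A$ must move \emph{every} spectator off the support of $B$; but that support is a union of full coarse rows or full column strips, hence has measure at least $1/l$ (resp.\ at least $1/k$, using the $1/q$-periodicity), while the two target orbits have total measure only $2/(kl)$. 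Since $A$ preserves measure, for $l>2$ (resp.\ $k>2$) some spectators must occupy part of the support of $B$, are displaced by $B$, and are therefore not restored by $A^{-1}$. The same difficulty already occurs for your ``trivial'' kernel elements: a rotation of a single $\phi^{1/q}$-orbit (which consists of $q$ atoms lying in one row) is \emph{not} given by a constant vertical slide --- a vertical slide rotates whole column strips and a horizontal slide moves the whole row --- so these too require a nontrivial gadget that you have not supplied.

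There is also an algebraic obstruction showing that your bookkeeping (``restores every spectator atom to its original cell''), i.e.\ working only with slides that move the atoms of $\mathcal{S}_{kq,l}$ rigidly by whole cells, cannot succeed in general. Each such slide acts on the $kq\times l$ grid of atoms as a product of cyclic shifts of rows of length $kq$ or of columns of length $l$; when $k,q,l$ are all odd every such generator is an even permutation, whereas an equivariant transposition consists of $q$ disjoint transpositions and is odd. Hence the sign is an invariant that your generating set cannot break. This is precisely why the proof being cited (Theorem E of \cite{Ba-Ku}; see the remark immediately after the proposition) passes to the \emph{finer} partition $\mathcal{S}_{kq,2l}$: atoms are cut into half-blocks, and an explicit finite sequence of slides adapted to the finer grid exchanges the two atoms of an orbit pair (and rotates a single orbit) while returning everything else to its place; the intermediate maps do not preserve $\mathcal{S}_{kq,l}$, which is exactly how the parity obstruction is circumvented. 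Your proposal asserts the existence of this swap gadget but never constructs it, and that construction is the entire content of the proposition.
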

We point out that the result is proven by going to a finer partition $\mathcal{S}_{kq,2l}$. This will play a role in the definition of ``good domains'' of our conjugation map in Definition \ref{dfn:goodarea}. \\

Since we know how to find real-analytic approximations of block-slide type of maps due to Proposition \ref{proposition approximation}, this result will be very useful in order to find a conjugation map providing the combinatorics from the previous subsection. In fact, we find a real-analytic diffeomorphism $h_{2, n}$ $\left(\varepsilon_n, \delta_n\right)$-close to $\mathfrak{h}_{l_n,p_n,q_n, r_n}$ involving the $x_1-x_2$-coordinates for our choices of $\delta_n =\frac{1}{n \cdot q_n}$ and $\varepsilon_n = \frac{\delta_n}{4d \cdot 2l^d_nq^2_n}$. 

\subsubsection*{The higher dimensional case}
In order to deal with higher dimensions, we consider the following step functions:
\begin{align}
&\psi_{i,l,q}^{(\mathfrak{1})}:[0,1)\to \R &\text{ defined by}\quad &\psi_{i,l,q}^{(\mathfrak{1})}(x)=\sum_{j=1}^{l-1}\frac{l-j}{l^{d+2-i}q}\chi_{[\frac{j}{l},\frac{j+1}{l}]}(x),\nonumber\\
&\psi_{i,l,q}^{(\mathfrak{2})}:[0,1)\to \R &\text{ defined by}\quad &\psi_{i,l,q}^{(\mathfrak{2})}(x)=\sum_{j=0}^{l^{d+2-i}q-1}\left(\frac{j}{l}-\Big\lfloor\frac{j}{l}\Big\rfloor \right)\chi_{[\frac{j}{l^{d+2-i}q},\frac{j+1}{l^{d+2-i}q}]}(x),\nonumber\\
&\psi_{i,l,q}^{(\mathfrak{3})}:[0,1)\to \R &\text{ defined by}\quad &\psi_{i,l,q}^{(\mathfrak{3})}(x)=\sum_{j=0}^{l-1}\frac{j}{l^{d+2-i}q}\chi_{[\frac{j}{l},\frac{j+1}{l}]}(x).\label{tilde psi}
\end{align}
Then we define the following three types of block slide map:
\begin{align*}
& \mathfrak{g}_{i,l,q}^{(\mathfrak{1})}:\T^d\to\T^d\qquad\text{defined by}\qquad\mathfrak{g}_{i,l,q}^{(\mathfrak{1})}\big(x_1,\ldots,x_d\big)= \left(x_1+\psi_{i,l,q}^{(\mathfrak{1})}(x_i),x_2,\ldots,x_d\right),\\
& \mathfrak{g}_{i,l,q}^{(\mathfrak{2})}:\T^d\to\T^d\qquad\text{defined by}\qquad \mathfrak{g}_{i,l,q}^{(\mathfrak{2})}\big(x_1,\ldots,x_d\big)=\left(x_1,\ldots,x_{i-1},x_i+\psi_{i,l,q}^{(\mathfrak{2})}(x_1),x_{i+1},\ldots,x_d\right),\\
& \mathfrak{g}_{i,l,q}^{(\mathfrak{3})}:\T^d\to\T^d\qquad\text{defined by}\qquad\mathfrak{g}_{i,l,q}^{(\mathfrak{3})}\big(x_1,\ldots,x_d\big)=\left(x_1-\psi_{i,l,q}^{(\mathfrak{3})}(x_i),x_2,\ldots,x_d\right).
\end{align*}
Note that the composition 
\begin{align}
\mathfrak{g}_{i,l,q}:\T^d\to\T^d\qquad\qquad\text{defined by}\qquad\mathfrak{g}_{i,l,q}=\mathfrak{g}_{i,l,q}^{(\mathfrak{3})}\circ\mathfrak{g}_{i,l,q}^{(\mathfrak{2})}\circ\mathfrak{g}_{i,l,q}^{(\mathfrak{1})}
\end{align}
maps the partition $\mathcal{G}_{i,l,q}$ to $\mathcal{G}_{i-1,l,q}$, where 
\begin{align}
& \mathcal{G}_{j,l,q}:=\Big\{\big[\frac{i_1}{l^{d+1-j}q},\frac{i_1+1}{l^{d+1-j}q}\big)\times \big[\frac{i_{2}}{l},\frac{i_{2}+1}{l}\big)\times\ldots\times\big[\frac{i_{j}}{l},\frac{i_{j}+1}{l}\big) \times \T^{d-j}:i_1 = 0,1,\ldots,l^{d+1-j}q-1,\nonumber\\
&\qquad\qquad\qquad\qquad\qquad\qquad\qquad\qquad\qquad\qquad (i_{2},\ldots,i_{j}) \in \{0,1,\ldots,l-1\}^{j-1}\Big\}.
\end{align}
So the composition
\begin{align}
\mathfrak{g}_{l,q}:\T^d\to\T^d\qquad\qquad\text{defined by}\qquad\mathfrak{g}_{l,q}=\mathfrak{g}_{2,l,q}\circ\ldots\circ\mathfrak{g}_{d,l,q}
\end{align}
maps the partition $\mathcal{G}_{l,q}$ to $\mathcal{T}_{l^dq}$.

In order to have conjugation maps that shift blocks of the same form, we modify the constructions slightly and define the block slide map $\mathfrak{g}_{i,l,q}^{(\mathfrak{2})}:\T^d\to\T^d$ for $i=3, \dots, d$ using the step function
\begin{equation}
\psi_{i,l,q}^{(\mathfrak{2})}:[0,1)\to \R \text{ defined by}\quad \psi_{i,l,q}^{(\mathfrak{2})}(x)=\sum_{j=0}^{2l^{d}q^2-1}\left(\Big\lfloor \frac{j}{2l^{i-2}q}\Big\rfloor-\Big\lfloor\frac{j}{2l^{i-1}q}\Big\rfloor \cdot l \right) \cdot \frac{1}{l} \cdot \chi_{[\frac{j}{2l^{d}q^2},\frac{j+1}{2l^{d}q^2}]}(x).
\end{equation}

Let $\varepsilon_n=\frac{\delta_n}{12d \cdot l^d_n \cdot q^2_n}$ and $\delta_n = \frac{1}{n \cdot q_n}$. With the aid of Proposition \ref{proposition approximation} we construct the real-analytic diffeomorphism $h_{i,l_n,q_n}^{(\mathfrak{j})}$ $\left(\varepsilon_n, \delta_n\right)$-approximating the maps $\mathfrak{g}_{i,l_n,q_n}^{(\mathfrak{j})}$ from above. Finally, we put
\begin{equation*}
h_{i,n} = h_{i,l_n,q_n}^{(\mathfrak{3})} \circ h_{i,l_n,q_n}^{(\mathfrak{2})}  \circ h_{i,l_n,q_n}^{(\mathfrak{1})} 
\end{equation*}
for $i = 3, \dots, d$.

\subsubsection*{Piecing everything together}
With the aid of the previous constructions we define our conjugation map
\begin{equation}
    h_{n} = h_{2,n} \circ h_{3,n}\circ \dots \circ h_{d,n} .
\end{equation}
We point out that $\phi^{\alpha_n} \circ h_{n} = h_{n} \circ \phi^{\alpha_n}$. \\

\begin{definition} \label{dfn:goodarea}
The ``good domain'' $G_n$ of $h_{n}$ and $h^{-1}_{n}$ respectively consists of all sets of the form
\begin{equation*}
    \left[\frac{i_1 + \delta_n}{2l^d_n q^2_n}, \frac{i_1 + 1- \delta_n}{2l^d_n q^2_n} \right] \times \left[\frac{i_2+\delta_n}{2l_n}, \frac{i_2+1-\delta_n}{2l_n} \right] \times \prod^d_{j=3}\left[\frac{i_j+\delta-n}{l_n}, \frac{i_j + 1 - \delta_n}{l_n} \right],
\end{equation*}
where $i_t \in \Z$ for $t=1, \dots, d$.
\end{definition}

\begin{remark}
On the ``good domain'' the deviation of the composition $h_{n}$ from the underlying block slide type of map is at most $\frac{\delta_n}{4 l^d_nq^2_n}$ by our choice of $\varepsilon_n$ in the construction of each map $h_{i,n}$.
\end{remark}

\section{Proof of the $(h,h+1)$-property}

\begin{figure}
    \centering
    \includegraphics[width=15cm, height=16cm, trim={1.5cm 8cm 6cm 5cm}, clip]{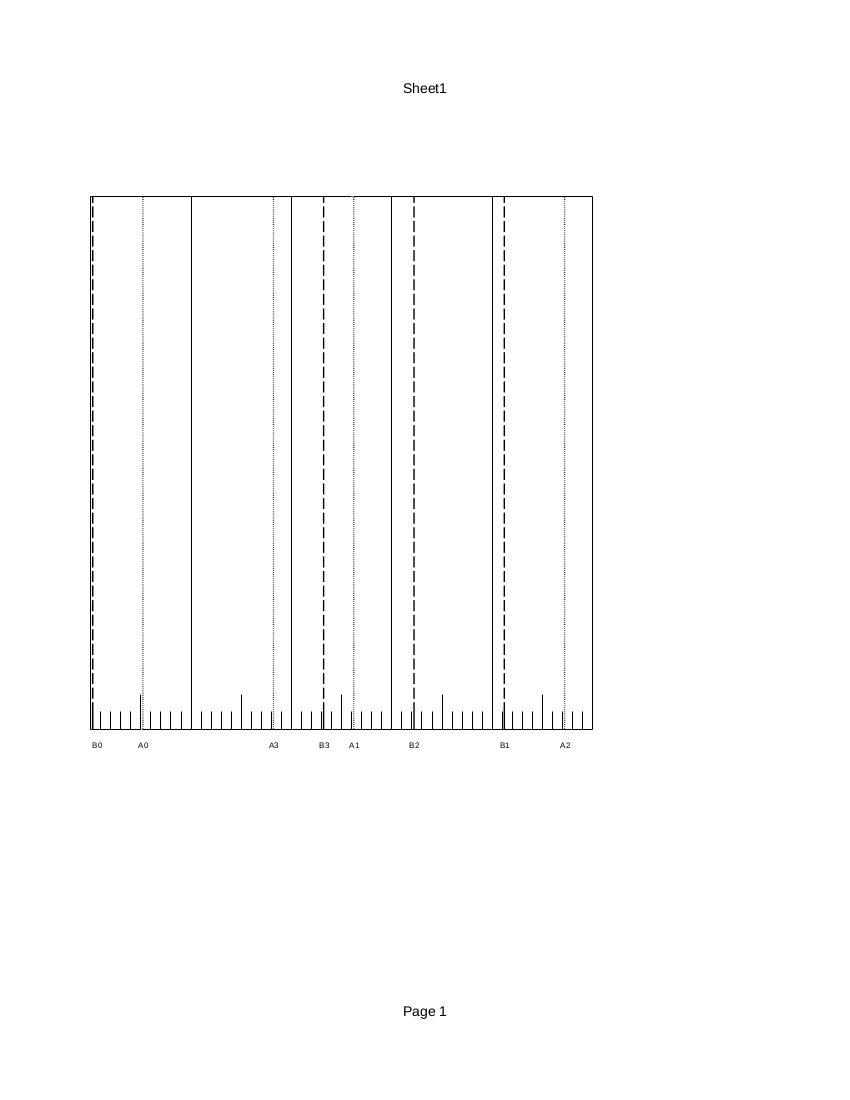}
    \caption{Sketch on $\T^2$ depicting the bases $\tilde{c}^{(n)}_{0,1}=A0\cup A1\cup A2\cup A3$ and $\tilde{c}^{(n)}_{0,2}=B0\cup B1\cup B2\cup B3$ of the two towers used in the approximation. This sketch is drawn with $p_n=3, q_n=5, m_n=3$. So $r_n=m_np_n=4 \mod q_n$. Our sketch is not fine enough to depict the $\d_n/(2l_n^2q_n^2)$ offset and the thickness of $q_n/q_{n+1}$. Note that the scale at the base is at a separation of $1/(2q_n^2)$. }
    \label{fig:t}
\end{figure}

\subsection{Construction of the towers} \label{subsection:tower1}
Let $\tilde{c}^{(n)}_{0,1}$ be the set
\begin{align*}
    \bigcup & \left[ \frac{i_1 \cdot r_n}{q_n} + \frac{i_1}{2q^2_n}+\frac{\delta_n}{2l^d_n q^2_n}, \frac{i_1 \cdot r_n}{q_n}+ \frac{i_1}{2q^2_n} + \frac{\delta_n}{2l^d_n q^2_n}+\frac{q_n}{q_{n+1}} \right] \\
    & \times \left[ \frac{i_2+\delta_n}{2l_n}, \frac{i_2+1-\delta_n}{2l_n} \right] \times \prod^d_{j=3} \left[ \frac{i_j+\delta_n}{l_n}, \frac{i_j+1-\delta_n}{l_n} \right], 
\end{align*}
where the union is taken over $i_1 \in \Z$, $0 \leq i_1 < q_n-1$, $i_2 \in \Z$, $0 \leq i_2 < 2l_n$, and $i_j \in \Z$, $0 \leq i_j < l_n$, for $j=3, \dots, d$ (see figure \ref{fig:t}). With this we define $c^{(n)}_{0,1} \coloneqq H^{-1}_n \left(\tilde{c}^{(n)}_{0,1} \right)$. By Remark \ref{remark combi} we have 
\begin{equation*}
    h^{-1}_{2,n} \left(\tilde{c}^{(n)}_{0,1} \right) \subset \left[0, \frac{1}{2l^{d-1}_nq_n}\right] \times \left[0, \frac{1}{l_n}\right] \times \prod^d_{j=3}\left[0,1\right].
\end{equation*}
Altogether, we have
\begin{equation*}
    h^{-1}_{n} \left(\tilde{c}^{(n)}_{0,1} \right) \subset \left[0, \frac{1}{l_nq_n}\right] \times \left[0, \frac{1}{l_n}\right] \times \dots \times \left[0, \frac{1}{l_n}\right].
\end{equation*}

Analogously we define the set $\tilde{c}^{(n)}_{0,2}$ by
\begin{align*}
    \bigcup & \left[ \frac{i_1 \cdot \left(r_n+p_n \right)}{q_n}+ \frac{1}{2q_n}+\frac{i_1}{2q^2_n} + \frac{\delta_n}{2l^d_n q^2_n}, \frac{i_1 \cdot \left(r_n + p_n \right)}{q_n} + \frac{1}{2q_n}+\frac{i_1}{2q^2_n}+ \frac{\delta_n}{2l^d_n q^2_n}+\frac{q_n}{q_{n+1}} \right] \\
    & \times \left[ \frac{i_2+\delta_n}{2l_n}, \frac{i_2+1-\delta_n}{2l_n} \right] \times \prod^d_{j=3} \left[ \frac{i_j+\delta_n}{l_n}, \frac{i_j+1-\delta_n}{l_n} \right] 
\end{align*}
where the union is taken over $i_1 \in \Z$, $0 \leq i_1 < q_n-1$, $i_2 \in \Z$, $0 \leq i_2 < 2l_n$, and $i_j \in \Z$, $0 \leq i_j < l_n$, for $j=3, \dots, d$. With this we define $c^{(n)}_{0,1} \coloneqq H^{-1}_n \left(\tilde{c}^{(n)}_{0,1} \right)$. \\

\begin{remark} \label{remark measure base}
We note 
\begin{equation*}
    \mu \left( c^{(n)}_{0,i}\right) = \mu \left( \tilde{c}^{(n)}_{0,i}\right) = \left(q_n - 1\right) \cdot l^{d-1}_n \cdot \frac{q_n}{q_{n+1}} \cdot \left(\frac{1-2\delta_n}{l_n}\right)^{d-1}  = \left(1-\frac{1}{q_n} \right) \cdot \left( 1- 2 \delta_n \right)^{d-1} \cdot \frac{q^2_n}{q_{n+1}}.
\end{equation*}
\end{remark}
With our number $m_n= \frac{q_{n+1}}{2q^2_n}$ we define the following sets:
\begin{align*}
&c^{(n)}_{i,1} \coloneqq T^{i}_n\left(c^{(n)}_{0,1}\right) \text{ for } i=0,1,...,m_n-1, \\
&c^{(n)}_{i,2} \coloneqq T^{i}_n\left(c^{(n)}_{0,2}\right) \text{ for } i=0,1,...,m_n.
\end{align*}
It should be noted that these sets are disjoint by construction. Hence, we are able to define these two towers:
\begin{definition}
The first tower $B^{(n)}_1$ with base $c^{(n)}_{0,1}$ and height $m_n$ consists of the sets $c^{(n)}_{i,1}$ for $i=0,1,...,m_n-1$.
The second tower $B^{(n)}_2$ with base $c^{(n)}_{0,2}$ and height $m_n+1$ consists of the sets $c^{(n)}_{i,2}$ for $i=0,1,...,m_n$.
\end{definition}
In the rest of this subsection we check that these towers satisfy the requirements of the definition of a $\left(h,h+1\right)$-approximation. First of all, we notice that both towers are substantial because we get by Remark \ref{remark measure base}:
\begin{align*}
& \mu\left(B^{(n)}_1\right)=m_n \cdot \mu\left(c^{(n)}_{0,1}\right) = \frac{q_{n+1}}{2q^2_n} \cdot \left(1-\frac{1}{q_n} \right) \cdot \left( 1- 2 \delta_n \right)^{d-1} \cdot \frac{q^2_n}{q_{n+1}} = \left(1-\frac{1}{q_n} \right) \cdot \left( 1- 2 \delta_n \right)^{d-1} \cdot \frac{1}{2}, \\
& \mu\left(B^{(n)}_2\right)=\left(m_n+1\right) \cdot \mu\left(c^{(n)}_{0,2}\right) = \left(  1+\frac{2 q^2_n}{q_{n+1}}\right) \cdot \left(1-\frac{1}{q_n} \right) \cdot \left( 1- 2 \delta_n \right)^{d-1} \cdot \frac{1}{2}.
\end{align*}
Using the notation from section \ref{section:theory} we consider the partial partition
\begin{equation*}
\xi_n \coloneqq \left\{ c^{(n)}_{i,1}, c^{(n)}_{k,2} \; : \; i=0,1,...,m_n-1; \;k=0,1,...,m_n\right\}
\end{equation*}
and $\sigma_n$ the associated permutation satisfying $\sigma_n \left( c^{(n)}_{i,1} \right) = c^{(n)}_{i+1,1}$ for $i=0,1,...,m_n-2$, $\sigma_n \left( c^{(n)}_{m_n-1,1} \right) = c^{(n)}_{0,1}$ as well as $\sigma_n \left( c^{(n)}_{k,2} \right) = c^{(n)}_{k+1,2}$ for $k=0,1,...,m_n-1$, $\sigma_n \left( c^{(n)}_{m_n,2} \right) = c^{(n)}_{0,2}$. We have to show: $\xi_n \rightarrow \varepsilon$ as $n\rightarrow \infty$. This property is fulfilled by the next lemma.

\begin{lemma}
We have
\begin{equation*}
\xi_n \rightarrow \varepsilon \text{ as } n\rightarrow \infty. 
\end{equation*}
\end{lemma}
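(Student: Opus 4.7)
The plan is to verify the standard sufficient criterion for $\xi_n \to \varepsilon$: it suffices to show that (i) $\mu\!\left(\bigcup_{c \in \xi_n} c\right) \to 1$ as $n \to \infty$, and (ii) the atoms $c_{i,j}^{(n)}$ have diameters tending to $0$ uniformly in $i$ and $j$ outside an exceptional set of vanishing measure. Under both conditions, for any measurable $A \subseteq \T^d$ the union $A_n$ of those atoms of $\xi_n$ whose intersection with $A$ covers at least half of the atom satisfies $\mu(A \triangle A_n) \to 0$. Condition (i) is immediate from the measure computation just before the lemma, since
\[
\mu(B_1^{(n)}) + \mu(B_2^{(n)}) = \Bigl(1+\tfrac{q_n^2}{q_{n+1}}\Bigr)\Bigl(1-\tfrac{1}{q_n}\Bigr)(1-2\delta_n)^{d-1} \longrightarrow 1,
\]
with $1/q_n$, $\delta_n = 1/(nq_n)$, and $q_n^2/q_{n+1} = 1/(k_n l_n)$ all vanishing.

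For (ii), I first treat the tower bases $c_{0,j}^{(n)} = H_{n-1}^{-1}(h_n^{-1}(\tilde c_{0,j}^{(n)}))$. By construction, $\tilde c_{0,j}^{(n)}$ is a union of rectangles sitting inside the good domain of Definition \ref{dfn:goodarea}, chosen so that the block-slide preimage $\mathfrak h_n^{-1}(\tilde c_{0,j}^{(n)})$ lies in the box $[0,1/(l_n q_n)]\times[0,1/l_n]^{d-1}$ of diameter $O(\sqrt{d}/l_n)$. Since Proposition \ref{proposition approximation} makes $h_n$ $(\varepsilon_n,\delta_n)$-close to $\mathfrak h_n$, the image $h_n^{-1}(\tilde c_{0,j}^{(n)})$ stays in that box up to an error $\varepsilon_n$ outside an exceptional set of measure $O(\delta_n)$; applying $H_{n-1}^{-1}$ multiplies diameters by at most $\|DH_{n-1}^{-1}\|_0$, and condition \ref{eq:l} then gives $\mathrm{diam}(c_{0,j}^{(n)}) = O(1/n^2)$ off the exceptional set. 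For a general level $c_{i,j}^{(n)}$ with $1 \leq i \leq m_n$, the commutation $h_n \circ \phi^{\alpha_n} = \phi^{\alpha_n} \circ h_n$ combined with $\alpha_{n+1} = p_n/q_n + 1/q_{n+1}$ lets us rewrite
\[
c_{i,j}^{(n)} = H_{n-1}^{-1} \circ \phi^{i p_n/q_n} \circ h_n^{-1}\bigl(\phi^{i/q_{n+1}}(\tilde c_{0,j}^{(n)})\bigr),
\]
so that, since $\phi^{i p_n/q_n}$ is an isometry, it is enough to control $\mathrm{diam}\bigl(h_n^{-1}(\phi^{i/q_{n+1}}(\tilde c_{0,j}^{(n)}))\bigr)$. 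For $0 \leq i \leq m_n$, the shift $\phi^{i/q_{n+1}}$ displaces $\tilde c_{0,j}^{(n)}$ by at most $m_n/q_{n+1} = 1/(2q_n^2)$ in the $x_1$-direction; viewing $\tilde c_{0,j}^{(n)}$ as a union of $A$-rectangles in the sense of Remark \ref{remark combi} and invoking Remark \ref{remark combi}(3), the shifted set is essentially again a union of $A$-rectangles with the $e$-coordinate advanced, whose $\mathfrak h_n^{-1}$-images sit in the same small box, and the $(\varepsilon_n,\delta_n)$-approximation transfers the bound to $h_n^{-1}$.

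The main obstacle will be making this last step fully rigorous. The shift $\phi^{i/q_{n+1}}$ can cause the shifted rectangles to straddle boundaries between pieces of the block-slide map $\mathfrak h_n$, so the clean combinatorial identity of Remark \ref{remark combi}(3) only applies up to a small-measure discrepancy, and one must quantify this discrepancy using the deliberate $\delta_n$-padding of the good domain together with the fact that $k_n$ is chosen sufficiently large in the AbC induction that the shift $1/(2q_n^2)$ is controlled relative to the $A$-rectangle widths. Summing the resulting exceptional subsets across the $O(m_n)$ levels of the two towers, their disjointness as subsets of $\T^d$ together with the measure preservation of $T_n$ should produce a total exceptional set $E_n$ with $\mu(E_n) \to 0$, completing the verification of (ii).
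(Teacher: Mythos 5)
Your overall strategy coincides with the paper's: reduce the lemma to showing that the atoms of diameter less than $1/n$ cover measure tending to $1$, write $c^{(n)}_{i,j}=H_{n-1}^{-1}\circ\phi^{i p_n/q_n}\circ h_n^{-1}\bigl(\phi^{i/q_{n+1}}(\tilde c^{(n)}_{0,j})\bigr)$ using the commutation, use the block-slide combinatorics and the good domain of Definition \ref{dfn:goodarea} to place $h_n^{-1}\bigl(\phi^{i/q_{n+1}}(\tilde c^{(n)}_{0,j})\bigr)$ in a single cuboid of $x_1$-length $\frac{1}{l_nq_n}$ and length $\frac{1}{l_n}$ in the remaining coordinates, and finish with condition \ref{eq:l}. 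However, the step you yourself label the ``main obstacle'' is exactly the substance of the proof, and your sketch for closing it does not work. The assertion that $k_n$ can be chosen so that the shift $\frac{1}{2q_n^2}$ is controlled relative to the $A$-rectangle widths is false: those rectangles have $x_1$-width $\frac{1}{2l^d_nq^2_n}$, so the total displacement $m_n/q_{n+1}=\frac{1}{2q^2_n}$ exceeds it by the factor $l^d_n$ no matter how large $k_n$ is; $k_n$ only controls the per-step increment $1/q_{n+1}$ and the stripe width $q_n/q_{n+1}$. Moreover, excising small-measure ``exceptional subsets'' from the atoms and summing them over the $O(m_n)$ levels does not deliver the criterion: the criterion needs atoms of small diameter, and deleting a sliver does not shrink a diameter. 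Indeed, for an iterate $i$ at which the shifted base straddles a boundary of the $\frac{1}{2l^d_nq^2_n}$-grid, its good-domain part splits into pieces lying in adjacent cells with different combinatorial indices, whose $h_n^{-1}$-images land in different, generally far apart, boxes; such an atom simply fails the diameter bound, with or without the sliver. (Also, Remark \ref{remark combi}(3) concerns the specific shifts by $m_n\alpha_{n+1}$ and $(m_n+1)\alpha_{n+1}-\frac{1}{q_{n+1}}$ and is used for the return estimates, not for this diameter bound.)

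The resolution used in the paper is to discard whole levels rather than slivers. Every stripe of $\tilde c^{(n)}_{0,j}$ sits at the same offset $\frac{\delta_n}{2l^d_nq^2_n}$ inside its cell of the $\frac{1}{2l^d_nq^2_n}$-grid, so whether the \emph{entire} set $\phi^{i/q_{n+1}}(\tilde c^{(n)}_{0,j})$ lies in the good domain is a single condition on the position of $\frac{i}{q_{n+1}}$ modulo $\frac{1}{2l^d_nq^2_n}$. As $i$ runs through $0,\dots,m_n-1$ this position advances in steps of $\frac{1}{q_{n+1}}$, so at most about $2\delta_n m_n$ iterates violate the condition (plus an edge term of order $\frac{q_n/q_{n+1}}{1/(2l^d_nq^2_n)}$, and it is here --- stripe width versus cell width --- that largeness of $k_n$ genuinely enters). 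Throwing away those levels in their entirety, the remaining atoms are contained in the asserted cuboid, hence have diameter less than $\frac{1}{n}$ by condition \ref{eq:l}, and they cover measure at least $\bigl(1-\tfrac{1}{q_n}\bigr)\bigl(1-2\delta_n\bigr)^{d}\rightarrow 1$, which is precisely the paper's estimate. As it stands, your proposal has a genuine gap at this counting step, and the two ideas offered to fill it (largeness of $k_n$ against the total shift, and measure-only exceptional sets inside atoms) are respectively incorrect and insufficient.
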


\begin{proof}
The lemma is proven if we show that the partial partitions $\tilde{\xi}_n \coloneqq \left\{c \in \xi_n \;:\;\text{diam}\left(c\right) < \frac{1}{n}\right\}$ satisfy $\mu\left(\bigcup_{c \in \tilde{\xi}_n}c\right)\rightarrow 1$ as $n \rightarrow \infty$. For this purpose, we examine which tower elements satisfy the condition on their diameter. For this purpose, we note that 
\begin{equation*}
c^{(n)}_{i,j}= T^{i}_n\left(c^{(n)}_{0,j} \right)= H^{-1}_{n-1} \circ h^{-1}_n \circ \phi^{i \cdot \alpha_{n+1}} \circ H_n\left(H^{-1}_{n}\left(\tilde{c}^{(n)}_{0,j}\right)\right) = H^{-1}_{n-1} \circ h^{-1}_n \circ \phi^{i \cdot \alpha_{n+1}} \left(\tilde{c}^{(n)}_{0,j}\right)
\end{equation*}
for $j=1,2$ and the set $c^{(n)}_{i,j}$ is contained in one cuboid of $x_1$-length $\frac{1}{l_nq_n}$ and length $\frac{1}{l_n}$ in the $x_2, \dots, x_d$-coordinates, if  $\phi^{i \cdot \alpha_{n+1}} \left(\tilde{c}^{(n)}_{0,j}\right)$ belongs to the ``good domain'' of $h^{-1}_n$ (as introduced in Definition \ref{dfn:goodarea}) and the deviation $i \cdot \left| \a_{n+1}-\a_n \right|$ is less than $\frac{1}{2q^2_n}$. By requirement \ref{eq:l} on the number $l_n$ the diameter of the image of such a cuboid under $H^{-1}_{n-1}$ is less than $\frac{1}{n^2}$. So we have to check for how many iterates $i$ the set $\phi^{i \cdot \alpha_{n+1}} \left(\tilde{c}^{(n)}_{0,j}\right)$ is contained in the ``good domain'' of $h^{-1}_n$. Note that the bases of both towers are positioned in this ``good domain''. By the $\frac{1}{q_n}$-equivariance of the map $h_n$ and $i \cdot \alpha_{n+1} = \frac{i \cdot p_n}{q_n} + \frac{i}{q_{n+1}}$ we consider the displacement $\frac{i}{q_{n+1}}$, which is at most $\frac{1}{2q^2_n}$ because of $i \leq m_n \leq \frac{q_{n+1}}{2q^2_n}$. Since the projection on the $x_1$-coordinate of the ``good domain'' from Definition \ref{dfn:goodarea} consists of all intervals of the form $\left[\frac{i_1+\delta_n}{2l^d_nq^2_n}, \frac{i_1+1-\delta_n}{2l^d_nq^2_n} \right]$, we can estimate the number of allowed iterates $i \in \left\{0,1,...,m_n-1\right\}$ by $\left(1-2\delta_n\right)\cdot m_n$. We conclude that there are at least $2 \cdot \left(1-2\delta_n\right) \cdot m_n$ partition elements $c^{(n)}_{i,j}$ in $\tilde{\xi}_n$ and this corresponds to a measure
\begin{equation*}
\mu\left(\bigcup_{c \in \tilde{\xi}_n}c\right) \geq 2 \cdot \left(1-2\delta_n\right) \cdot m_n \cdot \mu\left(c^{(n)}_{i,j}\right) = \left(1-\frac{1}{q_{n}}\right) \cdot \left(1-2\delta_n\right)^d,
\end{equation*}
which converges to $1$ as $n\rightarrow \infty$.
\end{proof}

Actually we have a linked approximation of type $\left(h,h+1\right)$: Once again using the notation of section \ref{section:theory} we consider the partial partition
\begin{equation*}
\eta_n \coloneqq \left\{c^{(n)}_{i,1} \cup c^{(n)}_{i,2},\,c^{(n)}_{m_n,2} \;:\; 0\leq i \leq m_n-1\right\}
\end{equation*}
and prove $\eta_n \rightarrow \varepsilon$ as $n\rightarrow \infty$.

\begin{lemma} \label{lem:eta}
We have
\begin{equation*}
\eta_n \rightarrow \varepsilon \text{ as } n\rightarrow \infty. 
\end{equation*}
\end{lemma}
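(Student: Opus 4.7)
The plan is to parallel the preceding lemma, tracking both tower levels simultaneously. Set $\tilde{\eta}_n := \{c \in \eta_n : \text{diam}(c) < 1/n\}$; it suffices to show $\mu(\bigcup \tilde{\eta}_n) \to 1$. The singleton atom $c^{(n)}_{m_n, 2}$ can be discarded at the outset, since $\mu(c^{(n)}_{m_n,2}) = \mu(c^{(n)}_{0,2}) = O(q_n^2/q_{n+1}) \to 0$ by Remark \ref{remark measure base}.

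For the remaining atoms $c^{(n)}_{i,1} \cup c^{(n)}_{i,2}$ with $0 \le i \le m_n - 1$, I would write
\[
c^{(n)}_{i,j} = H_{n-1}^{-1} \circ \phi^{i p_n/q_n} \circ h_n^{-1} \circ \phi^{i/q_{n+1}}(\tilde{c}^{(n)}_{0,j}),
\]
using that $h_n$ commutes with $\phi^{1/q_n}$ and $\alpha_{n+1} = p_n/q_n + 1/q_{n+1}$. Since the shift $i/q_{n+1}$ is at most $1/(2q_n^2)$, the counting argument of the preceding lemma applies to both bases and shows that, for at least $(1-2\delta_n) m_n$ indices $i$, both sets $\phi^{i/q_{n+1}}(\tilde{c}^{(n)}_{0,j})$, $j = 1,2$, lie in the good domain of $h_n^{-1}$.

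The key new ingredient is to show that, for such ``good'' $i$, the image $h_n^{-1} \circ \phi^{i/q_{n+1}}(\tilde{c}^{(n)}_{0,1} \cup \tilde{c}^{(n)}_{0,2})$ sits in a single cuboid of $x_1$-side at most $1/(l_n q_n)$ and remaining sides at most $1/l_n$. For this I would identify the base strips making up $\tilde{c}^{(n)}_{0,1}$ as images under $\mathfrak{h}_{l_n, p_n, q_n, r_n}$ of $A$-cells with index $e = i_1 \in \{0, \dots, q_n - 2\}$ (the case $0 \le e < q$ of Remark \ref{remark combi}(2)), and those of $\tilde{c}^{(n)}_{0,2}$ as $A$-cells with $e = q_n + i_1 \in \{q_n, \dots, 2q_n - 2\}$ (the case $q \le e < 2q$). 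Their pre-images under $h_{2,n}^{-1}$ then occupy the adjacent $x_1$-intervals $[0, 1/(2l_n^{d-1}q_n)]$ and $[1/(2l_n^{d-1}q_n), 1/(l_n^{d-1}q_n)]$, and the remaining conjugations $h_{i,n}^{-1}$, $i \ge 3$, fold the bases into the required $1/l_n$-intervals coordinate by coordinate while preserving this $x_1$-adjacency.

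Finally, composing with the isometric translation $\phi^{i p_n/q_n}$ and with $H_{n-1}^{-1}$, and invoking the requirement $l_n > d n^2 \|DH_{n-1}^{-1}\|_0$ from \ref{eq:l}, one obtains $\text{diam}(c^{(n)}_{i,1} \cup c^{(n)}_{i,2}) < 1/n^2 < 1/n$ for every good $i$. Summing,
\[
\mu\!\left(\bigcup_{c \in \tilde{\eta}_n} c\right) \ge (1 - 2\delta_n) m_n \cdot 2\mu(c^{(n)}_{0,1}) = (1 - 1/q_n)(1 - 2\delta_n)^{d} \longrightarrow 1.
\]
The main obstacle will be the combinatorial adjacency verification at the level of the full conjugation $h_n^{-1} = h_{2,n}^{-1} \circ \cdots \circ h_{d,n}^{-1}$: the $h_{2,n}^{-1}$ piece follows directly from Remark \ref{remark combi}, but confirming that the higher-dimensional maps $h_{i,n}^{-1}$, $i \ge 3$, preserve the splitting of the two bases in the $x_1$-coordinate requires a careful unpacking of the step functions $\psi_{i, l_n, q_n}^{(\mathfrak{2})}$ from the definition of $h_{i,n}$.
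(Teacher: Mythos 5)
Your proposal is correct and follows essentially the same route as the paper: the paper's own proof simply notes that $c^{(n)}_{i,1}\cup c^{(n)}_{i,2}$ lies in a single cuboid of $x_1$-length $\frac{1}{l_nq_n}$ and length $\frac{1}{l_n}$ in the remaining coordinates whenever $\phi^{i\cdot\alpha_{n+1}}\bigl(\tilde{c}^{(n)}_{0,1}\cup\tilde{c}^{(n)}_{0,2}\bigr)$ lies in the ``good domain'' and the deviation is below $\frac{1}{2q_n^2}$, and then reuses the counting from the previous lemma, exactly as you do. Your extra detail (discarding the singleton atom $c^{(n)}_{m_n,2}$ and verifying the adjacency of the $e$-ranges $0\leq e<q_n$ and $q_n\leq e<2q_n$ under $\mathfrak{h}_{l_n,p_n,q_n,r_n}$ so that the two pre-images fill one atom of $\mathcal{G}_{2,l_n,q_n}$ and hence one atom of $\mathcal{G}_{l_n,q_n}$ after the maps $h_{i,n}^{-1}$, $i\geq 3$) is consistent with, and merely makes explicit, what the paper leaves implicit from the construction of the tower bases.
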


\begin{proof}
To see this we examine the partition 
\begin{equation*}
\tilde{\eta}_n \coloneqq \left\{\tilde{c}_i \coloneqq c^{(n)}_{i,1} \cup c^{(n)}_{i,2} \;:\; 0\leq i \leq m_n-1, \text{ diam}\left(\tilde{c}_i\right)<\frac{1}{n}\right\}
\end{equation*}
and show $\lim_{n \rightarrow \infty} \mu\left(\bigcup_{\tilde{c} \in \tilde{\eta}_n} \tilde{c}\right) = 1$. For this purpose, we note that $c^{(n)}_{i,1} \cup c^{(n)}_{i,2}$ is contained in one cuboid of $x_1$-length $\frac{1}{l_nq_n}$ and length $\frac{1}{l_n}$ in the $x_2, \dots, x_d$-coordinates, if  $\phi^{i \cdot \alpha_{n+1}} \left(\tilde{c}^{(n)}_{0,1} \cup \tilde{c}^{(n)}_{0,2}\right)$ belongs to the ``good domain'' of $h^{-1}_n$ and the deviation $i \cdot \left| \a_{n+1}-\a_n \right|$ is less than $\frac{1}{2q^2_n}$. Hence, the calculations from the previous lemma apply.
\end{proof}

\subsection{Speed of approximation}
In this section we want we prove that $T$ admits a good linked approximation of type $(h,h+1)$:
\begin{proposition} \label{prop:h+1}
The constructed diffeomorphism $T \in \text{Diff}^{\omega}_{\rho} \left(\T^d, \mu \right)$ admits a good linked approximation of type $(h.h+1)$.
\end{proposition}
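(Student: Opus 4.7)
The plan is to verify the three defining ingredients of a good linked approximation of type $(h, h+1)$: substantiality of both towers, equivalence of the two towers in the periodic process, and the speed estimate $d(\xi_n, T, \sigma_n) = o(1/m_n)$. Substantiality was already computed right after Remark \ref{remark measure base}, and equivalence (linkage) is exactly the content of Lemma \ref{lem:eta}, so the only remaining task is the speed estimate. I would attack it via the triangle inequality
\begin{equation*}
d(\xi_n, T, \sigma_n) \;\le\; d(\xi_n, T_n, \sigma_n) \;+\; d(\xi_n, T, T_n)
\end{equation*}
and bound the two summands separately.

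For the combinatorial summand $d(\xi_n, T_n, \sigma_n)$, the key observation is that by construction $T_n(c^{(n)}_{i, j}) = c^{(n)}_{i+1, j} = \sigma_n(c^{(n)}_{i, j})$ on every non-top level, so only the two tops contribute. Using measure preservation of $H_n$, the error at the top of tower $1$ reduces to $\mu(\phi^{m_n \alpha_{n+1}} \tilde{c}^{(n)}_{0, 1} \triangle \tilde{c}^{(n)}_{0, 1})$, and by \eqref{a1} the shift $\phi^{m_n \alpha_{n+1}}$ displaces the $i_1$-th ``stripe'' of $\tilde{c}^{(n)}_{0, 1}$ exactly onto the $(i_1 + 1)$-th stripe position; since the defining union runs only over $i_1 = 0, \ldots, q_n - 2$, the symmetric difference consists of the two boundary stripes, yielding a contribution of order $1/(q_n m_n)$. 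The analogous computation at the top of tower $2$ uses \eqref{a2}: the additional offset $1/q_{n+1}$ in the $x_1$-direction is a $1/q_n$-fraction of a stripe of $x_1$-width $q_n/q_{n+1}$, so it only produces a $1/q_n$-fraction spillage per intermediate stripe, and the total is again of order $1/(q_n m_n)$. Altogether $d(\xi_n, T_n, \sigma_n) = O(1/(q_n m_n)) = o(1/m_n)$.

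The remaining summand $d(\xi_n, T, T_n)$ is where I expect the main obstacle. Each symmetric difference $T(c) \triangle T_n(c)$ is contained in a neighborhood of $\partial T_n(c)$ of width of order $d_\rho(T, T_n)$, so summing over the $2m_n + 1$ tower levels produces a bound roughly proportional to $d_\rho(T, T_n) \cdot m_n \cdot \|DH_n^{-1}\|_0^{d-1}$, the last factor coming from transporting the boundaries of the rectangular preimages through $H_n^{-1}$. The standard AbC device resolves this: by strengthening step (5) of section \ref{subsection abc method} and taking $k_n$ sufficiently large at stage $n$, after $H_n$ and hence $\|DH_n^{-1}\|_0$ have already been fixed, one can force $d_\rho(T, T_n)$ to be small enough that this product is $o(1/m_n)$, consistent with all previous constraints on the $k$-parameters.

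Combining the two estimates yields $d(\xi_n, T, \sigma_n) = o(1/m_n)$; together with the substantiality computed earlier, the height difference of exactly $1$, and the linkage from Lemma \ref{lem:eta}, this is precisely a good linked approximation of type $(h, h+1)$ with $h = m_n$.
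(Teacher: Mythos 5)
Your overall strategy (substantiality from Remark \ref{remark measure base}, linkage from Lemma \ref{lem:eta}, then a triangle-inequality split of the speed) and your handling of the combinatorial term $\sum_{c\in\xi_n}\mu\left(T_n(c)\triangle\sigma_n(c)\right)$ are the same as the paper's: only the two tower tops contribute, and the stripe bookkeeping coming from \eqref{a1} and \eqref{a2} gives an error of order $q_n/q_{n+1}=O\left(1/(q_nm_n)\right)=o(1/m_n)$, exactly as in Lemma \ref{lem:error1}. Where you genuinely diverge is the term $\sum_{c\in\xi_n}\mu\left(T(c)\triangle T_n(c)\right)$. The paper's Lemma \ref{lem:error2} uses the commutation $h_{n+1}\circ\phi^{\alpha_{n+1}}=\phi^{\alpha_{n+1}}\circ h_{n+1}$ to rewrite $T_n=H_{n+1}^{-1}\circ\phi^{\alpha_{n+1}}\circ H_{n+1}$, so that by measure preservation the comparison of $T_{n+1}(c)$ and $T_n(c)$ happens entirely upstairs, as a comparison of two nearby rotations of $h_{n+1}(\bar c)$; the ``good domain'' of Definition \ref{dfn:goodarea} and the explicit choices of $\delta_{n+1},\varepsilon_{n+1}$ then yield the quantitative bound $40d\,\delta_{n+1}$, with no norms of $H_n$ and no conditions on $k_n$ beyond those already in the scheme. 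Your soft $C^0$-collar argument can be made to work: since $d_\rho$ controls both $T\circ T_n^{-1}$ and its inverse in the sup norm, $T(c)\triangle T_n(c)$ does lie in a $d_\rho(T,T_n)$-collar of $\partial T_n(c)$, and the collar measure is best estimated by pushing it upstairs with the Lipschitz constant of $H_n$ and computing with the cuboids there (the upstairs boundary area is of order $d\cdot q_n$ per level, a factor your ``roughly proportional'' hides but which is harmless). Two points need care, though neither is fatal: first, choosing $k_n$ large at stage $n$ does not by itself shrink $d_\rho(T,T_n)$ --- what legitimizes your device is that $d_\rho(T,T_n)\le\sum_{m>n}\varepsilon_m\le 2\cdot 2^{-q_{n+1}}$ by the later-stage closeness requirements together with $\varepsilon_m<2^{-q_m}$, while $q_{n+1}=k_nl_nq_n^2$ grows with $k_n$ and $q_n$, $\|DH_n^{\pm 1}\|_0$ are fixed before $k_n$ is chosen, so the exponential decay beats the polynomial factors (including $m_n^2$, which also depends on $k_n$); second, the constants in the collar estimate must be tracked so they depend only on stage-$n$ data, which the upstairs computation guarantees. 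In summary, your route is correct after these clarifications; the trade-off is that the paper's argument gives an explicit, parameter-free speed estimate $\frac{3q_n}{q_{n+1}}+40d\,\delta_{n+1}$ valid for the construction exactly as specified, whereas your perturbative argument is more robust and less dependent on the fine structure of $h_{n+1}$, but leans on additional (standard, non-circular) largeness or smallness requirements on the parameters.
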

Since $T$ admits a linked approximation by Lemma \ref{lem:eta}, we have to compute the speed of the approximation in order to prove this statement. First of all, we observe 
\begin{equation} \label{eq:firstofall}
\sum_{c \in \xi_n} \mu\left(T\left(c\right) \triangle \sigma_n\left(c\right)\right) \leq \sum_{c \in \xi_n}\left(\mu\left(T\left(c\right) \triangle T_n\left(c\right)\right)+ \mu\left(T_n\left(c\right) \triangle \sigma_n\left(c\right)\right)\right)
\end{equation}
recalling that $\sigma_n$ is the associated permutation satisfying $\sigma_n \left( c^{(n)}_{i,1} \right) = c^{(n)}_{i+1,1}$ for $i=0,1,...,m_n-2$, $\sigma_n \left( c^{(n)}_{m_n-1,1} \right) = c^{(n)}_{0,1}$ as well as $\sigma_n \left( c^{(n)}_{k,2} \right) = c^{(n)}_{k+1,2}$ for $k=0,1,...,m_n-1$, $\sigma_n \left( c^{(n)}_{m_n,2} \right) = c^{(n)}_{0,2}$. In the subsequent lemmas we examine each summand.
\begin{lemma} \label{lem:error1}
We have
\begin{equation*} 
\sum_{c \in \xi_n}\mu\left(T_n\left(c\right) \triangle \sigma_n\left(c\right)\right) \leq \frac{3 q_n}{q_{n+1}}.
\end{equation*}
\end{lemma}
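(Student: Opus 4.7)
The plan is to reduce the sum to only the two top-of-tower contributions, then convert symmetric differences on the conjugated towers into symmetric differences on the base tiles $\tilde c^{(n)}_{0,j}$, which are completely explicit, and finally count how many ``$i_1$-stripes'' match up under the relevant $x_1$-translations.

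First I would exploit that by construction $T_n(c^{(n)}_{i,j}) = c^{(n)}_{i+1,j} = \sigma_n(c^{(n)}_{i,j})$ for every non-top level of either tower, so those summands vanish and only two terms remain:
\begin{equation*}
\sum_{c \in \xi_n} \mu\bigl(T_n(c) \triangle \sigma_n(c)\bigr) = \mu\bigl(T_n(c^{(n)}_{m_n-1,1}) \triangle c^{(n)}_{0,1}\bigr) + \mu\bigl(T_n(c^{(n)}_{m_n,2}) \triangle c^{(n)}_{0,2}\bigr).
\end{equation*}
Next, since $T_n^k = H_n^{-1} \circ \phi^{k\alpha_{n+1}} \circ H_n$ and $H_n$ preserves $\mu$, each summand equals $\mu(\phi^{k\alpha_{n+1}}(\tilde c^{(n)}_{0,j}) \triangle \tilde c^{(n)}_{0,j})$ for $k = m_n$ and $k = m_n+1$ respectively. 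Thus the problem is reduced to two $x_1$-translation estimates on totally explicit unions of rectangles.

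For the tower of height $m_n$, identity \eqref{a1} says the shift is exactly $\tfrac{r_n}{q_n} + \tfrac{1}{2q_n^2}$, which by design of $\tilde c^{(n)}_{0,1}$ is precisely the increment between the $x_1$-positions of the $i_1$-stripe and the $(i_1+1)$-stripe. Hence stripe $i_1$ is mapped \emph{onto} stripe $i_1+1$ for $i_1 = 0, \dots, q_n-3$, leaving only the bottom stripe of $\tilde c^{(n)}_{0,1}$ unmatched and the top image-stripe outside the base; this yields at most $\tfrac{2q_n}{q_{n+1}}(1-2\delta_n)^{d-1}$. For the tower of height $m_n+1$, identity \eqref{a2} shows that the same matching occurs, but with an additional $\tfrac{1}{q_{n+1}}$ offset in $x_1$; each of the $q_n-2$ matched interior pairs now has $x_1$-symmetric difference of length $\tfrac{2}{q_{n+1}}$, plus two unmatched boundary stripes of $x_1$-length $\tfrac{q_n}{q_{n+1}}$. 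Combining these with the $(1-2\delta_n)^{d-1}$ factor from the $x_2,\dots,x_d$ directions produces the claimed bound after elementary addition.

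The main technical obstacle is bookkeeping the residues: one has to know that the values $\{i_1 r_n \bmod q_n\}$ and $\{i_1 (r_n+p_n) \bmod q_n\}$ for $i_1 = 0, \dots, q_n-2$ are all distinct, so that the $i_1$-stripes do not coincidentally collide and no overlap is silently counted twice (or missed). This follows from $\gcd(p_n,q_n)=1$ together with the choice $m_n = l_n k_n / 2$ made in Section~6.1, which together force the requisite coprimality with $q_n$. Once stripe distinctness is verified, the counting is purely mechanical, and the $(1-2\delta_n)^{d-1}$ factor is absorbed into the constant to give $\leq \tfrac{3q_n}{q_{n+1}}$.
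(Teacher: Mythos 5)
Your proposal follows the same route as the paper's proof: all non-top levels drop out because $T_n\left(c^{(n)}_{i,j}\right)=c^{(n)}_{i+1,j}=\sigma_n\left(c^{(n)}_{i,j}\right)$ there, the two surviving summands are transported by the measure-preserving map $H_n$ to $\mu\left(\phi^{m_n\alpha_{n+1}}\left(\tilde{c}^{(n)}_{0,1}\right)\triangle\tilde{c}^{(n)}_{0,1}\right)$ and $\mu\left(\phi^{(m_n+1)\alpha_{n+1}}\left(\tilde{c}^{(n)}_{0,2}\right)\triangle\tilde{c}^{(n)}_{0,2}\right)$, and these are estimated by stripe counting via \ref{a1} and \ref{a2}. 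Two points, however, need attention. First, your closing ``elementary addition'' does not produce the stated constant: with your own (two-sided, and in fact more careful) accounting the first tower contributes $\frac{2q_n}{q_{n+1}}\left(1-2\delta_n\right)^{d-1}$ and the second roughly $\frac{2(q_n-2)}{q_{n+1}}+\frac{2q_n}{q_{n+1}}$, so the total is of order $\frac{6q_n}{q_{n+1}}$, not $\frac{3q_n}{q_{n+1}}$; the factor $\left(1-2\delta_n\right)^{d-1}$ tends to $1$ and cannot absorb a factor $2$. (The paper arrives at $3$ by counting only one stripe, respectively one $\frac{1}{q_{n+1}}$-sliver, per mismatch, i.e.\ one side of each symmetric difference.) For the purposes of Proposition \ref{prop:h+1} the precise constant is irrelevant, since only a bound of the form $C\cdot\frac{q_n}{q_{n+1}}=o\left(\frac{1}{m_n}\right)$ is used, but as a verification of the inequality as stated your last step does not close.

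Second, the stripe-distinctness issue you raise is legitimate, but your justification via coprimality is not the right one: $r_n=m_n\cdot p_n\bmod q_n$ need not be coprime to $q_n$ (it may even vanish), and no such coprimality is needed. What separates the $i_1$-stripes, and keeps the image of the $i_1=q_n-2$ stripe away from the base, is the secondary offset $\frac{i_1}{2q^2_n}$ in the definition of $\tilde{c}^{(n)}_{0,j}$: two distinct indices give $x_1$-positions that differ, modulo the $\frac{1}{q_n}$-grid, by at least $\frac{1}{2q^2_n}$, while each stripe has $x_1$-width $\frac{q_n}{q_{n+1}}=\frac{1}{k_nl_nq_n}\leq\frac{1}{2q^2_n}$ by \ref{eq:leven}. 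With that correction, and with the constant adjusted (or the one-sided count of the paper adopted), your argument coincides with the paper's.
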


\begin{proof}
We note $\sigma_n|_{c^{(n)}_{i,1}} = f_n|_{{c^{(n)}_{i,1}}}$ for $i=0,...,m_n-2$ and $\sigma_n\left(c^{(n)}_{m_n-1,1}\right) = c^{(n)}_{0,1}$ as well as $\sigma_n|_{c^{(n)}_{i,2}} = T_n|_{{c^{(n)}_{i,2}}}$ for $i=0,...,m_n-1$, $\sigma_n\left(c^{(n)}_{m_n,2}\right) = c^{(n)}_{0,2}$. \\ 
To estimate the expression $\sum_{c \in \xi_n}\mu\left(T_n\left(c\right) \triangle \sigma_n\left(c\right)\right)$ we recall equation \ref{a1} and Remark \ref{remark combi}, 3. Hereby, we observe that 
\begin{equation*}
    T_n\left(c^{(n)}_{m_n-1,1}\right) = H^{-1}_n \circ \phi^{\a_{n+1}} \circ H_n \circ H^{-1}_n \circ \phi^{(m_n -1) \cdot \a_{n+1}} \circ H_n \left( H^{-1}_n \left( \tilde{c}^{(n)}_{0,1} \right) \right) = H^{-1}_n \circ \phi^{m_n \cdot \a_{n+1}}\left( \tilde{c}^{(n)}_{0,1} \right)
\end{equation*} 
and $c^{(n)}_{0,1}=H^{-1}_n \left( \tilde{c}^{(n)}_{0,1}\right)$ differ in that part of $c^{(n)}_{0,1}$ corresponding to the value $i_1=q_{n}-2$ which is not mapped back to $c^{(n)}_{0,1}$ under $T^{m_n}_n$. This discrepancy yields a measure difference of $\frac{q_n}{q_{n+1}} \cdot \left(1-2\delta_n\right)^{d-1}$. \\
Analogously we recall equation \ref{a2} stating
\begin{equation*}
    \left(m_n + 1\right) \cdot \a_{n+1} = \frac{r_n+p_n}{q_n} + \frac{1}{2q^2_n} + \frac{1}{q_{n+1}}
\end{equation*}
and observe that $T_n\left(c^{(n)}_{m_n,2}\right)$ and $c^{(n)}_{0,2}$ differ in the part of $c^{(n)}_{0,2}$ corresponding to $i_1=q_{n}-2$ which is not mapped back to $c^{(n)}_{0,2}$. Additionally, for each other allowed value of $i_1$ there is the discrepancy of $\left(m_n +1 \right) \cdot \left( \a_{n+1}- \a_n \right)$ from $\frac{1}{2q^2_n}$ with value $\frac{1}{q_{n+1}}$. Thus, we get
\begin{equation*}
\mu\left(T^{m_n+1}_n\left(c^{(n)}_{0,2}\right) \triangle c^{(n)}_{0,2}\right) \leq \frac{2 q_n}{q_{n+1}}.
\end{equation*}
This yields
\begin{equation*} 
\sum_{c \in \xi_n}\mu\left(T_n\left(c\right) \triangle \sigma_n\left(c\right)\right) \leq \frac{3 q_n}{q_{n+1}}.
\end{equation*}
\end{proof}

In the next step we consider $\sum_{c \in \xi_n}\mu\left(T\left(c\right) \triangle T_n\left(c\right)\right)$: 
\begin{lemma} \label{lem:error2}
We have 
\begin{equation*} 
\sum_{c \in \xi_n}\mu\left(T\left(c\right) \triangle T_{n}\left(c\right)\right) \leq 40d \cdot \delta_{n+1}.
\end{equation*}
\end{lemma}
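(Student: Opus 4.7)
The plan is to telescope the error through the Cauchy sequence $(T_m)_{m \geq n}$ converging to $T$, reducing the dominant single-step error $T_{n+1}$ versus $T_n$ to a measure-theoretic estimate at the tower bases $\tilde{c}^{(n)}_{0,j}$, and then to exploit the commutation identity together with the good-domain structure of Definition~\ref{dfn:goodarea}. I begin by writing
\[
\sum_{c\in\xi_n}\mu(T(c)\triangle T_n(c)) \;\le\; \sum_{m=n}^{\infty}\sum_{c\in\xi_n}\mu(T_{m+1}(c)\triangle T_m(c)).
\]
For $m \geq n+1$, the choice of $k_{m+1}$ ensures $d_\rho(T_{m+1},T_m)<\varepsilon_{m+1}<2^{-q_{m+1}}$ with $q_{m+1}$ growing super-geometrically, and each $c\in\xi_n$ is the image under the fixed smooth diffeomorphism $H_n^{-1}$ of a cuboidal set and therefore has finite perimeter; the standard boundary-layer estimate then forces the tail to be dominated by $2^{-q_{n+2}+O(1)}$, which is utterly negligible against $\delta_{n+1}=1/((n+1)q_{n+1})$. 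The entire burden therefore rests on the $m=n$ term.

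For $m=n$ I use $\alpha_{n+2}-\alpha_{n+1}=1/q_{n+2}$ together with the commutation $h_{n+1}\circ\phi^{\alpha_{n+1}}=\phi^{\alpha_{n+1}}\circ h_{n+1}$ to compute, for each $c=c^{(n)}_{i,j}$,
\[
T_{n+1}(c)=H_n^{-1}\Bigl(\phi^{(i+1)\alpha_{n+1}}\bigl(\Psi(\tilde c^{(n)}_{0,j})\bigr)\Bigr),\qquad \Psi:=h_{n+1}^{-1}\circ\phi^{1/q_{n+2}}\circ h_{n+1},
\]
while $T_n(c)=H_n^{-1}(\phi^{(i+1)\alpha_{n+1}}(\tilde c^{(n)}_{0,j}))$. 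Measure-preservation of $H_n^{-1}$ and $\phi^{(i+1)\alpha_{n+1}}$ then yields $\mu(T_{n+1}(c)\triangle T_n(c))=\mu(\Psi(\tilde c^{(n)}_{0,j})\triangle\tilde c^{(n)}_{0,j})$, \emph{independently of $i$}. The sum over $\xi_n$ collapses to $m_n\mu(\Psi(\tilde c^{(n)}_{0,1})\triangle\tilde c^{(n)}_{0,1})+(m_n+1)\mu(\Psi(\tilde c^{(n)}_{0,2})\triangle\tilde c^{(n)}_{0,2})$. Since $(2m_n+1)\mu(\tilde c^{(n)}_{0,j})\approx 1$ by Remark~\ref{remark measure base}, it remains to prove the density bound $\mu(\Psi(\tilde c^{(n)}_{0,j})\triangle\tilde c^{(n)}_{0,j})\le Cd\delta_{n+1}\cdot\mu(\tilde c^{(n)}_{0,j})$ for some absolute constant $C \leq 20$.

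I establish this density bound by splitting $\tilde c^{(n)}_{0,j}$ according to the good domain $G_{n+1}$ of $h_{n+1}$. On the interior of each fine cuboid lying inside $G_{n+1}$, every shear factor $h^{(\mathfrak{j})}_{i,n+1}$ composing $h_{n+1}$ has a step function that is constant, so the full composition acts as a pure translation and the conjugation $\Psi$ coincides with $\phi^{1/q_{n+2}}$ up to the $\varepsilon_{n+1}$-perturbation from Proposition~\ref{proposition approximation}. The good-domain contribution to the symmetric difference is then only the boundary layer of $x_1$-width $1/q_{n+2}$ along the $x_1$-boundaries of the tower base, totalling at most $O(q_n/q_{n+2})$ per base---negligible against $\delta_{n+1}$. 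On the bad complement, whose measure is at most $2d\delta_{n+1}$, the contribution is at most twice the intersection $\mu(\tilde c^{(n)}_{0,j}\cap(\mathbb T^d\setminus G_{n+1}))$; because the $\delta_n/(2l_n^d q_n^2)$ offsets defining $\tilde c^{(n)}_{0,j}$ are far coarser than the bad structure at level $n+1$ and carry no correlation with it, this intersection has density at most $2d\delta_{n+1}$ inside $\tilde c^{(n)}_{0,j}$, giving a contribution of at most $4d\delta_{n+1}\cdot\mu(\tilde c^{(n)}_{0,j})$. Collecting constants with slack for the finitely many book-keeping factors yields the claimed $40d\delta_{n+1}$.

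The hard part is the good-domain step, specifically verifying that $\Psi$ really equals $\phi^{1/q_{n+2}}$ on each interior fine cuboid rather than some amplified or coordinate-swapping shift. This rests on two facts: first, each composing factor $h^{(\mathfrak{j})}_{i,n+1}$ is a shear whose step function is locally constant, so a finite composition of shears is locally a translation and conjugation of $\phi^{1/q_{n+2}}$ by a translation returns $\phi^{1/q_{n+2}}$ itself; second, the shift $1/q_{n+2}$ is much smaller than the cuboid $x_1$-width $1/(2l_{n+1}^d q_{n+1}^2)$, which is guaranteed by the sufficient-largeness choice of $k_{n+1}$ in Subsection~\ref{subsection abc method}. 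Equally delicate is the bookkeeping at the boundaries of the tower base: the safety margins $\delta_n/(2l_n^d q_n^2)$ built into Section~\ref{subsection:tower1} were designed precisely so that the cuboid pieces of $\tilde c^{(n)}_{0,j}$ sit deep inside the good domain at every subsequent level and the bad-set intersection is never amplified beyond the anticipated density $2d\delta_{n+1}$.
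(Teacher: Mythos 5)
Your treatment of the dominant step ($T_{n+1}$ versus $T_n$) is essentially the paper's own argument: your identity $\mu(T_{n+1}(c)\triangle T_n(c))=\mu\bigl(\Psi(\tilde c^{(n)}_{0,j})\triangle\tilde c^{(n)}_{0,j}\bigr)$ is the same commutation reduction the paper performs (there phrased as $\mu\bigl(\phi^{\alpha_{n+2}}\circ h_{n+1}(\bar c)\triangle\phi^{\alpha_{n+1}}\circ h_{n+1}(\bar c)\bigr)$, which is the same quantity), with the pleasant extra remark that it is independent of the level index $i$; after that, both proofs split along the good domain of $h_{n+1}$, bound the bad part by twice its measure (density about $4d\delta_{n+1}$), bound the good part by the $\varepsilon_{n+1}$-approximation error plus the tiny shift $1/q_{n+2}$, and sum over the $2m_n+1$ levels using $(2m_n+1)\mu(\tilde c^{(n)}_{0,j})\approx 1$, closing the constants in the same way.

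The genuine gap is your treatment of the tail $m\ge n+1$. The paper does not dismiss these terms as negligible: it estimates each difference $T_{m+1}$ versus $T_m$ by the same good-domain argument at stage $m+1$ (restricting to $c\cap\bigcap_j H^{-1}_{n+j}(G_{n+j+1})$), obtaining $20d\cdot\delta_{m+1}$ per step and summing $\sum_{k\ge n+1}20d\,\delta_k\le 40d\cdot\delta_{n+1}$. Your replacement --- that $d_\rho(T_{m+1},T_m)<\varepsilon_{m+1}<2^{-q_{m+1}}$ together with ``finite perimeter'' of $c$ gives a boundary-layer bound of order $2^{-q_{n+2}+O(1)}$ --- does not follow from the construction as stated. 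The $C^0$-closeness of $T_{m+1}$ and $T_m$ controls $\mu(T_{m+1}(c)\triangle T_m(c))$ only through the volume of the $\varepsilon_{m+1}$-collar of the boundary of $T_m(c)$, and that volume involves the perimeter of $T_m(c)$, hence the derivatives of $h_{n+1},\dots,h_m$ and of $H_m^{\pm 1}$. These derivatives are enormous (they come from real-analytic approximations of step functions), and nothing in the stated choices of $\varepsilon_{m+1}$, $\delta_{m+1}$, $l_{m+1}$ or $k_m$ ties them to $2^{-q_{m+1}}$: the only stated role of $k_{m+1}$ is to make $d_\rho(T_{m+1},T_m)<\varepsilon_{m+1}$. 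So your tail estimate needs an additional largeness requirement on the $k_m$ (e.g.\ that $\varepsilon_{m+1}$ beats the collar volumes of all previously defined tower images), which is not part of the construction you invoke; alternatively, estimate the tail as the paper does. A smaller instance of the same implicit ``choose $k$ large enough'' slack occurs in your good-domain step, where you need $1/q_{n+2}$ to be much smaller than the fine-cuboid $x_1$-width $1/(2l_{n+1}^dq_{n+1}^2)$ (i.e.\ $k_{n+1}\gg l_{n+1}^{d-1}$) and where closeness of $h_{n+1}^{-1}$, not only of $h_{n+1}$, to the block-slide model must be used; these points are minor and fixable, but the tail argument as written is a real gap.
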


\begin{proof}
First of all we examine $\sum_{c \in \xi_n}\mu\left(T_{n+1}\left(c\right) \triangle T_n\left(c\right)\right)$. We compute for every $c\in \xi_n$ using the notation $\bar{c}\coloneqq H_{n}\left(c\right)=\phi^{i \cdot \a_{n+1}}\left(\tilde{c}^{(n)}_{0,j}\right)$:
\begin{align*}
\mu\left(T_{n+1}\left(c\right) \triangle T_{n}\left(c\right)\right) & = \mu\left(H^{-1}_{n+1} \circ \phi^{\alpha_{n+2}} \circ h_{n+1}\left(\bar{c}\right) \triangle H^{-1}_{n} \circ \phi^{\alpha_{n+1}}\left(\bar{c}\right)\right) \\
& = \mu\left(H^{-1}_{n+1} \circ \phi^{\alpha_{n+2}} \circ h_{n+1}\left(\bar{c}\right) \triangle H^{-1}_{n} \circ h^{-1}_{n+1} \circ \phi^{\alpha_{n+1}} \circ h_{n+1} \left(\bar{c}\right)\right) \\
& = \mu\left( \phi^{\alpha_{n+2}} \circ h_{n+1}\left(\bar{c}\right) \triangle \phi^{\alpha_{n+1}}\circ h_{n+1}\left(\bar{c}\right)\right).
\end{align*}

Since we have no control on $h_{n+1}\left(\bar{c}\right)$ for these areas of $\bar{c}$ that do not belong to the ``good domain'' of the map $h_{n+1}$, they will be part of the measure difference in our estimates. On the other hand, for the part of $\bar{c}$ belonging to the ``good domain'' of the map $h_{n+1}$ the difference is caused by the deviation $\left|\alpha_{n+2}-\alpha_{n+1}\right|$ and by the approximation error between the block slide type of map and its real-analytic approximation. By Definition \ref{dfn:goodarea} the ``good domain'' of the map $h_{n+1}$ on a block of the form
\begin{equation*}
    \Delta_{i_1, \dots, i_d} = \left[\frac{i_1}{2l^d_{n+1} q^2_{n+1}}, \frac{i_1 + 1}{2l^d_{n+1} q^2_{n+1}} \right] \times \left[\frac{i_2}{2l_{n+1}}, \frac{i_2+1}{2l_{n+1}} \right] \times \left[\frac{i_3}{l_{n+1}}, \frac{i_3 + 1 }{l_{n+1}} \right] \times \dots \times \left[\frac{i_d}{l_{n+1}}, \frac{i_d + 1}{l_{n+1}} \right]
\end{equation*}
is
\begin{equation*}
    \left[\frac{i_1 + \delta_{n+1}}{2l^d_{n+1} q^2_{n+1}}, \frac{i_1 + 1- \delta_{n+1}}{2l^d_{n+1} q^2_{n+1}} \right] \times \left[\frac{i_2+\delta_{n+1}}{2l_{n+1}}, \frac{i_2+1-\delta_{n+1}}{2l_{n+1}} \right] 
    \times \prod^d_{j=3}\left[\frac{i_j+\delta_{n+1}}{l_{n+1}}, \frac{i_j + 1 - \delta_{n+1}}{l_{n+1}} \right].
\end{equation*}
So its measure is 
\begin{equation*}
    \frac{\left(1-2\delta_{n+1}\right)^{d}}{4 \cdot l^{2d-1}_{n+1} \cdot q^2_{n+1}} \geq \frac{\left(1-2d \cdot \delta_{n+1}\right)}{4 \cdot l^{2d-1}_{n+1} \cdot q^2_{n+1}}.
\end{equation*}
By our choice of $\varepsilon_{n+1}$ the approximation error is at most $\frac{\delta_{n+1}}{4 l^{d}_{n+1}\cdot q^2_{n+1}}$ in each coordinate $x_2, \dots, x_d$. It follows that the measure difference of $\phi^{\alpha_{n+2}} \circ h_{n+1}\left(\bar{c}\right)$ and $\phi^{\alpha_{n+1}}\circ h_{n+1}\left(\bar{c}\right)$ on a set of the form $\Delta_{i_1, \dots,i_d}$ is at most
\begin{equation*}
2 \cdot \left(\frac{2 d \cdot \delta_{n+1}}{4 \cdot l^{2d-1}_{n+1} \cdot q^2_{n+1}} + \left|\alpha_{n+2}-\alpha_{n+1}\right| \cdot \frac{\left(1-2d \cdot \delta_{n+1}\right)}{4 \cdot l^{2d-1}_{n+1} \cdot q^2_{n+1}} + \frac{(d-1) \cdot \delta_{n+1}}{4 l^{3d-2}_{n+1} \cdot q^4_{n+1}} \right)
\leq \frac{3 d \cdot \delta_{n+1}}{l^{2d-1}_{n+1} \cdot q^2_{n+1}}.
\end{equation*}
Moreover, we recall that $\tilde{c}^{(n)}_{0,j}$ consists of at most $4 \cdot q^2_n \cdot l^{2d-1}_{n+1} \cdot q_{n+1}$ such blocks $\Delta_{i_1, \dots, i_d}$. Then we conclude
\begin{equation*}
\mu\left(T_{n+1}\left(c\right) \triangle T_n\left(c\right)\right) \leq 4 \cdot q^2_n \cdot l^{2d-1}_{n+1} \cdot q_{n+1} \cdot \frac{3 d \cdot \delta_{n+1}}{l^{2d-1}_{n+1} \cdot q^2_{n+1}} = q^2_n \cdot \frac{12 d \cdot \delta_{n+1}}{q_{n+1}}
\end{equation*} 
Each of the $\left(2m_n +1\right)$ elements $c \in \xi_n$ contributes and so we obtain
\begin{equation*}
\sum_{c \in \xi_n}\mu\left(f_{n+2}\left(c\right) \triangle f_{n+1}\left(c\right)\right) \leq 20d \cdot \delta_{n+1}.
\end{equation*}
Analogously estimating the other summands we observe
\begin{align*}
& \sum_{c \in \xi_n}\mu\left(T\left(c\right) \triangle T_{n}\left(c\right)\right) \\
\leq & \sum^{\infty}_{k=0}\sum_{c \in \xi_n} \mu\left(T_{n+k+1} \left(c \cap \bigcap^{k-1}_{j=0} H^{-1}_{n+j}\left(G_{n+j+1} \right)\right) \triangle T_{n+k} \left(c \cap \bigcap^{k-1}_{j=0} H^{-1}_{n+j}\left(G_{n+j+1} \right) \right) \right) \\
\leq & \sum^{\infty}_{k=n+1}20d \delta_{k}\leq 40d \cdot \delta_{n+1}.
\end{align*}
\end{proof}

\begin{proof}[Proof of Proposition \ref{prop:h+1}]
Using equation \ref{eq:firstofall} and the precedent three lemmas we conclude
\begin{equation*}
\sum_{c \in \xi_n} \mu\left(T\left(c\right) \triangle \sigma_n\left(c\right)\right) \leq \frac{3 \cdot q_n}{q_{n+1}} +  40d \cdot \delta_{n+1}.
\end{equation*}

In order to prove that this speed of approximation is of order $o\left(\frac{1}{m_n}\right)$ we compute
\begin{equation*}
\frac{\frac{3 \cdot q_n}{q_{n+1}} + 40d \cdot \delta_{n+1}}{\frac{1}{m_n}} = \frac{q_{n+1}}{2q^2_n} \cdot \left(\frac{3 \cdot q_n}{q_{n+1}} + \frac{40d}{(n+1) \cdot q_{n+1}} \right) 
= \frac{3}{2 \cdot q_n}+\frac{20d}{(n+1) \cdot q^2_n}
\end{equation*}
by our choice of $\delta_{n+1}=\frac{1}{(n+1) \cdot q_{n+1}}$. Since this converges to $0$ as $n\rightarrow \infty$, we have a good linked approximation of type $\left(h,h+1\right)$.
\end{proof}

\section{Proof of good cyclic approximation}

\subsection{Tower for good cyclic approximation}
Let $\tilde{d}^{(n)}_{0}$ be the set
\begin{equation*}
    \bigcup \left[ \frac{\delta_n}{2l^d_n q^2_n}, \frac{\delta_n}{2l^d_n q^2_n}+\frac{1}{q_{n+1}} \right] \times \left[ \frac{i_2+\delta_n}{2l_n}, \frac{i_2+1-\delta_n}{2l_n} \right] \times \prod^d_{j=3} \left[ \frac{i_j+\delta_n}{l_n}, \frac{i_j+1-\delta_n}{l_n} \right] ,
\end{equation*}
where the union is taken over $i_2 \in \N$, $0 \leq i_2 < 2l_n$, and $i_j \in \N$, $0 \leq i_j < l_n$, for $j=3, \dots, d$. With this we define $d^{(n)}_{0} \coloneqq H^{-1}_n \left(\tilde{d}^{(n)}_{0} \right)$ and the tower levels by
\begin{equation*}
d^{(n)}_i \coloneqq T^{i}_{n}\left(d^{(n)}_{0}\right) \ \ \text{ for } i=0,...,q_{n+1}-1.
\end{equation*}

Recall $\alpha_{n+1} = \frac{p_{n+1}}{q_{n+1}}$, where $p_{n+1}$ and $q_{n+1}$ are relatively prime. Hence, the tower levels are disjoint sets of equal measure not less than $\frac{\left(1-2\delta_n\right)^{d-1}}{q_{n+1}}$. Moreover, the associated cyclic permutation $\tilde{\sigma}_n$ is given by the description $\tilde{\sigma}_n |_{d^{(n)}_i} = T_n |_{d^{(n)}_i}$ for $i= 0,...,q_{n+1}-2$ and $\tilde{\sigma}_n\left(d^{(n)}_{q_{n+1}-1}\right)=d^{(n)}_0$. Since $T^{q_{n+1}}_n = \text{id}$ we also have $\tilde{\sigma}_n |_{d^{(n)}_{q_{n+1}-1}} = T_n |_{d^{(n)}_{q_{n+1}-1}}$. \\
In order to see that this provides a cyclic approximation of the constructed map $T$, we show that the partial partition $\Gamma_n \coloneqq \left\{d^{(n)}_i \; : \; i=0,...,q_{n+1}-1\right\}$ converges to the decomposition into points. 
\begin{lemma}
We have $\Gamma_n \rightarrow \varepsilon$ as $n \rightarrow \infty$.
\end{lemma}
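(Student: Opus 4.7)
The proof plan follows the template of the two preceding lemmas on $\xi_n$ and $\eta_n$, with the main new ingredient being that here the index $i$ ranges over the full period $\{0,1,\ldots,q_{n+1}-1\}$ rather than only up to $m_n$. My strategy will be to let $\tilde{\Gamma}_n \coloneqq \{d \in \Gamma_n : \mathrm{diam}(d) < 1/n\}$ and show that $\mu\!\left(\bigcup_{d \in \tilde\Gamma_n} d\right) \to 1$, from which the desired convergence $\Gamma_n\to\varepsilon$ follows exactly as in the proof of the analogous statement for $\xi_n$.

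First, I would write each level as $d^{(n)}_i = H^{-1}_{n-1}\circ h^{-1}_n\circ\phi^{i\cdot\alpha_{n+1}}(\tilde{d}^{(n)}_0)$, so that $d^{(n)}_i$ has small diameter whenever $\phi^{i\cdot\alpha_{n+1}}(\tilde{d}^{(n)}_0)$ lies inside the ``good domain'' $G_n$ of $h^{-1}_n$ (Definition~\ref{dfn:goodarea}). On $G_n$ the real-analytic approximation $h^{-1}_n$ deviates from the underlying block-slide map by at most $\delta_n/(4 l^d_n q^2_n)$ in each coordinate, so the image $h^{-1}_n\circ\phi^{i\cdot\alpha_{n+1}}(\tilde{d}^{(n)}_0)$ is contained in a single cuboid of $x_1$-length $\tfrac{1}{l_nq_n}$ and length $\tfrac{1}{l_n}$ in each of the remaining coordinates. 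Hypothesis~\eqref{eq:l} on $l_n$ then gives $\mathrm{diam}(d^{(n)}_i) < 1/n^2 < 1/n$.

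Next, I would count the good indices. By the $\phi^{1/q_n}$-equivariance of $h_n$ and the $1/q_n$-periodicity of $G_n$, only the shift modulo $1/q_n$ matters; writing $i\cdot\alpha_{n+1} = ip_n/q_n + i/q_{n+1}$, the first summand is an integer multiple of $1/q_n$ and the effective displacement is $\Delta_i \coloneqq i/q_{n+1} \pmod{1/q_n}$. Since $q_n$ divides $q_{n+1}$, as $i$ runs over $\{0,\ldots,q_{n+1}-1\}$ the value $\Delta_i$ attains each of the $q_{n+1}/q_n$ points $j/q_{n+1}$, $0 \leq j < q_{n+1}/q_n$, exactly $q_n$ times. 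The $x_1$-interval of $\tilde d^{(n)}_0$ has length $1/q_{n+1}$, and after the shift by $\Delta_i$ it must land inside one of the $2l^d_n q_n$ good subintervals of $[0,1/q_n)$, each of length $(1-2\delta_n)/(2l^d_n q^2_n)$. Provided $k_n$ is chosen large enough in step~(5) of the AbC construction so that $1/q_{n+1} \leq (1-2\delta_n)/(2l^d_n q^2_n)$, a direct length count shows that the fraction of admissible $i$ is at least $1-2\delta_n - 2l^{d-1}_n/k_n$.

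Finally, each tower level has measure $(1-2\delta_n)^{d-1}/q_{n+1}$, so these two estimates combine to give
\[
\mu\!\left(\bigcup_{d \in \tilde\Gamma_n} d\right) \geq \left(1 - 2\delta_n - \tfrac{2l^{d-1}_n}{k_n}\right)(1-2\delta_n)^{d-1},
\]
which converges to $1$ as $n\to\infty$. The main obstacle is really only the bookkeeping in the counting step: one must exploit the $\phi^{1/q_n}$-periodicity to reduce the relevant displacements to the manageable arithmetic progression $\{j/q_{n+1}\}$, and then ensure $k_n$ is large enough that the $x_1$-width $1/q_{n+1}$ of the base fits inside a single good subinterval of width $\approx 1/(2l^d_n q^2_n)$. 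The latter requirement, $k_n \gtrsim l_n^{d-1}$, is automatic from the freedom built into step~(5) of the inductive construction, and it simultaneously makes the error term $2l^{d-1}_n/k_n$ negligible.
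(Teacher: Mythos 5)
Your proposal is correct and follows essentially the same route as the paper: reduce to showing that the levels of small diameter (those $i$ for which $\phi^{i\cdot\alpha_{n+1}}(\tilde{d}^{(n)}_0)$ lies in the ``good domain'' of $h^{-1}_n$) carry measure tending to $1$, and count the admissible iterates using the structure of the orbit of $\phi^{\alpha_{n+1}}$. The only difference is that you make the counting explicit, picking up the boundary-effect term $2l_n^{d-1}/k_n$ and the corresponding largeness requirement on $k_n$, whereas the paper subsumes this in its equidistribution statement yielding at least $(1-2\delta_n)\cdot q_{n+1}$ good iterates; this is a slightly more careful bookkeeping of the same argument, not a different approach.
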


\begin{proof}
It suffices to show that the partial partition $\tilde{\Gamma}_n \coloneqq \left\{ d^{(n)}_i \in\Gamma_n \;:\;\text{diam}\left(d^{(n)}_i\right)<\frac{1}{n}\right\}$ satisfies $\lim_{n\rightarrow \infty} \mu\left(\bigcup_{d \in \tilde{\Gamma}_n} d\right) =1$. As in the previous section we have to check for how many iterates $i \in \left\{0,1,...,q_{n+1}-1\right\}$ the set $\phi^{i \cdot \alpha_{n+1}}\left(\tilde{d}^{(n)}_{0}\right)$ is contained in the ``good domain'' of the map $h^{-1}_n$ whose corresponding length on the $x_1$-axis is at least $1-2\delta_n$ by Definition \ref{dfn:goodarea}. Since $\phi^{i \cdot \alpha_{n+1}}$ is equally distributed on $\mathbb{S}^1$, there are at least $\left(1-2\delta_n\right) \cdot q_{n+1}$ such iterates $i$. Then we conclude
\begin{equation*}
\mu\left(\bigcup_{d \in \tilde{\Gamma}_n} d\right) \geq \left(1-2\delta_n\right) \cdot q_{n+1} \cdot \mu\left(d^{(n)}_0\right) \geq \left(1-2\delta_n\right)^{d},
\end{equation*}
which converges to $1$ as $n\rightarrow \infty$.
\end{proof}

\subsection{Speed of approximation}
In this subsection we show that $T$ admits a good cyclic approximation.
\begin{proposition} \label{prop:cyclic}
The constructed diffeomorphism $T \in \text{Diff}^{\omega}_{\rho} \left(\T^d, \mu \right)$ admits a good cyclic approximation.
\end{proposition}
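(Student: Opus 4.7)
The plan is to mirror the architecture of the proof of Proposition \ref{prop:h+1}, but exploiting a crucial simplification: the top level of the cyclic tower is mapped back to the base by $T_n$ itself, not merely by $\tilde{\sigma}_n$. Concretely, I would first write
\begin{equation*}
\sum_{d \in \Gamma_n} \mu\left(T(d) \triangle \tilde{\sigma}_n(d)\right) \leq \sum_{d \in \Gamma_n}\mu\left(T(d) \triangle T_n(d)\right) + \sum_{d \in \Gamma_n}\mu\left(T_n(d) \triangle \tilde{\sigma}_n(d)\right),
\end{equation*}
and observe that the second sum vanishes identically. Indeed, since $\alpha_{n+1} = p_{n+1}/q_{n+1}$ in lowest terms one has $T_n^{q_{n+1}} = \mathrm{id}$, so $T_n(d^{(n)}_{q_{n+1}-1}) = d^{(n)}_0 = \tilde{\sigma}_n(d^{(n)}_{q_{n+1}-1})$; on the remaining levels $\tilde{\sigma}_n$ was defined to coincide with $T_n$.

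Thus the entire estimate reduces to bounding $\sum_{d \in \Gamma_n}\mu(T(d) \triangle T_n(d))$, which I would do by replaying the telescoping argument of Lemma \ref{lem:error2}. For each level $d^{(n)}_i = H_n^{-1}\circ \phi^{i\alpha_{n+1}}(\tilde{d}^{(n)}_0)$, comparing $T_{n+1}$ with $T_n$ on $d^{(n)}_i$ reduces, after cancelling $H_n^{-1}$ and $h_{n+1}$ (using the fact that $h_{n+1}$ commutes with $\phi^{\alpha_n}$), to comparing $\phi^{\alpha_{n+2}}$ with $\phi^{\alpha_{n+1}}$ on $h_{n+1}(\phi^{i\alpha_{n+1}}\tilde{d}^{(n)}_0)$. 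On blocks $\Delta_{i_1,\ldots,i_d}$ entirely inside the good domain $G_{n+1}$ this discrepancy is at most $3d \cdot \delta_{n+1}/(l_{n+1}^{2d-1} q_{n+1}^2)$ per block, exactly as in Lemma \ref{lem:error2}. The number of blocks meeting $\tilde{d}^{(n)}_0$ is $\mu(\tilde{d}^{(n)}_0)$ divided by the block measure, i.e. at most $(1/q_{n+1}) \cdot 4 l_{n+1}^{2d-1} q_{n+1}^2 = 4 l_{n+1}^{2d-1} q_{n+1}$, giving $\mu(T_{n+1}(d^{(n)}_i)\triangle T_n(d^{(n)}_i)) \leq 12 d\cdot \delta_{n+1}/q_{n+1}$. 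Summing over the $q_{n+1}$ levels and then telescoping through $T_{n+k+1}$ vs.\ $T_{n+k}$ as in Lemma \ref{lem:error2} yields a bound of the form $C d \cdot \delta_{n+1}$ for an absolute constant $C$.

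Finally, to verify the \emph{good} qualifier I would check
\begin{equation*}
q_{n+1} \cdot \sum_{d \in \Gamma_n}\mu\left(T(d) \triangle \tilde{\sigma}_n(d)\right) \leq C d \cdot q_{n+1} \cdot \delta_{n+1} = \frac{C d}{n+1},
\end{equation*}
using $\delta_{n+1} = 1/((n+1)q_{n+1})$, which tends to $0$ as $n \to \infty$. This is precisely the speed $o(1/q_{n+1})$ required.

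The main obstacle is really only bookkeeping: one must make sure that the identification of the good domain, the block-counting, and the cancellation $T_n^{q_{n+1}} = \mathrm{id}$ are all compatible with how $\tilde{d}^{(n)}_0$ is placed (in particular, that it sits in $G_n$ and its iterates under $\phi^{i\alpha_{n+1}}$ that remain in $G_{n+1}$ suffice to carry the argument). No new analytic input beyond that of Lemma \ref{lem:error2} is needed; the cyclic case is in fact cleaner because the second sum is exactly zero rather than merely small.
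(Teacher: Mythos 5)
Your proposal is correct and follows essentially the same route as the paper: the paper likewise observes that $\tilde{\sigma}_n$ coincides with $T_n$ on every level (including the top, via $T_n^{q_{n+1}}=\mathrm{id}$), so the error reduces to $\sum_{d\in\Gamma_n}\mu\left(T(d)\triangle T_n(d)\right)$, which it bounds by the same block-count in the good domain and telescoping as in Lemma \ref{lem:error2}, obtaining $20d\cdot\delta_{n+1}$ and hence speed $20d/(n+1)\cdot q_{n+1}^{-1}=o(1/q_{n+1})$. Your constants and block estimates match the paper's, so no further changes are needed.
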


As observed in the previous subsection $\tilde{\sigma}_n |_d = T_n |_d$ for every $d \in \Gamma_n$. Thus, for the speed of approximation it holds:
\begin{equation} \label{eq:start}
\sum_{d \in \Gamma_n} \mu\left(T\left(d\right) \triangle \tilde{\sigma}_n\left(d\right)\right) = \sum_{d \in \Gamma_n}\mu\left(T\left(d\right) \triangle T_n\left(d\right)\right).
\end{equation}

\begin{lemma} \label{lem:errorB1}
We have
\begin{equation*} 
\sum_{d \in \Gamma_n}\mu\left(T\left(d\right) \triangle T_n\left(d\right)\right) \leq 20d \cdot \delta_{n+1}.
\end{equation*}
\end{lemma}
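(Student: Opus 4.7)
The plan is to follow the template of Lemma \ref{lem:error2} almost verbatim, with only a single bookkeeping change in the combinatorial count. Starting from equation \ref{eq:start}, I would telescope through the convergent sequence $T = \lim_{m \to \infty} T_{n+m}$ to write
\begin{equation*}
\mu(T(d) \triangle T_n(d)) \leq \sum_{k=0}^{\infty} \mu\bigl(T_{n+k+1}(d_k) \triangle T_{n+k}(d_k)\bigr),
\end{equation*}
where $d_k := d \cap \bigcap_{j=0}^{k-1} H_{n+j}^{-1}(G_{n+j+1})$ is the restriction of $d$ to the nested good domains, exactly as in the concluding step of Lemma \ref{lem:error2}. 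After this reduction, it suffices to estimate the $k=0$ term.

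For the level estimate, fix $d = T_n^i(d_0^{(n)}) \in \Gamma_n$ with $0 \le i < q_{n+1}$ and set $\bar{d} = H_n(d) = \phi^{i \alpha_{n+1}}(\tilde{d}_0^{(n)})$. Using the commutation $h_{n+1} \circ \phi^{\alpha_{n+1}} = \phi^{\alpha_{n+1}} \circ h_{n+1}$, the same algebraic manipulation as in Lemma \ref{lem:error2} yields
\begin{equation*}
\mu(T_{n+1}(d) \triangle T_n(d)) = \mu\bigl(\phi^{\alpha_{n+2}} h_{n+1}(\bar{d}) \triangle \phi^{\alpha_{n+1}} h_{n+1}(\bar{d})\bigr),
\end{equation*}
which I would estimate block-by-block on the partition $\{\Delta_{i_1,\ldots,i_d}\}$ at scale $n+1$. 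On each block meeting the good domain $G_{n+1}$ of Definition \ref{dfn:goodarea} the local symmetric difference is at most $\frac{3d\,\delta_{n+1}}{l_{n+1}^{2d-1} q_{n+1}^2}$, combining the rotation discrepancy $|\alpha_{n+2}-\alpha_{n+1}|$ with the $\frac{\delta_{n+1}}{4 l_{n+1}^d q_{n+1}^2}$ coordinatewise approximation error, exactly as computed there.

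The only genuine change from Lemma \ref{lem:error2} is the count of blocks inside the base: since $\tilde{d}_0^{(n)}$ carries no union over $i_1$, its $x_1$-extent is only $1/q_{n+1}$ rather than $q_n^2/q_{n+1}$, and it consequently meets about $4 l_{n+1}^{2d-1} q_{n+1}$ blocks $\Delta_{i_1,\ldots,i_d}$ instead of $4 q_n^2 l_{n+1}^{2d-1} q_{n+1}$. This gives the per-element estimate $\mu(T_{n+1}(d) \triangle T_n(d)) \le 12 d\,\delta_{n+1}/q_{n+1}$. The factor $q_n^2$ lost here is recovered precisely by $|\Gamma_n| = q_{n+1}$, which exceeds $2m_n+1$ by essentially $q_n^2$; summing over $\Gamma_n$ therefore produces again $12 d\,\delta_{n+1}$ at this stage. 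The telescoped tail contributes a bounded geometric factor—well under $2$ since $q_{n+k+2} \ge q_{n+k+1}^2$ forces very rapid decay of $\delta_{n+k+1}$—yielding the claimed bound $20 d \cdot \delta_{n+1}$. I do not foresee a real obstacle; the only subtlety is threading the nested good domains through each level of the telescope so that the block-slide description of every $h_{n+k+1}$ remains valid on $d_k$, just as in Lemma \ref{lem:error2}.
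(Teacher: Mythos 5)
Your proposal is correct and follows essentially the same route as the paper's proof: the same reduction of $\mu(T_{n+1}(d)\triangle T_n(d))$ to $\mu\bigl(\phi^{\alpha_{n+2}}h_{n+1}(\bar{d})\triangle\phi^{\alpha_{n+1}}h_{n+1}(\bar{d})\bigr)$, the same per-block bound $\frac{3d\,\delta_{n+1}}{l_{n+1}^{2d-1}q_{n+1}^{2}}$ imported from Lemma \ref{lem:error2}, the same count of $4\,l_{n+1}^{2d-1}q_{n+1}$ blocks per level (the $q_n^2$-factor difference from the $(h,h+1)$-towers being exactly offset by $|\Gamma_n|=q_{n+1}$), and the same telescoping through the nested good domains to absorb the tail into the constant $20d$.
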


\begin{proof}
In order to estimate $\mu\left(T_{n+1}\left(d\right) \Delta T_{n}\left(d\right)\right)$ we denote $\bar{d}=H_n(d)$ and argue as in the proof of Lemma \ref{lem:error2} that we have no control on $h_{n+1}\left(\bar{d}\right)$ for these areas of $\bar{d}$ that do not belong to the ``good domain'' of the map $h_{n+1}$. Hence, these will be part of the measure difference in our estimates. On the other hand, for the part of $\bar{d}$ belonging to the ``good domain'' of the map $h_{n+1}$ the difference is caused by the deviations $\left|\alpha_{n+2}-\alpha_{n+1}\right|$ as well as the approximation error between the block slide type of map and its real-analytic approximation. \\
Using the estimate on the ``good domain'' of $h_{n+1}$ on sets of the form $\Delta_{i_1, \dots, i_d}$ in Lemma \ref{lem:error2} and that $H_{n}\left(d\right)$ consists of $4 \cdot l^{2d-1}_{n+1} \cdot q_{n+1}$ many of those sets $\Delta_{i_1, \dots, i_d}$, we obtain
\begin{equation*}
\mu\left(T_{n+1}\left(d\right) \triangle T_{n}\left(d\right)\right) 
\leq 4 \cdot l^{2d-1}_{n+1} \cdot q_{n+1} \cdot \frac{3 d \cdot \delta_{n+1}}{l^{2d-1}_{n+1} \cdot q^2_{n+1}} 
\leq \frac{12d \cdot \delta_{n+1}}{q_{n+1}}.
\end{equation*} 
Each of the $q_{n+1}$ elements $d \in \Gamma_n$ contributes and so we obtain
\begin{equation*}
\sum_{d \in \Gamma_n}\mu\left(T_{n+1}\left(d\right) \triangle T_{n}\left(d\right)\right) \leq 12d \cdot \delta_{n+1}.
\end{equation*}
Analogously we estimate the other summands and obtain
\begin{align*}
& \sum_{d \in \Gamma_n}\mu\left(T\left(d\right) \triangle T_{n}\left(d\right)\right) \\
\leq & \sum^{\infty}_{k=0}\sum_{d \in \Gamma_n} \mu\left( T_{n+k+1} \left( d \cap \bigcap^{k-1}_{j=0} H^{-1}_{n+j} \left( G_{n+j+1} \right) \right) \triangle T_{n+k} \left( d \cap \bigcap^{k-1}_{j=0} H^{-1}_{n+j} \left( G_{n+j+1} \right) \right) \right)\\
\leq & \sum^{\infty}_{k=n} 12d \delta_{k+1} \leq 20d \cdot \delta_{n+1}.
\end{align*}
\end{proof}

\begin{proof}[Proof of Proposition \ref{prop:cyclic}]
Using equation \ref{eq:start} as well as Lemma \ref{lem:errorB1} we conclude
\begin{equation*}
\sum_{d \in \Gamma_n} \mu\left(T\left(d\right) \triangle \tilde{\sigma}_n\left(d\right)\right) \leq 20d \cdot \delta_{n+1}.
\end{equation*}

In order to prove that this speed of approximation is of order $o\left(\frac{1}{q_{n+1}}\right)$ we compute
\begin{equation*}
\frac{20d \cdot \delta_{n+1}}{\frac{1}{q_{n+1}}} = q_{n+1} \cdot \frac{20d}{(n+1) \cdot q_{n+1}} = \frac{20d}{n+1}
\end{equation*}
by our choice of $\delta_{n+1}$. Since this converges to $0$ as $n\rightarrow \infty$, we have a good cyclic approximation.
\end{proof}

\end{document}